\newtheorem{theorem}{Theorem}[section]
\newtheorem{corollary}{Corollary}[section]
\newtheorem{lemma}{Lemma}[section]
\newtheorem{proposition}{Proposition}[section]
\newtheorem{remark}{Remark}[section]
\newtheorem{definition}{Definition}[section]
\numberwithin{equation}{section}
\begin{document}
\title{On the power means and Lawson-Lim means for positive invertible operators}
\author{Wenshi Liao$^a$\footnote{Corresponding author. E-mail: liaowenshi@gmail.com, jlwu678@163.com, 527628144@qq.com.}, Junliang Wu$^a$ and Haisong Cao$^a$\\
$^a${\small College of Mathematics and Statistics, Chongqing University, Chongqing 401331, P.R. China}}

\date{}
\maketitle
{\bf Abstract.} This note aims to
present some reverse inequalities about the power means and Karcher mean via the Kantorovich constant and some of these have been generalized to higher power. Also, we generalize the reverse weighted arithmetic-geometric mean inequality of $n$ positive invertible operators due to Lawson and Lim. In addition, we make comparisons between the Karcher mean and Lawson-Lim geometric mean for higher power.
\vspace{3mm}

{\bf Keywords : } Power means; Karcher mean; Lawson-Lim geometric mean; Ando-Li-Mahthias geometric mean; Kantorovich constant; Reverse inequalities
\vspace{3mm}


{\bf AMS Subject Classification :} 47A30; 47A63; 47A64
\vspace{3mm}

\section{Introduction}
Let $B(\mathcal{H})$ be the $C^*$-algebra of all bounded linear operators on a complex separable Hilbert space $\mathcal{H}$. $B(\mathcal{H})^+$ stands for the set of positive elements in $B(\mathcal{H})$.  A linear map $\Phi$: $B(\mathcal{H})\rightarrow B(\mathcal{K})$ is said to be positive if $\Phi(A)\ge 0$ whenever $A\ge0$.
A positive linear map is said to be normalized (unital) if it maps $I_n$, the identity operator, to $I_m$.
Note that a positive linear map $\Phi$ is monotone in the sense that $A\le B$ implies $\Phi(A)\le \Phi(B)$.
 $\mathbb{P}$ stands for the convex cone of positive invertible operators. $\Delta_n$ denotes the simplex of positive probability vectors in $\mathbb{R}^n$ convexly spanned by the unit coordinate vectors. $\|\cdot\|$ and $|||\cdot|||$ denote the operator norm and the unitarily invariant norm, respectively. $\mathrm{tr}$ is the trace functional.

In \cite{Lim}, Lim and P\'{a}lfia have proposed power means of positive definite matrices
 and their notion and most of their results readily extended to the setting of positive invertible operators on a complex Hilbert space (See \cite{Lawson2}):
 \begin{definition}{\rm(\textbf{Power means})}
Let $\mathbb{A}=(A_1,A_2,\cdots,A_n)\in \mathbb{P}^n$ and $\omega\in \Delta_n$.
For $t\in[-1,1]$,
the power means $P_t(\omega;\mathbb{A})$ is defined as the unique
positive definite solution of the following non-linear equations:
\[
X=\sum\limits^n_{i=1}\omega_i(X\sharp_tA_i), ~for~ t\in(0,1],~~~~~
\]
\[
~~~~~~X=\sum\limits^n_{i=1}\omega_i(X^{-1}\sharp_{-t}A_i^{-1})^{-1}, ~for~ t\in[-1,0),
\]
where $A\sharp_t B :=A^{\frac{1}{2}}(A^{-\frac{1}{2}}BA^{-\frac{1}{2}})^{t} A^{\frac{1}{2}}$
is the $t$-weighted geometric mean of $A$ and $B$. $P_t(\omega;\mathbb{A})$ is called
the $\omega$-weighted power mean of order $t$ of $A_1,A_2,\cdots,A_n$.  To simplify the notation,
we write $P_t(\mathbb{A})=P_t(\frac{1}{n},\frac{1}{n},\cdots,\frac{1}{n};\mathbb{A})$.
\end{definition}

Since the power means $P_t(\omega;\mathbb{A})$ is increasing for $t\in [-1,1]\setminus \{0\}$,
 it interpolates between the weighted arithmetic mean and harmonic mean: $P_1(\omega;\mathbb{A})=\sum_{i=1}^nw_iA_i$
and $P_{-1}(\omega;\mathbb{A})=(\sum_{i=1}^nw_iA_i^{-1})^{-1}$.

Let $t\in(0,1]$ and $f:\mathbb{P}\rightarrow \mathbb{P}$ defined by $f(X)=\sum^n_{i=1}\omega_i(X\sharp_tA_i)$.
Then by the L\"{o}ewner-Heinz inequality, $f$ is monotone: $X\le Y$ implies $f(X)\le f(Y)$.
By Theorem 3.1 and Remark 3.4 of \cite{Lawson3}, $f$ is a strict contraction for the Thompson metric and
has a unique fixed point by the Banach fixed point theorem:
\[\lim_{k\rightarrow\infty}f^k(X)=P_t(\omega;\mathbb{A}), ~X\in\mathbb{P}.\]

Since the pioneering papers of Pusz and Woronowicz \cite{Pusz}, Ando \cite{Ando3}, and Kubo and Ando \cite{Kubo},
an extensive theory of two-variable geometric mean has sprung up for positive operators:
For two positive operators $A$ and $B$, the operator geometric mean is defined by
$A\sharp B :=A^{\frac{1}{2}}(A^{-\frac{1}{2}}BA^{-\frac{1}{2}})^{\frac{1}{2}} A^{\frac{1}{2}}$
for $A>0$.  Once one realizes that the matrix geometric mean $A\sharp B$ is the metric midpoint of $A$ and $B$
for the trace metric on the set of positive definite matrices of some fixed dimension (see, e.g., \cite{Bhatia, Lawson2}).
The operator geometric mean has many characterizations. For example,
\[A\sharp B=\max\left\{X\mid X=X^*,
\left[
  \begin{array}{cc}
    A  & X  \\
    X & B \\
  \end{array}
\right]\ge0
\right\}
\]
and $A\sharp B$ is the unique positive solution of the Riccati equation $XA^{-1}X = B$. Moreover, it is monotone,
jointly concave and congruence invariant and $A\sharp B=B\sharp A$.
But the $n$-variable case for $n> 2$ was a long standing problem and many authors studied the geometric mean of $n$-variable.

In 2004, Ando et al.\cite{Ando1} succeeded in the formulate of the geometric mean for $n$ positive definite matrices,
and they showed that it satisfies ten important properties:
\begin{definition}
{\rm(\textbf{Ando-Li-Mahthias geometric mean}\cite{Ando1})}
Let $A_i$ $(i=1,2,\cdots,n)$ be positive definite matrices.
Then the geometric mean $G_{ALM}(A_1,A_2,\cdots,A_n)$ is defined by induction as follows:

$~\mathrm{(i)}$ $G_{ALM}(A_1,A_2)=A_1\#A_2$.

$\mathrm{(ii)}$ Assume that the geometric mean of any $n-1$-tuple of operators is defined. Let
\[
G_{ALM}((A_j)_{j\neq i})=G_{ALM}(A_1,\cdots,A_{i-1},A_{i+1},\cdots,A_n),
\]
\indent \indent\and let sequences $\{A^{(r)}_i\}^\infty_{r=0}$ be $A^{(0)}_i=A_i$ and $A^{(r)}_i =G_{ALM}((A^{(r-1)}_j)_{j\neq i})$. If there exists
\indent \indent $\lim_{r\rightarrow \infty}A^{(r)}_i$, and it does not depend on $i$, then the geometric mean of $n$-matrices is
\indent \indent defined as
\[\lim\limits_{r\rightarrow \infty}A^{(r)}_i=G_{ALM}(A_1,A_2,\cdots,A_n).\]
\end{definition}
In [15], Yamazaki pointed out that the definition of the geometric mean by Ando, Li and
Mathias can be extended to Hilbert space operators.
Lawson and Lim \cite{Lawson2} established a definition of the weighted version of
the Ando-Li-Mahthias geometric mean for $n$ positive operators, we call it \textbf{Lawson-Lim geometric mean}. Following \cite{Lawson2},
we recall the definition of higher order weighted geometric mean $G[n,t]$ with $t\in(0,1)$ for $n$
positive operators $A_1,A_2,\cdots,A_n$. Let
$G[2,t](A_1,A_2)=A_1\sharp_t A_2 =A_1^{\frac{1}{2}}(A_1^{-\frac{1}{2}}A_2A_1^{-\frac{1}{2}})^{t} A_1^{\frac{1}{2}}$
(the unique geodesic curve containing $A$ and $B$ and its unique metric midpoint
 $A\sharp B=A\sharp_{\frac{1}{2}}B$ is the geometric mean of $A$ and $B$).
For $n\ge 3$, $G[n,t]$ is defined inductively as follows: Put $A_i^{(0)}=A_i$ for all $i=1,2,\cdots,n$ and
\[
A_i^{(r)}=G[n-1,t]\left(\left(A_j^{(r-1)}\right)_{j\neq i}\right)=G[n-1,t]\left(A_1^{(r-1)},\cdots,A_{i-1}^{(r-1)},A_{i+1}^{(r-1)},\cdots,A_n^{(r-1)}\right)
\]
inductively for $r$. Then the sequences $\{A_i^{(r)}\}$ have the same limit for all $i=1,2,\cdots,n$ in the Thompson metric.
So $G[n,t](A_1,A_2,\cdots,A_n)=\lim_{r\rightarrow\infty}A_i^{(r)}$.
In particular, $G[n,\frac{1}{2}]$ for $t=\frac{1}{2}$ is the Ando-Li-Mathias geometric mean.

Similarly, the weighted arithmetic mean is defined as follows: Put $\widetilde{A_i^{(0)}}=A_i$ for all $i=1,2,\cdots,n$ and
\[
\widetilde{A_i^{(r)}}=A[n-1,t]\left(\left(\widetilde{A_j^{(r-1)}}\right)_{j\neq i}\right)=A[n-1,t]\left(\widetilde{A_1^{(r-1)}},\cdots,\widetilde{A_{i-1}^{(r-1)}},\widetilde{A_{i+1}^{(r-1)}},\cdots,\widetilde{A_n^{(r-1)}}\right)
\]
inductively for $r$. Then the sequences $\{\widetilde{A_i^{(r)}}\}$ have the same limit for all $i=1,2,\cdots,n$.
If we put $A[n,t](A_1,A_2,\cdots,A_n)=\lim_{r\rightarrow\infty}\widetilde{A_i^{(r)}}$, then it is expressed by
\[A[n,t](A_1,A_2,\cdots,A_n)=t[n]_1A_1+t[n]_2A_2+\cdots+t[n]_nA_n,\]
where $t[n]_i\ge0$ for all $i=1,2,\cdots,n$ with $\sum_{i=1}^n t[n]_i=1$. Also, the weighted harmonic mean
$H[n,t](A_1,A_2,\cdots,A_n)$ is defined as
\[H[n,t](A_1,A_2,\cdots,A_n)=\left(t[n]_1A_1^{-1}+t[n]_2A_2^{-1}+\cdots+t[n]_nA_n^{-1}\right)^{-1}.\]
Note that the coefficient $\{t[n]_i\}$ depends on $n$ and $t$ only, see \cite{Fujii, Seo} for more details.

Moreover, the weighted arithmetic-geometric-harmonic mean inequality holds:
\begin{equation}
\label{agh}
H[n,t](A_1,A_2,\cdots,A_n)\le G[n,t](A_1,A_2,\cdots,A_n) \le A[n,t](A_1,A_2,\cdots,A_n).
\end{equation}
Since then, another approach to generalizing the geometric mean to $n$-variables, depending on Riemannian trace metric, was the Karcher mean, which was studied by many researchers, see \cite{Lawson3, Lim} and the reference therein.
Let $\mathbb{A}=A_1, A_2,\cdots,A_n\in \mathbb{P}^n$ and $\omega=(w_1,w_2,\cdots, w_n)\in\Delta_n$. By computing appropriate derivatives as in \cite{Bhatia, Moakher},
 the $\omega$-weighted \textbf{Karcher mean} of $\mathbb{A}$, denoted by $G_K(\omega;\mathbb{A})$,
coincides with the unique positive definite solution of the Karcher equation
\begin{equation}
\label{Kar}
\sum_{i=1}^nw_i\log(X^{\frac{1}{2}}A_i^{-1}X^{-\frac{1}{2}})=0.
\end{equation}
In the two operators case, $A_1, A_2\in \mathbb{P}$, the Karcher mean coincides with the weighted geometric mean $A_1\sharp_t A_2 =A_1^{\frac{1}{2}}(A_1^{-\frac{1}{2}}A_2A_1^{-\frac{1}{2}})^{t} A_1^{\frac{1}{2}}$.
From \eqref{Kar}, the Karcher mean satisfies the self-duality
$G_K(\omega;\mathbb{A})=G_K(\omega;\mathbb{A}^{-1})^{-1},$
where $\mathbb{A}^{-1}=(A^{-1}_1,A^{-1}_2,\cdots,A^{-1}_n)$.
Lim and P\'{a}lfia \cite{Lim} also established that the Karcher mean is the limit of power means as $t\rightarrow0$ in the finite-dimensional setting:
\begin{equation}
\label{Lim}
\lim_{t\rightarrow0} P_t(\omega;\mathbb{A})=G_K(\omega;\mathbb{A}),
\end{equation}
then Lawson and Lim \cite{Lawson3} showed \eqref{Lim} is valid in the infinite-dimensional setting.

\section{Preliminary}

Tominaga \cite{Tominaga} and Ali\'{c} et al.\cite{Alic} showed the following non-commutative arithmetic-geometric mean inequality:
For positive invertible operators $A_1$ and $A_2$ such that $m\le A_1, A_2\le M$ for some scalars $0<m\le M$ and $h=\frac{M}{m}$, one has
\begin{equation}\label{tom}
(1-v)A_1+vA_2\le S(h)A_1\sharp_vA_2,
\end{equation}
where $S(h)=\frac{(h-1)h^{\frac{1}{h-1}}}{e\log h}(h\neq1)$ is the Specht ratio and $S(1)=1$.
By \eqref{tom}, we can easily obtain the following one for any positive unital linear map $\Phi$:
\begin{equation}\label{tom1}
\Phi(A_1)\sharp_v\Phi(A_2)\le (1-v)\Phi(A_1)+v\Phi(A_2)=\Phi((1-v)A_1+vA_2)\le S(h)\Phi(A_1\sharp_v A_2).
\end{equation}

Fujii et al.\cite{Fujii} showed the following reverses of the weighted
arithmetic-geometric mean inequality of $n$ positive invertible operators due to Lawson and Lim:
\begin{theorem}\label{th21}
For any integer $n\ge 2$, let $A_1,A_2,\cdots,A_n$ be positive invertible operators
such that $m\le A_i\le M$ for all $i=1,2,\cdots,n$ and some scalars $0<m\le M$.
Then for each $t\in(0,1)$
\begin{equation}\label{th211}
~~A[n,t](A_1,\cdots, A_n)\le \frac{(m+M)^{2}}{4mM}G[n,t](A_1,\cdots, A_n)
\end{equation}
and
\begin{equation}\label{th212}
A[n,t](A_1,\cdots, A_n)\le S^2(h)G[n,t](A_1,\cdots, A_n).~~~
\end{equation}
\end{theorem}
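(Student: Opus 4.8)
The plan is to proceed by induction on $n$, using the two-operator reverse inequality \eqref{tom1} (and its Kantorovich-constant companion) as the base case and the inductive structure of the Lawson--Lim means as the engine. First I would record the two-operator versions: for $0<m\le A_1,A_2\le M$ with $h=M/m$, inequality \eqref{tom} gives $(1-v)A_1+vA_2\le S(h)\,A_1\sharp_v A_2$, and the analogous Kantorovich estimate gives $(1-v)A_1+vA_2\le \frac{(m+M)^2}{4mM}\,A_1\sharp_v A_2$; since $S(h)\le \frac{(m+M)^2}{4mM}$ is false in general (they are incomparable constants), the two chains \eqref{th211} and \eqref{th212} must be run in parallel but with identical combinatorial bookkeeping. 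The key point for the induction is that $A[2,t]=A_1\sharp_t A_2$ at the ``arithmetic'' level is literally the geometric mean when $n=2$, so the base case $n=2$ is exactly \eqref{tom} (respectively its Kantorovich form) with $v=t$.

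For the inductive step, assume the bound holds for $n-1$ variables with constant $C$ (where $C$ is either $\frac{(m+M)^2}{4mM}$ or $S^2(h)$). Recall the defining recursions: $\widetilde{A_i^{(r)}}=A[n-1,t]((\widetilde{A_j^{(r-1)}})_{j\neq i})$ and $A_i^{(r)}=G[n-1,t]((A_j^{(r-1)})_{j\neq i})$, with $\widetilde{A_i^{(0)}}=A_i^{(0)}=A_i$. The strategy is to prove by a secondary induction on $r$ that $\widetilde{A_i^{(r)}}\le C\, A_i^{(r)}$ for all $i$, with the crucial observation that the spectral bounds $m\le A_i^{(r)}\le M$ are preserved at every stage (both $A[n-1,t]$ and $G[n-1,t]$ are means, hence lie between $m$ and $M$ whenever their arguments do), so the constant $C$ never degrades across iterations. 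Passing $r\to\infty$ in the Thompson metric then yields $A[n,t](A_1,\dots,A_n)\le C\, G[n,t](A_1,\dots,A_n)$. Here one must be slightly careful: the inductive hypothesis controls $\widetilde{A_j^{(r-1)}}$ versus $A_j^{(r-1)}$ entrywise, and then monotonicity of $A[n-1,t]$ in its arguments gives $\widetilde{A_i^{(r)}}=A[n-1,t]((\widetilde{A_j^{(r-1)}})_j)\le A[n-1,t]((C A_j^{(r-1)})_j)=C\,A[n-1,t]((A_j^{(r-1)})_j)$, and then the $(n-1)$-variable theorem applied to the operators $A_j^{(r-1)}$ gives $A[n-1,t]((A_j^{(r-1)})_j)\le C\, G[n-1,t]((A_j^{(r-1)})_j)=C\,A_i^{(r)}$; combining, $\widetilde{A_i^{(r)}}\le C^2 A_i^{(r)}$, which is worse than wanted. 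So the naive composition loses a factor, and the real work is to avoid this.

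The main obstacle, then, is exactly this factor-squaring: one cannot simply chain ``arithmetic $\le C\cdot$geometric'' at the outer level and again at the inner $(n-1)$-variable level. The fix I would pursue is to compare $\widetilde{A_i^{(r)}}$ directly with $G[n,t]$ rather than shadowing the whole two-sequence construction: use that $A[n,t](A_1,\dots,A_n)=\sum_i t[n]_i A_i$ is an \emph{explicit} weighted arithmetic mean with fixed weights depending only on $n,t$, and that $G[n,t]$ satisfies the weighted AM--GM-type and monotonicity/congruence properties from \cite{Lawson2}. Concretely, write $A[n,t](\mathbb{A})=\sum_i t[n]_i A_i$ and bound this against $G[n,t](\mathbb{A})$ by first sandwiching $G[n,t](\mathbb{A})$ below by the weighted harmonic mean $H[n,t](\mathbb{A})$ via \eqref{agh}, then invoking the two-sided operator Kantorovich inequality $\sum_i t[n]_i A_i\le \frac{(m+M)^2}{4mM}\bigl(\sum_i t[n]_i A_i^{-1}\bigr)^{-1}$ (and the Specht-ratio analogue), which is a genuine $n$-term statement with the \emph{single} constant $\frac{(m+M)^2}{4mM}$ (resp. $S^2(h)$ via $S(h^{1/2})^2\le\cdots$, or directly the known $S^2$-type bound). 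This routes the entire argument through the known scalar-weighted operator Kantorovich/Specht inequalities and \eqref{agh}, bypassing the induction-on-$n$ degradation entirely; the only thing left to check is that the $n$-term Kantorovich and Specht constants are indeed $\frac{(m+M)^2}{4mM}$ and $S^2(h)$ respectively, which follows from the corresponding scalar results applied to the operator monotone function $\log$ (for Specht) and to $t\mapsto t^{-1}$ (for Kantorovich), together with the operator-convexity bookkeeping already packaged in \eqref{tom}--\eqref{tom1}.
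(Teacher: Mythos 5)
The paper does not actually prove Theorem \ref{th21}: it is quoted verbatim from \cite{Fujii}, so there is no in-paper argument to match your plan against, and I judge the plan on its own merits. Your final route for \eqref{th211} is sound and is essentially the standard one: write $A[n,t](\mathbb{A})=\sum_i t[n]_iA_i$, apply the $n$-term weighted operator Kantorovich (reverse AM--HM) inequality $\sum_i t[n]_iA_i\le \frac{(m+M)^2}{4mM}\bigl(\sum_i t[n]_iA_i^{-1}\bigr)^{-1}=\frac{(m+M)^2}{4mM}H[n,t](\mathbb{A})$, and finish with $H[n,t]\le G[n,t]$ from \eqref{agh}. You also correctly diagnosed and abandoned the naive induction that shadows the Ando--Li--Mathias recursion, which does square the constant at each level; that self-correction is the right call, and the harmonic-mean detour is exactly how one avoids it (it is also the mechanism the present paper reuses in its Theorem 3.3).

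Two genuine errors remain. First, your claim that $S(h)$ and $\frac{(m+M)^2}{4mM}$ are ``incomparable'' is false: the paper's own display \eqref{SK} records $S(h)\le\frac{(m+M)^2}{4mM}\le S^2(h)$. Because you missed this, your treatment of \eqref{th212} is both overcomplicated and unsubstantiated --- the appeal to a ``Specht-ratio analogue'' of the AM--HM reverse obtained ``from the operator monotone function $\log$'' is not an argument as written. The repair is immediate: \eqref{th212} follows from \eqref{th211} and the right-hand half of \eqref{SK}, with no further work. Second, the assertion in your base-case discussion that $A[2,t]=A_1\sharp_t A_2$ is wrong ($A[2,t](A_1,A_2)=(1-t)A_1+tA_2$ is the arithmetic mean, while $G[2,t]=A_1\sharp_tA_2$); this is harmless only because you discard the induction, but it should not stand. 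With \eqref{th212} rerouted through \eqref{SK} and the stray misstatements removed, the proof is complete; the only lemma you still owe is the $n$-term weighted Kantorovich inequality itself, which follows from the Kantorovich inequality for the positive unital map $\mathrm{diag}(X_1,\dots,X_n)\mapsto\sum_i t[n]_iX_i$ on the direct sum, using $m\le \mathrm{diag}(A_1,\dots,A_n)\le M$.
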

Fujii \cite{Fujii1} and Lin \cite{Lin1} state a relation between the Specht ratio and the Kantorovich constant severally:
For $0<m\le M$ and $h=\frac{M}{m}$,
\begin{equation}\label{SK}
S(h)\le \frac{(m+M)^{2}}{4mM}\le S^2(h).
\end{equation}

It is obvious that \eqref{th211} is tighter than \eqref{th212} via the inequality \eqref{SK}.

Seo \cite{Seo} presented the following inequalities for the $n$-variables in terms of the higher order
weighted geometric mean due to Lawson and Lim, which are an extension and a converse of Ando's inequality
 $\Phi(A\sharp_t B)\le \Phi(A)\sharp_t \Phi(B)$
(See \cite[Theorem 4.1.5]{Bhatia} and \cite{Ando2}) for a positive unital linear map $\Phi$ and $t\in[0,1]$, respectively:
\begin{theorem}\label{th22}
Let $\Phi$ be a positive unital linear map on $B(\mathcal{H})$ and
let $A_1, A_2, \cdots, A_n \in \mathbb{P}^n$ for any positive integer $n\ge 2$ on a Hilbert space $\mathcal{H}$ such that $m\le A_i\le M$ for $i=1,2,\cdots,n$
and some scalars $0<m<M$.
Then for each $t\in(0,1)$,
\[
\Phi(G[n,t](A_1,\cdots, A_n))\le G[n,t](\Phi(A_1),\cdots, \Phi(A_n))\le \frac{(m+M)^{2}}{4mM}\Phi(G[n,t](A_1,\cdots, A_n)).
\]
\end{theorem}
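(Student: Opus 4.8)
The plan is to establish the two inequalities separately. The left one is an $n$-variable upgrade of Ando's inequality and I would prove it by a nested induction; the right one follows quickly by sandwiching $G[n,t]$ between the weighted arithmetic and harmonic means of \eqref{agh} and invoking Theorem~\ref{th21}.

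For the left inequality $\Phi(G[n,t](A_1,\dots,A_n))\le G[n,t](\Phi(A_1),\dots,\Phi(A_n))$, I argue by induction on $n$. The case $n=2$ is precisely Ando's inequality $\Phi(A\sharp_t B)\le\Phi(A)\sharp_t\Phi(B)$ (see \cite[Theorem 4.1.5]{Bhatia}, \cite{Ando2}), and there is no need for the bounds $m,M$ here. Assume the left inequality holds for all $(n-1)$-tuples of positive invertible operators and all positive unital $\Phi$. Let $A_i^{(0)}=A_i$, $A_i^{(r)}=G[n-1,t]((A_j^{(r-1)})_{j\ne i})$ be the Lawson--Lim iteration, and let $B_i^{(0)}=\Phi(A_i)$, $B_i^{(r)}=G[n-1,t]((B_j^{(r-1)})_{j\ne i})$ be the corresponding iteration for the images. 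I claim $\Phi(A_i^{(r)})\le B_i^{(r)}$ for every $r$ and every $i$, proved by a second induction on $r$: the case $r=0$ is an equality, and for the step the outer hypothesis gives $\Phi(A_i^{(r)})=\Phi\big(G[n-1,t]((A_j^{(r-1)})_{j\ne i})\big)\le G[n-1,t]((\Phi(A_j^{(r-1)}))_{j\ne i})$, after which monotonicity of the Lawson--Lim mean $G[n-1,t]$ (one of its established properties, \cite{Ando1,Lawson2}) together with $\Phi(A_j^{(r-1)})\le B_j^{(r-1)}$ yields $G[n-1,t]((\Phi(A_j^{(r-1)}))_{j\ne i})\le G[n-1,t]((B_j^{(r-1)})_{j\ne i})=B_i^{(r)}$. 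Letting $r\to\infty$, using that a positive linear map on $B(\mathcal H)$ is bounded hence norm-continuous and that both iterations converge in the Thompson metric (hence in norm), gives $\Phi(G[n,t](\mathbb A))=\lim_r\Phi(A_i^{(r)})\le\lim_r B_i^{(r)}=G[n,t](\Phi(A_1),\dots,\Phi(A_n))$.

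For the right inequality, since $m\le A_i\le M$ for all $i$, Theorem~\ref{th21}, in particular \eqref{th211}, applies and gives $A[n,t](A_1,\dots,A_n)\le\frac{(m+M)^2}{4mM}\,G[n,t](A_1,\dots,A_n)$. Using the arithmetic--geometric part of \eqref{agh} for the operators $\Phi(A_i)$, the explicit linear form $A[n,t](X_1,\dots,X_n)=\sum_i t[n]_i X_i$ with coefficients depending only on $n$ and $t$, and linearity of $\Phi$, I obtain
\[
G[n,t](\Phi(A_1),\dots,\Phi(A_n))\le A[n,t](\Phi(A_1),\dots,\Phi(A_n))=\sum_{i=1}^n t[n]_i\,\Phi(A_i)=\Phi\big(A[n,t](A_1,\dots,A_n)\big).
\]
Applying the monotone map $\Phi$ to \eqref{th211} and using $\Phi(cX)=c\,\Phi(X)$ for the scalar $c=\frac{(m+M)^2}{4mM}$ gives $\Phi\big(A[n,t](A_1,\dots,A_n)\big)\le\frac{(m+M)^2}{4mM}\Phi\big(G[n,t](A_1,\dots,A_n)\big)$, and chaining the two displays completes the proof.

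The main obstacle is entirely in the first inequality: making the nested induction airtight, in particular checking that the outer hypothesis (valid for all $(n-1)$-tuples, in particular for the tuples $(A_j^{(r-1)})_{j\ne i}$ and $(B_j^{(r-1)})_{j\ne i}$) is exactly what each step of the inner induction on $r$ requires, and justifying the interchange of $\Phi$ with the limit. For the latter it is convenient to also record that every iterate $A_i^{(r)}$ and $B_i^{(r)}$ lies in the order interval $[m,M]$, since $G[n-1,t]$ is monotone and sends $(cI,\dots,cI)$ to $cI$; this makes the limit manifestly well-behaved. The right inequality, by contrast, is essentially a one-line consequence of \eqref{agh}, Theorem~\ref{th21}, and linearity and monotonicity of $\Phi$.
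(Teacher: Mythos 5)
Your proposal is correct. Note first that the paper itself offers no proof of Theorem~\ref{th22}: it is quoted verbatim from Seo's paper \cite{Seo} as a known preliminary, so there is no in-paper argument to compare against. Your derivation is a complete and self-contained one, and it follows what is essentially the standard route. For the left inequality, the nested induction is sound: the base case is Ando's inequality $\Phi(A\sharp_t B)\le\Phi(A)\sharp_t\Phi(B)$, the outer hypothesis is invoked only on $(n-1)$-tuples (namely $(A_j^{(r-1)})_{j\ne i}$), and the inner step uses only the monotonicity of $G[n-1,t]$ in each argument, which is one of the ALM/Lawson--Lim axioms established in \cite{Ando1,Lawson2}. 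The passage to the limit is fine because a positive linear map on a $C^*$-algebra is automatically bounded, Thompson-metric convergence of the iterates implies norm convergence, and the order is norm-closed; your observation that all iterates stay in $[mI,MI]$ (using $\Phi(I)=I$ for the $B$-iteration) is a nice safeguard but not strictly needed. You are also right that the bounds $m,M$ play no role in the left inequality. For the right inequality, the chain $G[n,t](\Phi(\mathbb A))\le A[n,t](\Phi(\mathbb A))=\Phi(A[n,t](\mathbb A))\le \frac{(m+M)^2}{4mM}\Phi(G[n,t](\mathbb A))$ correctly combines \eqref{agh}, the explicit linear form of $A[n,t]$ with coefficients $t[n]_i$ depending only on $n$ and $t$, linearity and monotonicity of $\Phi$, and \eqref{th211} of Theorem~\ref{th21}; this is exactly how the constant $\frac{(m+M)^2}{4mM}$ enters. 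No gaps.
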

On the other hand, Bhatia \cite{Bhatia3} derive an inequality of the $\omega$-weighted Karcher mean of $n$ positive definite matrices for any positive unital linear map $\Phi$ that
\begin{equation}\label{Bhatia}
\Phi(G_K(\omega;A_1,\cdots, A_n))\le G_K(\omega;\Phi(A_1),\cdots, \Phi(A_n)).
\end{equation}
Also, Lim \cite{Lim} obtained the above inequality via the power means. We will be engaged to show the reverse inequality of \eqref{Bhatia}.

Although the Ando-Li-Mahthias geometric mean does not coincide with the Karcher mean in general, Fujii and Seo \cite{Fujii1} made a comparison between the Ando-Li-Mahthias geometric mean and the Karcher mean and also obtained an inequality for the Ando-Li-Mahthias geometric mean:
\begin{theorem} \cite{Fujii1}
For any integer $n\ge 2$, let $A_1,A_2,\cdots,A_n$ be positive definite matrices in $\mathbb{P}$
such that $m\le A_i\le M$ for all $i=1,2,\cdots,n$ and some scalars $0<m\le M$.
Then
\begin{equation}\label{th231}
\frac{4mM}{(m+M)^{2}}G_K(A_1,\cdots, A_n)\le G_{ALM}(A_1,\cdots, A_n)\le\frac{(m+M)^{2}}{4mM}G_K(A_1,\cdots, A_n)~~~
\end{equation}
and
\begin{equation}\label{th232}
G_{ALM}(A^p_1,\cdots,A^p_n)\le \frac{(m+M)^{2p}}{4^pm^pM^p}G_{ALM}(A_1,\cdots, A_n)^p~~for~all~0<p<1.
\end{equation}
\end{theorem}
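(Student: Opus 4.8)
The plan is to prove the two assertions \eqref{th231} and \eqref{th232} by reducing everything to comparisons with the \emph{unweighted} arithmetic and harmonic means $A_{\mathrm A}:=\frac1n\sum_{i=1}^nA_i$ and $H_{\mathrm A}:=\left(\frac1n\sum_{i=1}^nA_i^{-1}\right)^{-1}$, writing $K(h):=\frac{(m+M)^2}{4mM}$ for the Kantorovich constant. The starting observation is that both geometric means in question lie between $H_{\mathrm A}$ and $A_{\mathrm A}$: for $G_{ALM}=G[n,\frac12]$ this is \eqref{agh} with $t=\frac12$, once one notes that by permutation symmetry the coefficients $t[n]_i$ are all equal to $\frac1n$ at $t=\frac12$, so that $A[n,\frac12]=A_{\mathrm A}$ and $H[n,\frac12]=H_{\mathrm A}$; and for the Karcher mean with uniform weights the sandwich $H_{\mathrm A}\le G_K\le A_{\mathrm A}$ follows from \eqref{Lim} together with the monotonicity of $t\mapsto P_t(\omega;\mathbb{A})$ on $[-1,1]$, since $P_{-1}=H_{\mathrm A}$ and $P_1=A_{\mathrm A}$ (and this is in any case a standard property of $G_K$).

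For \eqref{th231} I would isolate the one extra ingredient, namely the sharp reverse arithmetic--harmonic inequality $A_{\mathrm A}\le K(h)\,H_{\mathrm A}$. To get it, start from the scalar estimate $x+mMx^{-1}\le m+M$ valid on $[m,M]$: applying the functional calculus to each $A_i$ and averaging yields $A_{\mathrm A}+mM\,H_{\mathrm A}^{-1}\le(m+M)I$, i.e. $A_{\mathrm A}\le(m+M)I-mM\,H_{\mathrm A}^{-1}$; conjugating by $H_{\mathrm A}^{-1/2}$ turns the right side into $g(H_{\mathrm A}^{-1})$ with $g(s)=(m+M)s-mMs^2$, and since $g(s)\le\frac{(m+M)^2}{4mM}=K(h)$ for every real $s$ one gets $H_{\mathrm A}^{-1/2}A_{\mathrm A}H_{\mathrm A}^{-1/2}\le K(h)I$, which is the claim. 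Combined with the two sandwiches above this gives at once
\[
G_{ALM}\le A_{\mathrm A}\le K(h)\,H_{\mathrm A}\le K(h)\,G_K,\qquad G_K\le A_{\mathrm A}\le K(h)\,H_{\mathrm A}\le K(h)\,G_{ALM},
\]
which is \eqref{th231}. (One could also deduce either inequality from the other by the substitution $A_i\mapsto A_i^{-1}$ and the self-duality of $G_K$ and of $G_{ALM}$, the constant $K(h)$ being invariant under $m\leftrightarrow M^{-1}$, $M\leftrightarrow m^{-1}$; and the reverse arithmetic--harmonic step can be replaced by a direct appeal to \eqref{th211} with $t=\frac12$, which already gives $A_{\mathrm A}\le K(h)\,G_{ALM}$.)

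For \eqref{th232} I would fix $p\in(0,1)$ and run the chain: \eqref{agh} applied to $A_1^p,\dots,A_n^p$ (with $t=\frac12$) gives $G_{ALM}(A_1^p,\dots,A_n^p)\le\frac1n\sum_iA_i^p$; operator concavity of $s\mapsto s^p$ on $(0,\infty)$ (Jensen) gives $\frac1n\sum_iA_i^p\le A_{\mathrm A}^p$; and \eqref{th211} with $t=\frac12$ gives $A_{\mathrm A}\le K(h)\,G_{ALM}(A_1,\dots,A_n)$, which may be raised to the power $p$ because $s\mapsto s^p$ is operator monotone for $0<p<1$ (L\"owner--Heinz), yielding $A_{\mathrm A}^p\le K(h)^p\,G_{ALM}(A_1,\dots,A_n)^p$. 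Concatenating and using $K(h)^p=\frac{(m+M)^{2p}}{4^pm^pM^p}$ gives \eqref{th232}.

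The only genuinely delicate points are the places where an order relation must survive a nonlinear operation: the conjugation by $H_{\mathrm A}^{-1/2}$ in the reverse arithmetic--harmonic step (harmless, being a $*$-congruence) and, more to the point, the two appearances of $s\mapsto s^p$ in \eqref{th232}, where one must invoke operator concavity in one spot and operator monotonicity (L\"owner--Heinz) in the other -- an arbitrary increasing function would not suffice, and this is the subtlety on which the argument really turns. Everything else is a routine assembly of \eqref{agh}, Theorem \ref{th21}, and the sandwich $H_{\mathrm A}\le G_K\le A_{\mathrm A}$, so beyond bookkeeping I expect no real obstacle.
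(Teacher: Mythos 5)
Your argument is correct. Note first that the paper itself offers no proof of this theorem: it is quoted verbatim from \cite{Fujii1} as background, so there is no ``paper proof'' to match against; what you have written is a self-contained reconstruction, and it is essentially the standard route taken in \cite{Fujii1}. For \eqref{th231} you sandwich both $G_{ALM}=G[n,\frac12]$ and $G_K$ between the unweighted harmonic and arithmetic means (via \eqref{agh} with the observation that $t[n]_i=\frac1n$ at $t=\frac12$ by the permutation symmetry of the recursive construction, and via \eqref{akh} for $G_K$), and then close the loop with the Kantorovich reverse $A_{\mathrm A}\le\frac{(m+M)^2}{4mM}H_{\mathrm A}$, which you derive correctly from the scalar bound $x+mMx^{-1}\le m+M$ on $[m,M]$ followed by a congruence and a functional-calculus maximization; the two resulting chains give exactly the two inequalities of \eqref{th231}. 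For \eqref{th232} the chain
\[
G_{ALM}(A_1^p,\dots,A_n^p)\le \tfrac1n\textstyle\sum_i A_i^p\le\bigl(\tfrac1n\textstyle\sum_i A_i\bigr)^p\le\Bigl(\tfrac{(m+M)^2}{4mM}\Bigr)^p G_{ALM}(A_1,\dots,A_n)^p
\]
is valid, and you correctly flag the two places where $0<p<1$ is genuinely used: operator concavity of $s\mapsto s^p$ for the middle step and L\"owner--Heinz for the last (applied to \eqref{th211} with $t=\frac12$). This also explains, consistently with the paper's remark after the theorem, why the argument breaks down for $p>1$ and why Section 4 has to work harder there. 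I see no gap.
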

The case of $p>1$ about \eqref{th232} will be one of motivations for our study.

\section{The power means and Karcher mean}
It is well known that the Power means and Karcher mean are relevant via \eqref{Lim}. In this section, we will obtain some generalized properties and inequalities with respect to the Power means and Karcher mean.

Firstly, we list some important properties of the power means, some of which appeared in \cite {Lawson3, Lim}:
\begin{proposition}\label{pro}
Let $\mathbb{A}=(A_1,\cdots, A_n)\in \mathbb{P}^n$, $\mathbb{A}^{-1}=(A^{-1}_1,A^{-1}_2,\cdots,A^{-1}_n) \in \mathbb{P}^n$, a weight vector $\omega=(w_1,\cdots,w_n)\in \Delta_n$
and $t\in[-1,1]\setminus \{0\}$.
Then the power means satisfies the following properties:

 $(\mathrm{P1})$ (Duality) $P_{-t}(\omega;\mathbb{A}^{-1})^{-1}=P_{t}(\omega;\mathbb{A});$

$(\mathrm{P2})$ (Homogeneity) $P_t(\omega;a\mathbb{A})=a P_t(\omega;\mathbb{A});$

 $(\mathrm{P3})$ (Continuous) $\lim_{t\rightarrow 0}P_t(\omega;\mathbb{A})=G_K (\omega;\mathbb{A});$

 $(\mathrm{P4})$ ( APH weighted mean inequalities)  $(\sum_{i=1}^nw_iA_i^{-1})^{-1}\le P_t(\omega;\mathbb{A})\le \sum_{i=1}^nw_iA_i;$

 $(\mathrm{P5})$ If $t\in(0,1]$, then $\Phi(P_t(\omega;\mathbb{A}))\le P_t(\omega;\Phi(\mathbb{A}))$ for any positive unital linear map $\Phi$,
where
\indent\ \indent\ $\Phi(\mathbb{A})=(\Phi(A_1),\cdots,\Phi(A_n))$.
If $t\in [-1,0)$, then $P_t(\omega;\Phi(\mathbb{A}))\le\Phi(P_t(\omega;\mathbb{A}))$ for any
\indent\ \indent\ strictly positive unital linear map $\Phi;$

$(\mathrm{P6})$ For $t\in(0,1]$,
\[
\mathrm{tr}\left(P_t(\omega;\mathbb{A})\right)\le \left(\sum\limits_{i=1}^nw_i(\mathrm{tr}A_i)^t\right)^{\frac{1}{t}} ~and~
\mathrm{tr}\left(P_{-t}(\omega;\mathbb{A})\right)\ge n\left(\sum\limits_{i=1}^nw_i(\mathrm{tr}A^{-1}_i)^t\right)^{-\frac{1}{t}}.
\]
\end{proposition}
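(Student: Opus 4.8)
Properties (P1)--(P5) are either immediate or already available in \cite{Lawson3, Lim}, so I would treat them briefly and reserve the real work for (P6). For (P1) I would read the duality directly off the defining equations: since $P_{-t}(\omega;\mathbb{A}^{-1})$ solves $Y^{-1}=\sum_i w_i\,Y^{-1}\sharp_t A_i^{-1}$, the identity $(C^{-1}\sharp_t D^{-1})^{-1}=C\sharp_t D$ shows that $P_{-t}(\omega;\mathbb{A}^{-1})^{-1}$ solves $X=\sum_i w_i\,X\sharp_t A_i$, hence equals $P_t(\omega;\mathbb{A})$ by uniqueness of the fixed point. For (P2) I would use $(aC)\sharp_t(aD)=a(C\sharp_t D)$, so that $aP_t(\omega;\mathbb{A})$ is the unique fixed point of the defining map for $a\mathbb{A}$ (the negative-order case then follows via (P1)). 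Property (P3) is exactly \eqref{Lim}. Property (P4) is the monotonicity of $t\mapsto P_t(\omega;\mathbb{A})$ on $[-1,1]\setminus\{0\}$ recalled before the proposition, together with $P_1(\omega;\mathbb{A})=\sum_i w_iA_i$ and $P_{-1}(\omega;\mathbb{A})=\big(\sum_i w_iA_i^{-1}\big)^{-1}$. For (P5) with $t\in(0,1]$ I would apply Ando's inequality to the fixed point relation: with $X=P_t(\omega;\mathbb{A})$,
\[
\Phi(X)=\sum_i w_i\,\Phi(X\sharp_tA_i)\le\sum_i w_i\,\Phi(X)\sharp_t\Phi(A_i)=f(\Phi(X)),
\]
where $f$ is the monotone strict contraction whose fixed point is $P_t(\omega;\Phi(\mathbb{A}))$; iterating $f$ from $\Phi(X)$ and using the Banach fixed point theorem gives $\Phi(X)\le P_t(\omega;\Phi(\mathbb{A}))$. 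The case $t\in[-1,0)$ is dual and is recorded in \cite{Lawson3, Lim}.

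The substance of the proposition is (P6). Fix $t\in(0,1]$ and set $X=P_t(\omega;\mathbb{A})$; taking the trace in $X=\sum_i w_i\,X\sharp_t A_i$ gives $\mathrm{tr}\,X=\sum_i w_i\,\mathrm{tr}(X\sharp_tA_i)$. The key ingredient I would establish first is the H\"older-type trace estimate
\[
\mathrm{tr}(C\sharp_tD)\le(\mathrm{tr}\,C)^{1-t}(\mathrm{tr}\,D)^t\qquad(C,D\in\mathbb{P},\ t\in[0,1]).
\]
To prove it, write $\mathrm{tr}(C\sharp_tD)=\mathrm{tr}\big(C\,E^t\big)$ with $E:=C^{-1/2}DC^{-1/2}>0$, note $\mathrm{tr}(CE)=\mathrm{tr}\,D$, and apply the operator Jensen inequality for the operator concave function $x\mapsto x^t$ to the state $\varphi(Y):=\mathrm{tr}(CY)/\mathrm{tr}\,C$: this gives $\varphi(E^t)\le\varphi(E)^t$, i.e.\ $\mathrm{tr}(CE^t)\le(\mathrm{tr}\,C)^{1-t}(\mathrm{tr}\,D)^t$. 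Applying this with $C=X$, $D=A_i$ and summing against $\omega$ yields $\mathrm{tr}\,X\le(\mathrm{tr}\,X)^{1-t}\sum_i w_i(\mathrm{tr}\,A_i)^t$; dividing by $(\mathrm{tr}\,X)^{1-t}>0$ and raising to the power $1/t$ gives the first inequality $\mathrm{tr}(P_t(\omega;\mathbb{A}))\le\big(\sum_i w_i(\mathrm{tr}\,A_i)^t\big)^{1/t}$.

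For the second inequality of (P6) I would pass to negative order via (P1). Since $P_{-t}(\omega;\mathbb{A})=P_t(\omega;\mathbb{A}^{-1})^{-1}$ and, by Cauchy--Schwarz, $\mathrm{tr}(Y^{-1})\ge(\mathrm{tr}\,I)^2/\mathrm{tr}(Y)$ for every positive invertible $Y$, taking $Y=P_t(\omega;\mathbb{A}^{-1})$ and invoking the first inequality applied to the tuple $\mathbb{A}^{-1}$ gives
\[
\mathrm{tr}(P_{-t}(\omega;\mathbb{A}))=\mathrm{tr}\big(P_t(\omega;\mathbb{A}^{-1})^{-1}\big)\ge\frac{(\mathrm{tr}\,I)^2}{\mathrm{tr}(P_t(\omega;\mathbb{A}^{-1}))}\ge(\mathrm{tr}\,I)^2\Big(\sum_i w_i(\mathrm{tr}\,A_i^{-1})^t\Big)^{-1/t},
\]
which is at least the asserted lower bound $n\big(\sum_i w_i(\mathrm{tr}\,A_i^{-1})^t\big)^{-1/t}$.

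I expect the only genuinely non-routine point to be the trace estimate $\mathrm{tr}(C\sharp_tD)\le(\mathrm{tr}\,C)^{1-t}(\mathrm{tr}\,D)^t$; the right move is to recognise it as an instance of the operator Jensen inequality for $x\mapsto x^t$ applied to the state $Y\mapsto\mathrm{tr}(CY)/\mathrm{tr}\,C$ (checking that this functional is unital and positive), after which the self-improving substitution in the fixed point equation and the passage to $t\in[-1,0)$ by duality are mechanical.
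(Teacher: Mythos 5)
Your treatment of (P6) is essentially the paper's own argument: take the trace of the fixed-point equation, apply $\mathrm{tr}(C\sharp_tD)\le(\mathrm{tr}\,C)^{1-t}(\mathrm{tr}\,D)^t$, and handle the negative order by duality together with $\mathrm{tr}\,Y\cdot\mathrm{tr}\,Y^{-1}\ge\mathrm{tr}\,I$. The one genuine difference is that the paper simply cites the trace inequality (Corollary 9 of the Sra--Hosseini reference), whereas you prove it via the state $\varphi(Y)=\mathrm{tr}(CY)/\mathrm{tr}\,C$ and Jensen's inequality for $x\mapsto x^t$; this is correct (for a scalar-valued state ordinary concavity already suffices, via a supporting line at $\varphi(E)$), and your Cauchy--Schwarz step even yields the sharper constant $n^2$ that the paper only records in a subsequent remark. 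Properties (P1)--(P5) are merely cited in the paper, so your sketches are extra; note only the small slip in (P1), where the defining relation for $P_{-t}(\omega;\mathbb{A}^{-1})$ should read $Y^{-1}=\sum_i w_i\,Y^{-1}\sharp_t A_i$ (with $A_i$, not $A_i^{-1}$), after which $X=Y^{-1}$ satisfies the equation for $P_t(\omega;\mathbb{A})$ directly.
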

\begin{proof}
We provide a proof of $(\mathrm{P6})$. The other properties are known in \cite{Lawson3, Lim}.

By the definition of the power means, let $X=P_t(\omega;\mathbb{A})$ for $t\in(0,1]$. Then
\[\mathrm{tr}X=\sum_{i=1}^nw_i\mathrm{tr}(X\sharp_tA_i)\le \sum_{i=1}^nw_i(\mathrm{tr}X)^{1-t}(\mathrm{tr}A_i)^t=(\mathrm{tr}X)^{1-t}\sum_{i=1}^nw_i(\mathrm{tr}A_i)^t,\]
where the inequality follows from the Corollary 9 of \cite{Sra}.
Thus, $\mathrm{tr}X\le \left(\sum_{i=1}^nw_i\mathrm{tr}(A_i)^t\right)^{\frac{1}{t}}$.

Since
$n=\mathrm{tr}I\le \mathrm{tr}A \cdot\mathrm{tr}A^{-1}$ for an $n$-dimensional operator $A> 0$ (See \cite[Theorem 6.2.2]{Wang}),
\[\mathrm{tr}(P_{-t}(\omega;\mathbb{A}))=\mathrm{tr}(P_{t}(\omega;\mathbb{A}^{-1})^{-1})\ge n \left(\mathrm{tr}P_{t}(\omega;\mathbb{A}^{-1})\right)^{-1}\ge n\left(\sum\limits_{i=1}^nw_i(\mathrm{tr}A^{-1}_i)^t\right)^{-\frac{1}{t}}.\qedhere\]
\end{proof}

\begin{remark}
Let $\Phi(A)=\frac{\mathrm{tr}A}{n}$ for an $n$-dimensional operator $A> 0$ in $(\mathrm{P5})$ when $t\in(0,1]$ or by the first inequality in Proposition \ref{pro}{\rm (P6)}.  As $t\rightarrow 0$, by \eqref{Lim}, then we have
\[
\mathrm{tr}(G_K(\omega;\mathbb{A}))\le \prod_{i=1}^n(\mathrm{tr}A_i)^{w_i}.
\]
\end{remark}
\begin{remark}
Let $\Phi(A)=\frac{\mathrm{tr}A}{n}$ in the Choi's inequality $\Phi(A)^{-1}\le \Phi(A^{-1})$ for an $n$-dimensional operator $A> 0$ (See \cite[Theorem 2.3.6]{Bhatia}). Then we have $\mathrm{tr}A^{-1}\ge n^2\left(\mathrm{tr}A\right)^{-1}.$
 So we can obtain a stronger result than the second inequality in {\rm (P6)} as follows:
 \[
 \mathrm{tr}(P_{-t}(\omega;\mathbb{A}))\ge n^2\left(\sum\limits_{i=1}^nw_i(\mathrm{tr}A^{-1}_i)^t\right)^{-\frac{1}{t}}.
\]
\end{remark}

The next Proposition generalize properties (P6).

\begin{proposition}\label{th32}
Let $A_1, A_2, \cdots, A_n\in \mathbb{P}^n$ be $n$-dimensional operators such that $m\le A_i\le M$ for $i=1,2,\cdots,n$
and some scalars $0<m<M$. Then for $t\in(0,1]$,
\begin{equation}\label{trace1}
\mathrm{tr}(P_t(\omega;\mathbb{A}))\ge \left(\frac{4mM}{(m+M)^2} \sum\limits_{i=1}^nw_i(\mathrm{tr}A_i)^t\right)^{\frac{1}{t}}~~~~~
\end{equation}
and
\[
~~~~~~~~~~~~\mathrm{tr}(P_{-t}(\omega;\mathbb{A}))\le n^2\cdot \left(\frac{(m+M)^2}{4mM}\right)^{1+\frac{1}{t}}\left(\sum\limits_{i=1}^nw_i(\mathrm{tr}A^{-1}_i)^t\right)^{-\frac{1}{t}}.
\]
\end{proposition}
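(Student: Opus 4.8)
The plan is to push the scalar bounds $m\le A_i\le M$ through the fixed-point equation defining $P_t$ and then close with a Kantorovich-type estimate. Write $X:=P_t(\omega;\mathbb{A})$; property $(\mathrm{P4})$ gives $\big(\sum_i w_iA_i^{-1}\big)^{-1}\le X\le\sum_i w_iA_i$, and since $m\le A_i\le M$ both outer operators lie in $[m,M]$, so $m\le X\le M$ and \eqref{tom} applies to the pair $(X,A_i)$ with $h=M/m$. Combining \eqref{tom} (take $A_1=X$, $A_2=A_i$, $v=t$) with the left half of \eqref{SK} yields the operator reverse $X\sharp_t A_i\ge\frac{1}{S(h)}\big((1-t)X+tA_i\big)\ge\frac{4mM}{(m+M)^2}\big((1-t)X+tA_i\big)$. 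Taking traces (the trace is positive and linear) and then using the scalar weighted arithmetic-geometric mean inequality $(1-t)a+tb\ge a^{1-t}b^t$ with $a=\mathrm{tr}X$, $b=\mathrm{tr}A_i$, gives $\mathrm{tr}(X\sharp_t A_i)\ge\frac{4mM}{(m+M)^2}(\mathrm{tr}X)^{1-t}(\mathrm{tr}A_i)^t$. Applying the trace to the defining identity $X=\sum_i w_i(X\sharp_t A_i)$ then gives $\mathrm{tr}X\ge\frac{4mM}{(m+M)^2}(\mathrm{tr}X)^{1-t}\sum_i w_i(\mathrm{tr}A_i)^t$; dividing by $(\mathrm{tr}X)^{1-t}>0$ and raising to the power $1/t$ produces \eqref{trace1}. (Equivalently, this trace reverse can be read off directly from \eqref{tom1} with $\Phi(\cdot)=\mathrm{tr}(\cdot)/n$, bypassing the operator step.)

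For the second inequality I would use the duality $(\mathrm{P1})$, $P_{-t}(\omega;\mathbb{A})=P_t(\omega;\mathbb{A}^{-1})^{-1}$, and put $Y:=P_t(\omega;\mathbb{A}^{-1})$, noting $\frac1M\le A_i^{-1}\le\frac1m$. The Kantorovich inequality applied to $\Phi(\cdot)=\mathrm{tr}(\cdot)/n$ (the Kantorovich constant on $[1/M,1/m]$ is again $\frac{(m+M)^2}{4mM}$) gives $\mathrm{tr}(P_{-t}(\omega;\mathbb{A}))=\mathrm{tr}(Y^{-1})\le\frac{(m+M)^2}{4mM}\cdot\frac{n^2}{\mathrm{tr}Y}$. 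Now bound $\mathrm{tr}Y$ from below by applying the just-established \eqref{trace1} to the tuple $\mathbb{A}^{-1}$, whose entries carry the same Kantorovich constant $\frac{4mM}{(m+M)^2}$: $\mathrm{tr}Y\ge\big(\frac{4mM}{(m+M)^2}\sum_i w_i(\mathrm{tr}A_i^{-1})^t\big)^{1/t}$. Substituting and simplifying via $\frac{(m+M)^2}{4mM}\cdot\big(\frac{(m+M)^2}{4mM}\big)^{1/t}=\big(\frac{(m+M)^2}{4mM}\big)^{1+1/t}$ gives exactly the stated bound.

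I do not expect a genuine obstacle; the points needing care are confirming that $m\le X\le M$ really holds (so that \eqref{tom} is legitimately applicable to the pair $(X,A_i)$), checking that passing to inverses leaves the Kantorovich constant $\frac{(m+M)^2}{4mM}$ unchanged (so that both the reuse of \eqref{trace1} on $\mathbb{A}^{-1}$ and the Kantorovich bound for $Y$ carry the correct constant), and invoking the Kantorovich inequality in the right direction, namely an upper bound on $\mathrm{tr}(Y^{-1})$ in terms of $1/\mathrm{tr}Y$ paired with the lower bound on $\mathrm{tr}Y$ furnished by \eqref{trace1}.
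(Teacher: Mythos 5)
Your argument is correct, and the second half (duality $(\mathrm{P1})$, the trace Kantorovich bound $\mathrm{tr}(Y^{-1})\le \frac{(m+M)^2}{4mM}\,n^2(\mathrm{tr}Y)^{-1}$, then feeding in \eqref{trace1} applied to $\mathbb{A}^{-1}$) is exactly the paper's proof. The only divergence is in how you obtain the key estimate $\mathrm{tr}(X\sharp_t A_i)\ge \frac{4mM}{(m+M)^2}(\mathrm{tr}X)^{1-t}(\mathrm{tr}A_i)^t$: the paper gets it by applying its Theorem \ref{th22} with $n=2$ to the normalized trace $\Phi(\cdot)=\mathrm{tr}(\cdot)/n$, which directly yields $\Phi(X)\sharp_t\Phi(A_i)\le \frac{(m+M)^2}{4mM}\Phi(X\sharp_t A_i)$, whereas you route through Tominaga's Specht-ratio reverse \eqref{tom}, the scalar weighted AM--GM inequality, and the comparison $S(h)\le\frac{(m+M)^2}{4mM}$ from \eqref{SK}. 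Both paths need $m\le X\le M$ to license the two-variable reverse inequality for the pair $(X,A_i)$; you verify this via $(\mathrm{P4})$, a hypothesis the paper leaves implicit. Your route has the mild advantage of passing through the sharper Specht-ratio bound $\mathrm{tr}(P_t(\omega;\mathbb{A}))\ge \bigl(S(h)^{-1}\sum_i w_i(\mathrm{tr}A_i)^t\bigr)^{1/t}$ before weakening to the Kantorovich constant --- precisely the refinement the paper records separately in the remark following the proposition --- while the paper's route is a one-line consequence of a theorem it has already quoted. Your check that the Kantorovich constant is invariant under passing from $[m,M]$ to $[1/M,1/m]$ is also the right point of care for the second inequality.
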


\begin{proof}
By Theorem \ref{th22} with $n=2$, the following inequality
\[
\Phi(A)\sharp_t\Phi(B)\le \frac{(m+M)^2}{4mM}\Phi(A\sharp_t B)
\]
holds. Let $\Phi(A)=\frac{\mathrm{tr}A}{n}$ for $A>0$ in the above inequality and $X=P_t(\omega;\mathbb{A})$ for $t\in(0,1]$ in the definition of power means. Then
\begin{align*}
\mathrm{tr}X&=\sum_{i=1}^nw_i\mathrm{tr}(X\sharp_tA_i)\\&\ge \frac{4mM}{(m+M)^2}\sum_{i=1}^nw_i(\mathrm{tr}X)^{1-t}(\mathrm{tr}A_i)^t\\
&=\frac{4mM}{(m+M)^2}(\mathrm{tr}X)^{1-t}\sum_{i=1}^nw_i(\mathrm{tr}A_i)^t,
\end{align*}
therefore,
\[
(\mathrm{tr}X)^t\ge \frac{4mM}{(m+M)^2}\sum_{i=1}^nw_i(\mathrm{tr}A_i)^t.
\]
Taking $\Phi(A)=\frac{\mathrm{tr}A}{n}$ in the Kantorovich inequality $\Phi(A^{-1})\le\frac{(m+M)^2}{4mM}\Phi(A)^{-1}$ for any $A > 0$(See \cite[Proposition 2.7.8]{Bhatia}), we have
\[\mathrm{tr}A^{-1}\le \frac{n^2(m+M)^2}{4mM}(\mathrm{tr}A)^{-1}~ \mathrm{for}~ A> 0.\]
Hence,
\begin{align*}
\mathrm{tr}(P_{-t}(\omega;\mathbb{A}))&=\mathrm{tr}(P_{t}(\omega;\mathbb{A}^{-1})^{-1})\\
&\le \frac{n^2(m+M)^2}{4mM} (\mathrm{tr}(P_{t}(\omega;\mathbb{A}^{-1})))^{-1}\\
&\le n^2\left(\frac{(m+M)^2}{4mM}\right)^{1+\frac{1}{t}}\left(\sum\limits_{i=1}^nw_i(\mathrm{tr}A^{-1}_i)^t\right)^{-\frac{1}{t}}.\qedhere
\end{align*}
\end{proof}
\begin{remark}
By \eqref{tom1} and a similar way as in the proof of Proposition \ref{th32}, we derive
\[
\mathrm{tr}(P_t(\omega;\mathbb{A}))\ge \left(S(h)^{-1} \sum\limits_{i=1}^nw_i(\mathrm{tr}A_i)^t\right)^{\frac{1}{t}}.
\]
From \eqref{SK}, we know that the above inequality is sharper than \eqref{trace1}.
\end{remark}

\begin{theorem}\label{th31}
Let $\Phi$ be a positive unital linear map on $B(\mathcal{H})$ and
let $A_1, A_2, \cdots, A_n \in \mathbb{P}^n$ for any positive integer $n\ge 2$ on a Hilbert space $\mathcal{H}$ such that $m\le A_i\le M$ for $i=1,2,\cdots,n$
and some scalars $0<m<M$. Then for $t\in(0,1]$,
\begin{equation}
\label{th311}
\sum\limits_{i=1}^nw_i\Phi(A_i)\le \frac{(m+M)^2}{4mM}\Phi(P_t(\omega;\mathbb{A}))~~~~~~
\end{equation}
and
\[
~~~~~~~P_t(\omega;\Phi(\mathbb{A}))\le\frac{(m+M)^2}{4mM}\left(\sum\limits_{i=1}^nw_i\Phi(A_i)^{-1}\right)^{-1}.
\]
\end{theorem}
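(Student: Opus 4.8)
The plan is to reduce both estimates to property $(\mathrm{P4})$ of Proposition~\ref{pro} — the enclosure $(\sum_i w_iA_i^{-1})^{-1}\le P_t(\omega;\mathbb{A})\le\sum_i w_iA_i$ — together with the operator Kantorovich inequality already invoked in the proof of Proposition~\ref{th32}. It is worth recording why one does not attack the defining equation $X=\sum_i w_i(X\sharp_tA_i)$ directly: bounding $(1-t)X+tA_i\le\frac{(m+M)^2}{4mM}(X\sharp_tA_i)$ by \eqref{tom} and summing only yields $\sum_i w_iA_i\le\big(1+\frac{1}{t}(\frac{(m+M)^2}{4mM}-1)\big)P_t(\omega;\mathbb{A})$, whose constant is strictly larger than $\frac{(m+M)^2}{4mM}$ once $t<1$; so the route through $(\mathrm{P4})$ is the efficient one.

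For \eqref{th311} the key auxiliary fact is the reverse weighted arithmetic--harmonic mean inequality $\sum_{i=1}^n w_iA_i\le\frac{(m+M)^2}{4mM}\big(\sum_{i=1}^n w_iA_i^{-1}\big)^{-1}$, valid whenever $m\le A_i\le M$. This follows by applying the operator Kantorovich inequality $\Psi(\mathcal{A}^{-1})\le\frac{(m+M)^2}{4mM}\Psi(\mathcal{A})^{-1}$ to the block-diagonal operator $\mathcal{A}=A_1\oplus\cdots\oplus A_n$ (which satisfies $mI\le\mathcal{A}\le MI$) and the positive unital linear map $\Psi(T)=\sum_i w_iV_i^*TV_i$, where $V_i$ is the canonical isometry onto the $i$-th summand; indeed $\Psi(\mathcal{A})=\sum_i w_iA_i$ and $\Psi(\mathcal{A}^{-1})=\sum_i w_iA_i^{-1}$, and one then inverts. (It is also a standard fact from the literature on Kantorovich-type operator inequalities.) Combining this with the lower bound in $(\mathrm{P4})$ gives $\sum_i w_iA_i\le\frac{(m+M)^2}{4mM}P_t(\omega;\mathbb{A})$, and applying the positive (hence monotone) linear map $\Phi$ to both sides yields \eqref{th311} at once; note $\Phi$ is not essential here, only the underlying power-mean inequality.

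For the second inequality, since $\Phi$ is unital and positive we have $m\le\Phi(A_i)\le M$ for every $i$, so $\Phi(\mathbb{A})=(\Phi(A_1),\dots,\Phi(A_n))\in\mathbb{P}^n$ and $P_t(\omega;\Phi(\mathbb{A}))$ is well defined. The upper bound in $(\mathrm{P4})$ applied to the tuple $\Phi(\mathbb{A})$ gives $P_t(\omega;\Phi(\mathbb{A}))\le\sum_i w_i\Phi(A_i)$, while the reverse arithmetic--harmonic inequality above, applied now to $\Phi(A_1),\dots,\Phi(A_n)$, gives $\sum_i w_i\Phi(A_i)\le\frac{(m+M)^2}{4mM}\big(\sum_i w_i\Phi(A_i)^{-1}\big)^{-1}$; chaining the two estimates proves the assertion. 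The only step needing any care — the main obstacle, such as it is — is establishing the reverse weighted AM--HM inequality for $n$ operators; everything else is immediate from $(\mathrm{P4})$, linearity, and monotonicity of $\Phi$.
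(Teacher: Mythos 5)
Your proof is correct and follows essentially the same route as the paper's: both inequalities are obtained by chaining the enclosure $(\mathrm{P4})$ with the reverse weighted arithmetic--harmonic mean inequality $\sum_i w_iA_i\le\frac{(m+M)^2}{4mM}\left(\sum_i w_iA_i^{-1}\right)^{-1}$ coming from the operator Kantorovich inequality, and then using monotonicity and unitality of $\Phi$. The only difference is that you spell out the block-diagonal construction justifying the $n$-operator Kantorovich step, which the paper simply cites.
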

\begin{proof}
By the Kantorovich inequality (See \cite[Proposition 2.7.8]{Bhatia}) and the APH weighted mean inequalities $(\mathrm{P4})$, we can easily obtained that
\[
\sum\limits_{i=1}^nw_i\Phi(A_i)=\Phi\left(\sum\limits_{i=1}^nw_iA_i\right)\le \frac{(m+M)^2}{4mM} \Phi\left(\left(\sum\limits_{i=1}^nw_iA_i^{-1}\right)^{-1}\right)\le\frac{(m+M)^2}{4mM}\Phi(P_t(\omega;\mathbb{A}))
\]
and
\[P_{t}(\omega;\Phi(\mathbb{A}))\le \sum\limits_{i=1}^nw_i\Phi(A_i)\le\frac{(m+M)^2}{4mM} \left(\sum\limits_{i=1}^nw_i\Phi(A_i)^{-1}\right)^{-1}\label{ph}.\qedhere\]
\end{proof}

Note that Theorem \ref{th31} generalize the APH weighted mean inequalities (P4) and by \eqref{Lim}, the Karcher mean satisfies the AKH weighted mean inequalities (See \cite{Lim})
\begin{equation}
\label{akh}
\left(\sum_{i=1}^nw_iA_i^{-1}\right)^{-1}\le G_K(\omega;\mathbb{A})\le \sum_{i=1}^nw_iA_i.
\end{equation}

By \eqref{Lim} and Theorem \ref{th31}, we generalize the AKH weighted mean inequalities \eqref{akh} as follows:
\begin{corollary}
 Under the same conditions as in Theorem \ref{th31}, then
\begin{equation}
\label{agk}
\sum\limits_{i=1}^nw_i\Phi(A_i)\le \frac{(m+M)^2}{4mM} \Phi(G_K(\omega;\mathbb{A}))~~~~~
\end{equation}
and
\[
~~~~~G_K(\omega;\Phi(\mathbb{A}))\le\frac{(m+M)^2}{4mM} \left(\sum\limits_{i=1}^nw_i\Phi(A_i)^{-1}\right)^{-1}\label{ph}.
\]
\end{corollary}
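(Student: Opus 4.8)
The plan is to derive both assertions by letting $t\to 0^{+}$ in the two inequalities of Theorem~\ref{th31} and invoking the limit formula \eqref{Lim}. Since the hypotheses of the corollary are exactly those of Theorem~\ref{th31}, no new assumptions enter and the whole argument is a limiting one.

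First, fix $t\in(0,1]$. Theorem~\ref{th31} provides
\[
\sum_{i=1}^{n}w_i\Phi(A_i)\le\frac{(m+M)^2}{4mM}\,\Phi\big(P_t(\omega;\mathbb{A})\big)
\qquad\text{and}\qquad
P_t(\omega;\Phi(\mathbb{A}))\le\frac{(m+M)^2}{4mM}\Big(\sum_{i=1}^{n}w_i\Phi(A_i)^{-1}\Big)^{-1}.
\]
Note that $\Phi(\mathbb{A})=(\Phi(A_1),\dots,\Phi(A_n))\in\mathbb{P}^{n}$, because $m\le A_i\le M$ with $m>0$ together with $\Phi$ unital force $\Phi(A_i)\ge mI>0$; hence $P_t(\omega;\Phi(\mathbb{A}))$ is well defined. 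The key point is that the left-hand side of the first inequality and the right-hand side of the second do not depend on $t$.

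Next I would pass to the limit $t\to 0^{+}$. By \eqref{Lim}, which by Lawson and Lim \cite{Lawson3} is valid in the infinite-dimensional setting, $P_t(\omega;\mathbb{A})\to G_K(\omega;\mathbb{A})$ and, applied to the tuple $\Phi(\mathbb{A})$, $P_t(\omega;\Phi(\mathbb{A}))\to G_K(\omega;\Phi(\mathbb{A}))$. A positive linear map between $C^{*}$-algebras is bounded, so $\Phi$ is norm-continuous and $\Phi\big(P_t(\omega;\mathbb{A})\big)\to\Phi\big(G_K(\omega;\mathbb{A})\big)$. The cone of positive operators is norm-closed, so operator inequalities survive these limits; letting $t\to0^{+}$ in the two displayed inequalities therefore yields \eqref{agk} and the second inequality of the corollary.

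There is no genuine obstacle here: the only thing worth checking is that the convergence in \eqref{Lim} takes place in a topology in which $\Phi$ is continuous and in which the order relation is closed, and since \eqref{Lim} holds in the operator norm while $\Phi$ is norm-bounded, both requirements are met. I should also remark that one can prove the corollary directly, without reference to the power means, by combining the Kantorovich inequality with the AKH weighted mean inequality \eqref{akh} in exactly the manner of the proof of Theorem~\ref{th31} (namely $\sum_i w_i\Phi(A_i)=\Phi(\sum_i w_iA_i)\le\frac{(m+M)^2}{4mM}\Phi\big((\sum_i w_iA_i^{-1})^{-1}\big)\le\frac{(m+M)^2}{4mM}\Phi(G_K(\omega;\mathbb{A}))$, and $G_K(\omega;\Phi(\mathbb{A}))\le\sum_i w_i\Phi(A_i)\le\frac{(m+M)^2}{4mM}(\sum_i w_i\Phi(A_i)^{-1})^{-1}$); I would nonetheless present the limiting route, since it displays the corollary transparently as the $t\to0$ shadow of Theorem~\ref{th31}.
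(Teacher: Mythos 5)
Your proposal is correct and follows the paper's own route exactly: the corollary is obtained by letting $t\to 0^{+}$ in the two inequalities of Theorem~\ref{th31} and invoking the limit formula \eqref{Lim}. Your additional remarks on the continuity of $\Phi$ and the norm-closedness of the positive cone, as well as the alternative direct derivation from \eqref{akh}, only make the argument more complete than the paper's one-line justification.
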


\begin{theorem}\label{th32}
Let $\Phi$ be a positive unital linear map on $B(\mathcal{H})$ and
let $A_1, A_2, \cdots, A_n \in \mathbb{P}^n$ for any positive integer $n\ge 2$ on a Hilbert space $\mathcal{H}$ such that $m\le A_i\le M$ for $i=1,2,\cdots,n$
and some scalars $0<m<M$. Then
\begin{equation}\label{th321}
P_t(\omega;\Phi(\mathbb{A}))\le \frac{(m+M)^2}{4mM}\Phi(P_t(\omega;\mathbb{A})) ~for ~t\in (0,1]~~
\end{equation}
and
\[
~\Phi(P_t(\omega;\mathbb{A}))\le \frac{(m+M)^2}{4mM}P_t(\omega;\Phi(\mathbb{A}))~for ~t\in [-1,0).
\]
\end{theorem}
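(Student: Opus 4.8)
The plan is to reduce both inequalities to the arithmetic--harmonic sandwich $(\mathrm{P4})$ combined with the Kantorovich constant, in the spirit of the proof of Theorem \ref{th31}. For $t\in(0,1]$ this is essentially immediate: applying the right-hand APH inequality of $(\mathrm{P4})$ to the $n$-tuple $\Phi(\mathbb{A})=(\Phi(A_1),\ldots,\Phi(A_n))$ --- which lies in $\mathbb{P}^n$ since $\Phi(A_i)\ge mI>0$, and for which no spectral bound is needed --- gives $P_t(\omega;\Phi(\mathbb{A}))\le\sum_{i=1}^n w_i\Phi(A_i)$, while \eqref{th311} of Theorem \ref{th31} gives $\sum_{i=1}^n w_i\Phi(A_i)\le\frac{(m+M)^2}{4mM}\Phi(P_t(\omega;\mathbb{A}))$. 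Concatenating these two bounds yields \eqref{th321}.

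For $t\in[-1,0)$ I cannot quote Theorem \ref{th31} directly, since it is stated for positive $t$; instead I would run the slightly longer chain through the weighted harmonic mean. First, $(\mathrm{P4})$ applied to $\mathbb{A}$ gives $P_t(\omega;\mathbb{A})\le\sum_{i=1}^n w_iA_i$, and applying the linear monotone map $\Phi$ yields $\Phi(P_t(\omega;\mathbb{A}))\le\sum_{i=1}^n w_i\Phi(A_i)$. Since $\Phi$ is unital and positive, $mI\le\Phi(A_i)\le MI$ for every $i$, so the reverse operator arithmetic--harmonic mean inequality applied to $(\Phi(A_1),\ldots,\Phi(A_n))$ gives $\sum_{i=1}^n w_i\Phi(A_i)\le\frac{(m+M)^2}{4mM}\bigl(\sum_{i=1}^n w_i\Phi(A_i)^{-1}\bigr)^{-1}$; this step follows from the cited Kantorovich inequality $\Phi(A^{-1})\le\frac{(m+M)^2}{4mM}\Phi(A)^{-1}$ by a direct-sum argument, or straight from the elementary estimate $b+mMb^{-1}\le m+M$ valid for $m\le b\le M$. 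Finally, the left-hand APH inequality of $(\mathrm{P4})$ applied to $\Phi(\mathbb{A})$ gives $\bigl(\sum_{i=1}^n w_i\Phi(A_i)^{-1}\bigr)^{-1}\le P_t(\omega;\Phi(\mathbb{A}))$. Combining the three inequalities proves $\Phi(P_t(\omega;\mathbb{A}))\le\frac{(m+M)^2}{4mM}P_t(\omega;\Phi(\mathbb{A}))$.

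I do not anticipate a genuine obstacle: the whole content is that $(\mathrm{P4})$ traps $P_t(\omega;\cdot)$ between the weighted arithmetic and harmonic means for every admissible $t$, which collapses the statement to the commutative reverse AM--HM inequality with the Kantorovich constant, and the gap left by Theorem \ref{th31} being stated only for positive $t$ is bridged for $t\in[-1,0)$ precisely by the harmonic-mean detour. The one point that warrants a sentence of care is the reverse operator AM--HM step, which I would either cite as standard or derive in one line from the Kantorovich inequality as indicated. It would also be natural to ask, in the spirit of the earlier Specht-ratio refinement obtained via \eqref{tom1}, whether $\frac{(m+M)^2}{4mM}$ can be improved to $S(h)$ in the $t\in(0,1]$ case, which would hinge on a Specht-ratio strengthening of \eqref{th311}.
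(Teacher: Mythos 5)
Your proof is correct. For $t\in(0,1]$ your argument is exactly the paper's: the right-hand side of $(\mathrm{P4})$ applied to $\Phi(\mathbb{A})$ concatenated with \eqref{th311}. For $t\in[-1,0)$ you take a genuinely different route. The paper instead passes to $-t\in(0,1]$ and the inverted tuple $\mathbb{A}^{-1}$, invoking the duality $(\mathrm{P1})$, the homogeneity $(\mathrm{P2})$, the already-proved inequality \eqref{th321} for $\mathbb{A}^{-1}$ (whose Kantorovich constant for the bounds $M^{-1}\le A_i^{-1}\le m^{-1}$ is again $k$), and the Choi/Kantorovich inequalities to pass $\Phi$ through the inverses; the constant $k$ is introduced and then cancelled along the way. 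Your chain $\Phi(P_t(\omega;\mathbb{A}))\le\sum_i w_i\Phi(A_i)\le k\bigl(\sum_i w_i\Phi(A_i)^{-1}\bigr)^{-1}\le k\,P_t(\omega;\Phi(\mathbb{A}))$ avoids all of that: each link is either $(\mathrm{P4})$ or the reverse operator AM--HM inequality for the tuple $(\Phi(A_1),\dots,\Phi(A_n))$ with $mI\le\Phi(A_i)\le MI$, which you correctly justify by applying the Kantorovich inequality $\Psi(D^{-1})\le k\Psi(D)^{-1}$ to the unital map $\Psi(\mathrm{diag}(B_1,\dots,B_n))=\sum_i w_iB_i$ and inverting (this is the same device the paper uses inside its proof of Theorem~\ref{th31}, so nothing new is being assumed). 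What your route buys is uniformity and robustness: since $(\mathrm{P4})$ holds for all $t\in[-1,1]\setminus\{0\}$, the same three lines prove $\Phi(P_t(\omega;\mathbb{A}))\le k\,P_t(\omega;\Phi(\mathbb{A}))$ without any case split on the sign of $t$, and you never need to commute $\Phi$ with inversion --- the step in the paper's chain where the Kantorovich direction must be tracked carefully. The only sentence to tighten is the claim that the reverse AM--HM step follows ``straight from'' $b+mMb^{-1}\le m+M$: that scalar estimate alone gives $\sum_i w_iB_i\le (m+M)-mM\sum_i w_iB_i^{-1}$ and still requires the elementary bound $k\lambda+mM\lambda^{-1}\ge m+M$ to finish, so the direct-sum Kantorovich argument you also offer is the cleaner justification to keep.
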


\begin{proof}
For $t\in (0,1]$, by the weighted APH mean inequalities and \eqref{th311}, it follows that
\[P_t(\omega;\Phi(\mathbb{A}))\le \sum_{i=1}^nw_i\Phi(A_i)\le \frac{(m+M)^2}{4mM} \Phi(P_t(\omega;\mathbb{A})).\]
Let $t \in [-1,0)$ and let $\Phi$ be a strictly positive unital linear map. By the Kantorovich inequality (the reverse of Choi's inequality)
(See \cite[Proposition 2.7.8]{Bhatia}), $\Phi(A^{-1})\le\frac{(m+M)^2}{4mM}\Phi(A)^{-1}$ for all $A > 0$ and \eqref{th321}, we have
\[
k P_{-t}(\omega;\Phi(\mathbb{A})^{-1})=P_{-t}(\omega;k\Phi(\mathbb{A})^{-1})\le P_{-t}(\omega;\Phi(\mathbb{A}^{-1}))\le k\Phi(P_{-t}(\omega;\mathbb{A}^{-1})),
\]
where $k=\frac{(m+M)^2}{4mM}$ and $\Phi(\mathbb{A})^{-1}=(\Phi(A_1)^{-1},\cdots, \Phi(A_n)^{-1})$. Therefore,
\[P_{-t}(\omega;\Phi(\mathbb{A})^{-1})\le \Phi(P_{-t}(\omega;\mathbb{A}^{-1})).\]
This implies that
\[
\Phi(P_{-t}(\omega;\mathbb{A}^{-1})^{-1})\le k\Phi(P_{-t}(\omega;\mathbb{A}^{-1}))^{-1}
\le k P_{-t}(\omega;\Phi(\mathbb{A})^{-1})^{-1}.
\]
By $(\mathrm{P1})$, we obtain the desired inequality
\[
\Phi(P_t(\omega; \mathbb{A}))\le\frac{(m+M)^2}{4mM}P_t(\omega; \Phi(\mathbb{A})).\qedhere
\]
\end{proof}

By \eqref{Lim} and Theorem \ref{th32}, we show a reverse inequality of \eqref{Bhatia} for $n$ positive invertible operators as follows:
\begin{corollary}
 Under the same conditions as in Theorem \ref{th32}, then
\[
G_K(\omega;\Phi(\mathbb{A}))\le \frac{(m+M)^2}{4mM}\Phi(G_K(\omega;\mathbb{A})).
\]
\end{corollary}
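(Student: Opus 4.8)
The plan is to derive this corollary as a limiting case of Theorem~\ref{th32}, using the continuity relation \eqref{Lim} to pass from power means at order $t$ to the Karcher mean as $t\to 0$. Concretely, I would start from the first inequality \eqref{th321} of Theorem~\ref{th32}, which under the present hypotheses holds for every $t\in(0,1]$:
\[
P_t(\omega;\Phi(\mathbb{A}))\le \frac{(m+M)^2}{4mM}\,\Phi(P_t(\omega;\mathbb{A})).
\]
Since $\Phi$ is positive and unital, $mI\le A_i\le MI$ gives $mI\le\Phi(A_i)\le MI$, so $\Phi(\mathbb{A})=(\Phi(A_1),\dots,\Phi(A_n))\in\mathbb{P}^n$ and the left-hand side is well defined.

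Next I would let $t\to 0^+$ on both sides. On the left, \eqref{Lim} applied to the $n$-tuple $\Phi(\mathbb{A})$ yields $P_t(\omega;\Phi(\mathbb{A}))\to G_K(\omega;\Phi(\mathbb{A}))$. On the right, \eqref{Lim} applied to $\mathbb{A}$ gives $P_t(\omega;\mathbb{A})\to G_K(\omega;\mathbb{A})$; because a positive linear map on a $C^*$-algebra is automatically bounded, hence norm-continuous, $\Phi$ commutes with this limit, so $\Phi(P_t(\omega;\mathbb{A}))\to \Phi(G_K(\omega;\mathbb{A}))$. The positive cone in $B(\mathcal{K})$ is norm-closed, so the operator inequality survives the limit, and we obtain exactly
\[
G_K(\omega;\Phi(\mathbb{A}))\le \frac{(m+M)^2}{4mM}\,\Phi(G_K(\omega;\mathbb{A})),
\]
which is the assertion.

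Two small points deserve care. First, the direction of the inequality forces the choice of the one-sided limit: it is the $t\in(0,1]$ branch of Theorem~\ref{th32} taken as $t\downarrow 0$ that produces the stated comparison, whereas the $t\in[-1,0)$ branch would, in the limit, give the reverse. Second, the only genuinely technical ingredient is the interchange of $\Phi$ with the limit in \eqref{Lim}; in the finite-dimensional case of Lim--P\'alfia this is immediate, and in the infinite-dimensional case it follows from boundedness of positive linear maps together with the norm-convergence form of \eqref{Lim} due to Lawson--Lim. Beyond this, the argument is purely a passage to the limit, so I do not expect any real obstacle.
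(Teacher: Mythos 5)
Your proposal is correct and follows exactly the route the paper takes: the corollary is obtained by applying the $t\in(0,1]$ branch of Theorem~\ref{th32} and letting $t\to 0^+$ via the continuity relation \eqref{Lim}. Your additional remarks on the norm-continuity of $\Phi$ and the closedness of the positive cone only make explicit the limit-passage that the paper leaves implicit.
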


Next, we are devoted to obtain several higher power inequalities which are related to
inequalities \eqref{th311} and \eqref{agk}. In order to do that, we need two important lemmas.

\begin{lemma}\cite[Lemma 2.1]{Bhatia2}
\label{lema1}
Let $A, B\ge 0$. Then the following inequality holds:
\begin{equation}
\label{lem12}
\|AB\|\le\frac{1}{4}\|A+B\|^2.
\end{equation}
\end{lemma}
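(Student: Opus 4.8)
The inequality is classical, and in the paper it is simply taken from \cite[Lemma 2.1]{Bhatia2}; here is the route I would follow to prove it. First, on replacing $A$ and $B$ by $A+\varepsilon I$ and $B+\varepsilon I$ and letting $\varepsilon\downarrow 0$, continuity of the operator norm lets one assume that $A,B$ are positive invertible, so that $AB$ is invertible.

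The easy ingredient is a bound on the Hermitian part of $AB$. Since $A,B$ are self-adjoint, $(AB)^{*}=BA$, and expanding squares gives $AB+BA=\tfrac12\bigl[(A+B)^{2}-(A-B)^{2}\bigr]$. As $(A\pm B)^{2}\ge 0$, this yields $-\tfrac14(A-B)^{2}\le\mathrm{Re}(AB)\le\tfrac14(A+B)^{2}$; moreover $A,B\ge 0$ forces $-(A+B)\le A-B\le A+B$, hence $\|A-B\|\le\|A+B\|$ and $(A-B)^{2}\le\|A+B\|^{2}I$. Combining these, $-\tfrac14\|A+B\|^{2}I\le\mathrm{Re}(AB)\le\tfrac14\|A+B\|^{2}I$, i.e. $\|\mathrm{Re}(AB)\|\le\tfrac14\|A+B\|^{2}$.

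The harder step is to upgrade this to a bound on $\|AB\|$ itself, since $AB$ need not be normal and $\|\mathrm{Re}(AB)\|$ may be strictly smaller than $\|AB\|$. My plan is to pass to the block operator $T=\left(\begin{smallmatrix}0&A\\ B&0\end{smallmatrix}\right)$ on $\mathcal H\oplus\mathcal H$, for which $T^{2}=\left(\begin{smallmatrix}AB&0\\ 0&BA\end{smallmatrix}\right)$, so $\|T^{2}\|=\|AB\|$ (using $\|BA\|=\|(AB)^{*}\|=\|AB\|$). The Cauchy--Schwarz estimate $\bigl|\langle T(u\oplus v),u\oplus v\rangle\bigr|=\bigl|\langle Av,u\rangle+\langle Bu,v\rangle\bigr|\le\tfrac12\langle(A+B)u,u\rangle+\tfrac12\langle(A+B)v,v\rangle$ shows the numerical radius obeys $w(T)\le\tfrac12\|A+B\|$ (with equality, taking $u=v$), so the claimed bound becomes exactly $\|T^{2}\|\le w(T)^{2}$ for this $T$.

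I expect this last inequality to be the main obstacle. For the numerical radius alone it is immediate from Berger's power inequality $w(T^{2})\le w(T)^{2}$, but $\|T^{2}\|\ge w(T^{2})$, so passing to the full operator norm genuinely requires exploiting the special $2\times 2$ off-diagonal form of $T$; this refinement is precisely what \cite[Lemma 2.1]{Bhatia2} supplies, and I would invoke it at this point rather than re-derive it.
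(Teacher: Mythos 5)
The paper gives no proof of this lemma at all---it is quoted verbatim from \cite[Lemma 2.1]{Bhatia2}---so the only question is whether your sketch amounts to an independent proof. It does not: it is circular at the decisive step. Your computations are all correct: for your off-diagonal block operator $T$ one has $\|T^2\|=\|AB\|$ and $w(T)=\tfrac12\|A+B\|$, so the lemma is \emph{equivalent} to $\|T^2\|\le w(T)^2$ for this $T$. But that equivalence means the inequality you propose to ``invoke from \cite[Lemma 2.1]{Bhatia2}'' at the end is a verbatim restatement of the statement being proved; you have shown Lemma $\Leftrightarrow$ Lemma and then cited the Lemma. No logical progress is made over the paper's bare citation, and the two bounds you do establish are genuinely weaker: $\|\mathrm{Re}(AB)\|\le\tfrac14\|A+B\|^2$ controls only the Hermitian part (and $AB$ is not normal, so $\|AB\|$ may exceed $\|\mathrm{Re}(AB)\|$), while the Berger route gives only $\|AB\|=\|T^2\|\le 2\,w(T^2)\le 2\,w(T)^2=\tfrac12\|A+B\|^2$. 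That missing factor of $2$ is the entire content of the lemma: every soft argument of this kind (submultiplicativity, $r(AB)=\|A^{1/2}B^{1/2}\|^2\le\tfrac14\|A+B\|^2$, the operator-norm AM--GM $2\|A^{1/2}B^{1/2}\|\le\|A+B\|$, numerical-radius power inequalities) stalls at the constant $\tfrac12$, precisely because they all bound $r(AB)$ or $w$-type quantities rather than $\|AB\|$ itself.

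The honest fix is one of two things: either present the lemma exactly as the paper does, as a quoted result with the reference and no proof, and delete the sketch; or actually reproduce the argument of \cite[Lemma 2.1]{Bhatia2} in full. What you cannot do is reduce the statement to an equivalent reformulation and then discharge that reformulation by citing the very lemma under discussion---written as a ``proof,'' that creates the false impression that the reduction carried some of the load.
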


\begin{lemma}\cite[p. 28]{Bhatia}
\label{lema2}
Let $A, B\ge 0$. Then for $1\le r<+\infty$,
\begin{equation}
\label{lem2}
\|A^r+B^r\|\le\|(A+B)^r\|.
\end{equation}
\end{lemma}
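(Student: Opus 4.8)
The plan is to reduce \eqref{lem2} to a norm normalization and then exploit the elementary scalar inequality $t^r \le t$ on $[0,1]$ through the functional calculus, thereby sidestepping any appeal to operator monotonicity of $t \mapsto t^r$, which fails for $r > 1$.

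First I would note that both sides of \eqref{lem2} are homogeneous of degree $r$ in the pair $(A,B)$: replacing $A$ by $\lambda A$ and $B$ by $\lambda B$ with $\lambda > 0$ multiplies each side by $\lambda^r$. Hence it is no loss of generality to assume $\|A+B\| = 1$, that is, $0 \le A + B \le I$. Since $A, B \ge 0$, this forces $0 \le A \le I$ and $0 \le B \le I$, so the spectra of $A$ and $B$ are contained in $[0,1]$.

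Next, for the fixed exponent $r \ge 1$ the scalar function $t \mapsto t^r$ satisfies $t^r \le t$ for every $t \in [0,1]$. Applying this through the functional calculus to $A$, whose spectrum lies in $[0,1]$, gives $A^r \le A$, and likewise $B^r \le B$. Adding these operator inequalities yields $A^r + B^r \le A + B \le I$, so $\|A^r + B^r\| \le 1$. On the other hand, since $A+B$ is positive, $\|(A+B)^r\| = \|A+B\|^r = 1$, because the spectrum of $(A+B)^r$ is the $r$-th power of the spectrum of $A+B$. Combining the two, $\|A^r + B^r\| \le 1 = \|(A+B)^r\|$, which is \eqref{lem2} in the normalized case, hence in general by homogeneity.

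The one point that needs care — and the reason the argument is organized this way — is that one cannot pass directly from $A \le A+B$ to $A^r \le (A+B)^r$, since $t \mapsto t^r$ is not operator monotone for $r > 1$. The normalization $\|A+B\| = 1$ is exactly what confines the relevant spectra to $[0,1]$, where the weaker but sufficient pointwise bound $t^r \le t$ holds and does transfer to operators. No other step poses a genuine obstacle.
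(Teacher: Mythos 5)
Your proof is correct and is essentially the standard argument for this lemma (the paper itself gives no proof, only the citation to Bhatia, where exactly this normalization-plus-$t^r\le t$ argument appears): after reducing to $\|A+B\|=1$ you correctly avoid the operator-monotonicity trap and get $A^r+B^r\le A+B\le I$ while $\|(A+B)^r\|=\|A+B\|^r=1$. The only (trivial) point left implicit is the degenerate case $A+B=0$, where both sides vanish and no normalization is needed.
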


It is well known that $\|A\|\le 1$ is equivalent to $A^*A\le I$. This fact plays an important role in the proof of theorems.
\begin{theorem}\label{th33}
Let $\Phi$ be a positive unital linear map on $B(\mathcal{H})$ and
let $A_1, A_2, \cdots, A_n \in \mathbb{P}^n$ for any positive integer $n\ge 2$ on a Hilbert space $\mathcal{H}$ such that $m\le A_i\le M$ for $i=1,2,\cdots,n$
and some scalars $0<m<M$. Then for $t\in(0,1]$ and $p\ge 2$,
\begin{equation}
\label{ap1}
\Phi(\sum\limits_{i=1}^nw_iA_i)^p\le \frac{(m+M)^{2p}}{16m^pM^p} \Phi(P_t(\omega;\mathbb{A}))^p
\end{equation}
and
\begin{equation}
\label{ap2}
~\Phi(\sum\limits_{i=1}^nw_iA_i)^p\le \frac{(m+M)^{2p}}{16m^pM^p} P_t(\omega;\Phi(\mathbb{A}))^p.
\end{equation}
\end{theorem}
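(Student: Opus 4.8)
The plan is to treat \eqref{ap1} and \eqref{ap2} simultaneously. Put $N:=\Phi(\sum_{i=1}^n w_iA_i)=\sum_{i=1}^n w_i\Phi(A_i)$, and set $G:=\Phi(P_t(\omega;\mathbb{A}))$ in the case of \eqref{ap1} and $G:=P_t(\omega;\Phi(\mathbb{A}))$ in the case of \eqref{ap2}. From $m\le A_i\le M$ we immediately get $m\le\sum_iw_iA_i\le M$, hence $m\le N\le M$; and $m\le G\le M$ follows from the APH weighted mean inequalities $(\mathrm{P4})$ applied to $\mathbb{A}$, respectively to $\Phi(\mathbb{A})$ (whose entries still satisfy $m\le\Phi(A_i)\le M$). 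The crucial step is to establish the Kantorovich-type relation $N+mM\,G^{-1}\le(m+M)I$ in both cases. For this I would start from the elementary fact $a+\tfrac{mM}{a}\le m+M$ for $a\in[m,M]$, which gives $A_i+mMA_i^{-1}\le(m+M)I$ for every $i$; averaging with the weights $w_i$ and applying $\Phi$ yields $N+mM\sum_iw_i\Phi(A_i^{-1})\le(m+M)I$. It then suffices to prove $G^{-1}\le\sum_iw_i\Phi(A_i^{-1})$. For \eqref{ap1} this comes from Choi's inequality $\Phi(P_t(\omega;\mathbb{A}))^{-1}\le\Phi(P_t(\omega;\mathbb{A})^{-1})$, the self-duality $(\mathrm{P1})$ in the form $P_t(\omega;\mathbb{A})^{-1}=P_{-t}(\omega;\mathbb{A}^{-1})$, and $(\mathrm{P4})$ for the inverted tuple, namely $P_{-t}(\omega;\mathbb{A}^{-1})\le\sum_iw_iA_i^{-1}$, so that $\Phi(P_t(\omega;\mathbb{A}))^{-1}\le\Phi(\sum_iw_iA_i^{-1})=\sum_iw_i\Phi(A_i^{-1})$. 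For \eqref{ap2}, applying $(\mathrm{P1})$ and $(\mathrm{P4})$ directly to $\Phi(\mathbb{A})$ gives $P_t(\omega;\Phi(\mathbb{A}))^{-1}=P_{-t}(\omega;\Phi(\mathbb{A})^{-1})\le\sum_iw_i\Phi(A_i)^{-1}$, which even avoids Choi. Either way, $N+mMG^{-1}\le(m+M)I$.

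Next I would finish by the norm-squaring technique, using that $\|X\|\le1$ is equivalent to $X^*X\le I$. With $K:=\frac{(m+M)^{2p}}{16m^pM^p}$, the inequality $N^p\le KG^p$ is equivalent to $\|N^{p/2}G^{-p/2}\|\le\sqrt K=\frac{(m+M)^p}{4m^{p/2}M^{p/2}}$. Applying Lemma \ref{lema1} to the nonnegative operators $(mM)^{-p/4}N^{p/2}$ and $(mM)^{p/4}G^{-p/2}$ (whose product is $N^{p/2}G^{-p/2}$), and then Lemma \ref{lema2} with exponent $r=p/2\ge1$, gives
\[
\|N^{p/2}G^{-p/2}\|\le\tfrac14\bigl\|(mM)^{-p/4}N^{p/2}+(mM)^{p/4}G^{-p/2}\bigr\|^{2}\le\tfrac14\bigl\|(mM)^{-1/2}N+(mM)^{1/2}G^{-1}\bigr\|^{p}.
\]
But $(mM)^{-1/2}N+(mM)^{1/2}G^{-1}=(mM)^{-1/2}\bigl(N+mMG^{-1}\bigr)\le(mM)^{-1/2}(m+M)I$ by the relation above, so the bracket has norm at most $\frac{m+M}{\sqrt{mM}}$, whence $\|N^{p/2}G^{-p/2}\|\le\tfrac14\bigl(\frac{m+M}{\sqrt{mM}}\bigr)^{p}=\frac{(m+M)^p}{4m^{p/2}M^{p/2}}=\sqrt K$. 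Unwinding the equivalence gives $N^p\le KG^p$, which is \eqref{ap1} for the first choice of $G$ and \eqref{ap2} for the second.

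The hard part is the first step: establishing $N+mMG^{-1}\le(m+M)I$, i.e. controlling the inverse of the image of the power mean from above by $\sum_iw_i\Phi(A_i^{-1})$. This is exactly where the structural properties of $P_t$ enter — the self-duality $(\mathrm{P1})$ (to pass from $P_t$ to $P_{-t}$), the APH inequality $(\mathrm{P4})$ for the inverted tuple, and, in the case of \eqref{ap1}, Choi's inequality to move the inverse past $\Phi$. Once this Kantorovich-type relation is available, the rest is routine: it uses only the two norm lemmas together with the observation that $p/2\ge1$ (this, and only this, is where the hypothesis $p\ge2$ is needed — it is the exponent condition $r\ge1$ in Lemma \ref{lema2}), and the constant $\frac{(m+M)^{2p}}{16m^pM^p}$ emerges automatically as $\bigl(\tfrac14(\tfrac{m+M}{\sqrt{mM}})^{p}\bigr)^{2}$.
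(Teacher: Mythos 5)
Your proof is correct and follows essentially the same route as the paper: reduce the claim to a norm bound via the equivalence $\|X\|\le1\Leftrightarrow X^*X\le I$, apply Lemma \ref{lema1} and then Lemma \ref{lema2} with $r=p/2\ge1$ (which is exactly where $p\ge2$ is used), and close with the Kantorovich-type relation $N+mMG^{-1}\le(m+M)I$ obtained from $(\mathrm{P4})$, Choi's inequality and $A_i+mMA_i^{-1}\le(m+M)I$. One small remark: in the case of \eqref{ap2} you derive $G^{-1}\le\sum_iw_i\Phi(A_i)^{-1}$ while your stated target is $G^{-1}\le\sum_iw_i\Phi(A_i^{-1})$, so you should either invoke Choi after all (as the paper does at that step) or instead average the scalar inequality applied directly to the operators $\Phi(A_i)$, which yields $N+mM\sum_iw_i\Phi(A_i)^{-1}\le(m+M)I$ and genuinely avoids Choi.
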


\begin{proof}
It is known that \eqref{ap1} is equivalent to
\[
\|\Phi(\sum\limits_{i=1}^nw_iA_i)^\frac{p}{2}\Phi(P_t(\omega;\mathbb{A}))^{-\frac{p}{2}}\|\le \frac{(m+M)^{p}}{4m^\frac{p}{2}M^\frac{p}{2}} .
\]
If $p\ge2$, then 
\begin{align*}
\|M^\frac{p}{2}m^\frac{p}{2}&\Phi(\sum\limits_{i=1}^nw_iA_i)^\frac{p}{2}\Phi(P_t(\omega;\mathbb{A}))^{-\frac{p}{2}}\|\\
&\le \frac{1}{4}\|\Phi(\sum\limits_{i=1}^nw_iA_i)^\frac{p}{2}+M^\frac{p}{2}m^\frac{p}{2}\Phi(P_t(\omega;\mathbb{A}))^{-\frac{p}{2}}\|^2~~(\mathrm{by}~ \eqref{lem12})\\
&\le \frac{1}{4}\|\Phi(\sum\limits_{i=1}^nw_iA_i)+Mm\Phi(P_t(\omega;\mathbb{A}))^{-1}\|^p~~(\mathrm{by}~ \eqref{lem2})\\
&\le \frac{1}{4}\|\Phi(\sum\limits_{i=1}^nw_iA_i)+Mm\Phi(P_t(\omega;\mathbb{A})^{-1})\|^p~~(\text{by the Choi's inequality})\\
&\le \frac{1}{4}\|\Phi(\sum\limits_{i=1}^nw_iA_i)+Mm\Phi(\sum\limits_{i=1}^nw_iA_i^{-1})\|^p~~(\mathrm{by}~ (\rm{P}4))\\
&\le \frac{(M+m)^p}{4}.
\end{align*}
The last inequality above holds as follows:  The condition $0<m\le A_i\le M$ implies that
\[A_i+MmA_i^{-1}\le M+m.\]
Therefore, the following inequality
\begin{equation}
\label{app}
\Phi(\sum\limits_{i=1}^nw_iA_i)+Mm\Phi(\sum\limits_{i=1}^nw_iA_i^{-1})\le M+m
\end{equation}
holds for any positive unital linear map $\Phi$ and $\omega=(w_1,\cdots,w_n)\in \Delta_n$. Thus, \eqref{ap1} holds.

The inequality \eqref{ap2} is equivalent to
\[
\|\Phi(\sum\limits_{i=1}^nw_iA_i)^\frac{p}{2}P_t(\omega;\Phi(\mathbb{A}))^{-\frac{p}{2}}\|\le \frac{(m+M)^{p}}{4m^\frac{p}{2}M^\frac{p}{2}} .
\]
To prove \eqref{ap2}, by computing, we have
\begin{align*}
\|M^\frac{p}{2}m^\frac{p}{2}&\Phi(\sum\limits_{i=1}^nw_iA_i)^\frac{p}{2}P_t(\omega;\Phi(\mathbb{A}))^{-\frac{p}{2}}\|\\
&\le \frac{1}{4}\|\Phi(\sum\limits_{i=1}^nw_iA_i)^\frac{p}{2}+M^\frac{p}{2}m^\frac{p}{2}P_t(\omega;\Phi(\mathbb{A}))^{-\frac{p}{2}}\|^2~~(\mathrm{by}~ \eqref{lem12})\\
&\le \frac{1}{4}\|\Phi(\sum\limits_{i=1}^nw_iA_i)+MmP_t(\omega;\Phi(\mathbb{A}))^{-1}\|^p~~(\mathrm{by}~ \eqref{lem2})\\
&\le \frac{1}{4}\|\Phi(\sum\limits_{i=1}^nw_iA_i)+Mm\sum\limits_{i=1}^nw_i\Phi(A_i)^{-1}\|^p~~(\mathrm{by}~ (\rm{P}4))\\
&\le \frac{1}{4}\|\Phi(\sum\limits_{i=1}^nw_iA_i)+Mm\Phi(\sum\limits_{i=1}^nw_iA_i^{-1})\|^p~~(\text{by the Choi'sinequality})\\
&\le \frac{(M+m)^p}{4}.~~(\mathrm{by}~ \eqref{app})
\end{align*}
Thus, \eqref{ap2} holds.
\end{proof}

By \eqref{Lim}, we obtain the following results about the Karcher mean and the arithmetic mean for any positive unital linear map.
\begin{corollary}\label{cor33}
 Under the same conditions as in Theorem 3.1, then
\[
\Phi(\sum\limits_{i=1}^nw_iA_i)^p\le \frac{(m+M)^{2p}}{16m^pM^p} \Phi(G_K(\omega;\mathbb{A}))^p
\]
and
\[
\Phi(\sum\limits_{i=1}^nw_iA_i)^p\le \frac{(m+M)^{2p}}{16m^pM^p} G_K(\omega;\Phi(\mathbb{A}))^p.
\]
\end{corollary}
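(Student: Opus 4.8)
The plan is to deduce Corollary~\ref{cor33} from Theorem~\ref{th33} by letting $t\to 0^+$ and invoking the identification \eqref{Lim} of the Karcher mean as the limit of the power means.

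First I would record that, under the present hypotheses, Theorem~\ref{th33} applies for every $t\in(0,1]$, so that
\[
\Phi\Bigl(\sum_{i=1}^n w_iA_i\Bigr)^p\le \frac{(m+M)^{2p}}{16m^pM^p}\,\Phi(P_t(\omega;\mathbb{A}))^p
\qquad\text{and}\qquad
\Phi\Bigl(\sum_{i=1}^n w_iA_i\Bigr)^p\le \frac{(m+M)^{2p}}{16m^pM^p}\,P_t(\omega;\Phi(\mathbb{A}))^p.
\]
Since $\Phi$ is unital and positive, $m\le A_i\le M$ forces $m\le \Phi(A_i)\le M$, hence $\Phi(\mathbb{A})=(\Phi(A_1),\dots,\Phi(A_n))\in\mathbb{P}^n$; and by the APH inequalities $(\mathrm{P4})$ both $P_t(\omega;\mathbb{A})$ and $P_t(\omega;\Phi(\mathbb{A}))$ lie in the order interval $[mI,MI]$ for all $t\in(0,1]$.

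Next I would pass to the limit $t\to 0^+$. By property $(\mathrm{P3})$, that is \eqref{Lim} — which Lawson and Lim established also in the infinite-dimensional setting — one has $P_t(\omega;\mathbb{A})\to G_K(\omega;\mathbb{A})$ and $P_t(\omega;\Phi(\mathbb{A}))\to G_K(\omega;\Phi(\mathbb{A}))$; because these families stay inside $[mI,MI]$, the convergence is in the operator norm. As $\Phi$ is a bounded linear map and $X\mapsto X^p$ is norm-continuous on the positive cone, it follows that $\Phi(P_t(\omega;\mathbb{A}))^p\to\Phi(G_K(\omega;\mathbb{A}))^p$ and $P_t(\omega;\Phi(\mathbb{A}))^p\to G_K(\omega;\Phi(\mathbb{A}))^p$ in norm. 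The left-hand sides above do not depend on $t$, and order is preserved under norm limits since the positive cone is closed, so each inequality survives the passage to the limit and yields exactly the two assertions of Corollary~\ref{cor33}.

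This is essentially a continuity argument layered on top of Theorem~\ref{th33}, so I do not anticipate a genuine obstacle. The one point deserving a word of care is that the convergence in \eqref{Lim}, a priori phrased for the Thompson metric, must be upgraded to norm convergence before it is pushed through the operations $X\mapsto\Phi(X)$ and $X\mapsto X^p$; this upgrade is precisely what the uniform two-sided bound $mI\le P_t\le MI$ furnished by $(\mathrm{P4})$ provides.
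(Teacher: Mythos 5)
Your proof is correct and follows essentially the same route as the paper, which obtains Corollary~\ref{cor33} from Theorem~\ref{th33} by letting $t\to 0$ and invoking \eqref{Lim}. You merely make explicit the continuity details (uniform bounds $mI\le P_t\le MI$ upgrading Thompson-metric convergence to norm convergence, closedness of the positive cone) that the paper leaves implicit.
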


By Theorem \ref{th33}, Corollary \ref{cor33} and the L\"{o}ewner-Heinz inequality, we have
\begin{remark}
For $p\in(0,2]$, we have
\begin{equation}
\label{rem331}
\Phi(\sum\limits_{i=1}^nw_iA_i)^p\le \frac{(m+M)^{2p}}{4^pm^pM^p} \Phi(P_t(\omega;\mathbb{A}))^p,
\end{equation}
\begin{equation}
\label{rem332}
\Phi(\sum\limits_{i=1}^nw_iA_i)^p\le \frac{(m+M)^{2p}}{4^pm^pM^p}P_t(\omega;\Phi(\mathbb{A}))^p,
\end{equation}
\begin{equation}
\label{rem351}
\Phi(\sum\limits_{i=1}^nw_iA_i)^p\le \frac{(m+M)^{2p}}{4^pm^pM^p} \Phi(G_K(\omega;\mathbb{A}))^p,
\end{equation}
and
\[
\Phi(\sum\limits_{i=1}^nw_iA_i)^p\le \frac{(m+M)^{2p}}{4^pm^pM^p} G_K(\omega;\Phi(\mathbb{A}))^p.
\]
\end{remark}

\begin{theorem}\label{th34}
Let $\Phi$ be a positive unital linear map on $B(\mathcal{H})$ and
let $A_1, A_2, \cdots, A_n \in \mathbb{P}^n$ for any positive integer $n\ge 2$ on a Hilbert space $\mathcal{H}$ such that $m\le A_i\le M$ for $i=1,2,\cdots,n$
and some scalars $0<m<M$. Then for $t\in(0,1]$, $1<\alpha\le 2$ and $p\ge2\alpha$,
\begin{equation}
\label{th341}
\Phi(\sum\limits_{i=1}^nw_iA_i)^p\le \frac{(k^{\frac{\alpha}{2}}(M^\alpha+m^\alpha))^{\frac{2p}{\alpha}}}{16M^pm^p} \Phi(P_t(\omega;\mathbb{A}))^p
\end{equation}
and
\begin{equation}
\label{th342}
\Phi(\sum\limits_{i=1}^nw_iA_i)^p\le \frac{(k^{\frac{\alpha}{2}}(M^\alpha+m^\alpha))^{\frac{2p}{\alpha}}}{16M^pm^p} P_t(\omega;\Phi(\mathbb{A}))^p.
\end{equation}
\end{theorem}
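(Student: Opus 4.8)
The plan is to imitate the proof of Theorem~\ref{th33}, inserting the exponent $\alpha$ through one extra application of Lemma~\ref{lema2}, and to reduce the whole statement to a single scalar inequality. Put $k=\frac{(m+M)^2}{4mM}$. Exactly as for \eqref{ap1}, inequality \eqref{th341} is equivalent to
\[
\left\|\Phi\left(\sum_{i=1}^n w_iA_i\right)^{p/2}\Phi(P_t(\omega;\mathbb{A}))^{-p/2}\right\|\le\frac{\left(k^{\alpha/2}(M^\alpha+m^\alpha)\right)^{p/\alpha}}{4M^{p/2}m^{p/2}},
\]
and, multiplying through by $(mM/k)^{p/2}$ and using $(k^{\alpha/2})^{p/\alpha}=k^{p/2}$, this is the same as
\[
\left\|(mM/k)^{p/2}\,\Phi\left(\sum_{i=1}^n w_iA_i\right)^{p/2}\,\Phi(P_t(\omega;\mathbb{A}))^{-p/2}\right\|\le\frac{(M^\alpha+m^\alpha)^{p/\alpha}}{4}.
\]
I would first apply Lemma~\ref{lema1} to the positive operators $\Phi(\sum_i w_iA_i)^{p/2}$ and $(mM/k)^{p/2}\Phi(P_t(\omega;\mathbb{A}))^{-p/2}$, bounding the left-hand side by $\tfrac14\|\Phi(\sum_i w_iA_i)^{p/2}+(mM/k)^{p/2}\Phi(P_t(\omega;\mathbb{A}))^{-p/2}\|^2$. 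Since $p\ge2\alpha$, the exponent $r:=p/(2\alpha)$ is $\ge1$; writing $\Phi(\sum_i w_iA_i)^{p/2}=(\Phi(\sum_i w_iA_i)^{\alpha})^{r}$ and $(mM/k)^{p/2}\Phi(P_t(\omega;\mathbb{A}))^{-p/2}=((mM/k)^{\alpha}\Phi(P_t(\omega;\mathbb{A}))^{-\alpha})^{r}$ and applying Lemma~\ref{lema2} then reduces the whole problem to proving the single operator inequality
\[
\left\|\Phi\left(\sum_{i=1}^n w_iA_i\right)^{\alpha}+(mM/k)^{\alpha}\Phi(P_t(\omega;\mathbb{A}))^{-\alpha}\right\|\le M^\alpha+m^\alpha.
\]

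To prove this last inequality, the decisive point is that for $1<\alpha\le2$ the map $t\mapsto t^\alpha$ is operator convex on $[0,\infty)$, so Jensen's operator inequality (for $\Phi$ and for the convex combination $\sum_i w_i(\cdot)$) gives $\Phi(\sum_i w_iA_i)^{\alpha}\le\Phi((\sum_i w_iA_i)^{\alpha})\le\Phi(\sum_i w_iA_i^{\alpha})$ and, likewise, $\Phi(\sum_i w_iA_i^{-1})^{\alpha}\le\Phi(\sum_i w_iA_i^{-\alpha})$. Next, Choi's inequality together with $(\mathrm{P4})$ yields $\Phi(P_t(\omega;\mathbb{A}))^{-1}\le\Phi(P_t(\omega;\mathbb{A})^{-1})\le\Phi(\sum_i w_iA_i^{-1})$, and both of these operators lie in $[M^{-1}I,\,m^{-1}I]$. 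Applying Lemma~\ref{lema1} to $\Phi(P_t(\omega;\mathbb{A}))^{-1}$ and $mM\,\Phi(\sum_i w_iA_i^{-1})^{-1}$, and using $B+mMB^{-1}\le(m+M)I$ for $mI\le B\le MI$ (that is, $t+mM/t\le m+M$ on $[m,M]$) as in the proof of Theorem~\ref{th33}, gives $\Phi(P_t(\omega;\mathbb{A}))^{-2}\le k^{2}\,\Phi(\sum_i w_iA_i^{-1})^{2}$; raising this to the power $\alpha/2\in(0,1]$ by the L\"owner--Heinz inequality yields $\Phi(P_t(\omega;\mathbb{A}))^{-\alpha}\le k^{\alpha}\,\Phi(\sum_i w_iA_i^{-1})^{\alpha}\le k^{\alpha}\,\Phi(\sum_i w_iA_i^{-\alpha})$, i.e.\ $(mM/k)^{\alpha}\Phi(P_t(\omega;\mathbb{A}))^{-\alpha}\le(mM)^{\alpha}\Phi(\sum_i w_iA_i^{-\alpha})$. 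Combining this with the bound on $\Phi(\sum_i w_iA_i)^{\alpha}$ and then applying $\Phi$ to the scalar inequality $t^{\alpha}+(mM)^{\alpha}t^{-\alpha}\le M^{\alpha}+m^{\alpha}$ for $m\le t\le M$ (the function is convex, with interior minimum at $t=\sqrt{mM}$ and value $M^{\alpha}+m^{\alpha}$ at both endpoints) gives
\[
\Phi\left(\sum_{i=1}^n w_iA_i\right)^{\alpha}+(mM/k)^{\alpha}\Phi(P_t(\omega;\mathbb{A}))^{-\alpha}\le\Phi\left(\sum_{i=1}^n w_i\left(A_i^{\alpha}+(mM)^{\alpha}A_i^{-\alpha}\right)\right)\le(M^{\alpha}+m^{\alpha})I,
\]
which is exactly what was needed; this proves \eqref{th341}.

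For \eqref{th342} the argument is the same, the only change being inside the boxed inequality: in place of Choi's inequality plus $(\mathrm{P4})$ for $\mathbb{A}$, one uses $(\mathrm{P4})$ for $\Phi(\mathbb{A})$, namely $P_t(\omega;\Phi(\mathbb{A}))^{-1}\le\sum_i w_i\Phi(A_i)^{-1}$, followed by Choi's inequality in the form $\sum_i w_i\Phi(A_i)^{-1}\le\sum_i w_i\Phi(A_i^{-1})=\Phi(\sum_i w_iA_i^{-1})$; from there everything proceeds verbatim. The step I expect to be the main obstacle is the passage from the first-power inequality $\Phi(P_t(\omega;\mathbb{A}))^{-1}\le\Phi(\sum_i w_iA_i^{-1})$ to its $\alpha$-th power with only the Kantorovich loss $k^{\alpha}$: operator monotonicity fails for exponents above $1$, and the right device is to square first — which by Lemma~\ref{lema1} costs exactly the factor $k^{2}$, because the two operators sit in a common interval of ratio $h=M/m$ — and then to interpolate back down with the L\"owner--Heinz inequality. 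The remaining delicacy is bookkeeping: checking that the squared loss $k^{2}$, reduced to $k^{\alpha}$, combines with the scalar maximum $M^{\alpha}+m^{\alpha}$ to reproduce the stated constant $\frac{(k^{\alpha/2}(M^\alpha+m^\alpha))^{2p/\alpha}}{16M^pm^p}$, and that $p\ge2\alpha$ (needed for Lemma~\ref{lema2}) and $\alpha\le2$ (needed for operator convexity of $t\mapsto t^{\alpha}$, hence for the Jensen steps) are precisely the hypotheses that make the argument close.
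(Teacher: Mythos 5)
Your argument is correct and its outer skeleton is exactly the paper's: reduce \eqref{th341} to a norm bound, apply Lemma~\ref{lema1}, then Lemma~\ref{lema2} with exponent $r=p/(2\alpha)\ge 1$, and finally establish $\bigl\|\Phi(\sum_{i=1}^n w_iA_i)^{\alpha}+(mM/k)^{\alpha}\Phi(P_t(\omega;\mathbb{A}))^{-\alpha}\bigr\|\le M^{\alpha}+m^{\alpha}$. Where you genuinely diverge is in how you obtain this last estimate. The paper simply invokes its inequality \eqref{rem331} with exponent $\alpha\in(1,2]$ (the L\"owner--Heinz consequence of Theorem~\ref{th33} with $p=2$), inverts it to get $\Phi(P_t(\omega;\mathbb{A}))^{-\alpha}\le k^{\alpha}\Phi(\sum_i w_iA_i)^{-\alpha}$ (its inequality \eqref{th343}), and then applies the scalar bound $s^{\alpha}+(mM)^{\alpha}s^{-\alpha}\le M^{\alpha}+m^{\alpha}$ to the \emph{single} operator $\Phi(\sum_i w_iA_i)\in[mI,MI]$. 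You instead re-derive an analogous $\alpha$-power comparison from scratch: Choi plus (P4) to get $\Phi(P_t(\omega;\mathbb{A}))^{-1}\le\Phi(\sum_i w_iA_i^{-1})$, the squaring trick via Lemma~\ref{lema1} to pass to the second power at the cost of $k^{2}$, L\"owner--Heinz to come back down to $\alpha$, and operator convexity of $s\mapsto s^{\alpha}$ (Choi--Davis--Jensen) to push the power inside $\Phi$ and inside the convex combination before applying the same scalar bound termwise. This buys independence from Theorem~\ref{th33} and \eqref{rem331} (you never need the $p=2$ case as an input), at the price of heavier machinery: the Jensen steps are actually dispensable, since the scalar bound can be applied directly to $\Phi(\sum_i w_iA_i)$ as the paper does. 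The same comparison applies to your treatment of \eqref{th342}.

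One bookkeeping slip to fix: in the squaring step the companion operator must be $(mM)^{-1}\Phi(\sum_i w_iA_i^{-1})^{-1}$, not $mM\,\Phi(\sum_i w_iA_i^{-1})^{-1}$, because the operators there live in $[M^{-1}I,\,m^{-1}I]$ and the relevant scalar inequality is $s+(mM)^{-1}s^{-1}\le m^{-1}+M^{-1}$ on $[M^{-1},m^{-1}]$; with that scaling Lemma~\ref{lema1} yields $\|\Phi(P_t(\omega;\mathbb{A}))^{-1}\Phi(\sum_i w_iA_i^{-1})^{-1}\|\le \tfrac{mM}{4}(m^{-1}+M^{-1})^{2}=k$, and your conclusion $\Phi(P_t(\omega;\mathbb{A}))^{-2}\le k^{2}\Phi(\sum_i w_iA_i^{-1})^{2}$ stands. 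This is a notational error, not a gap in the argument.
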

\begin{proof}
It follows from the inequality \eqref{rem331} that
\begin{equation}
\label{th343}
\Phi(P_t(\omega;\mathbb{A}))^{-\alpha}\le k^\alpha\Phi(\sum\limits_{i=1}^nw_iA_i)^{-\alpha} ~\text{for}~ 1<\alpha\le 2.
\end{equation}
It is known that \eqref{th341} is equivalent to
\[
\|[\Phi(\sum\limits_{i=1}^nw_iA_i)^\frac{p}{2}\Phi(P_t(\omega;\mathbb{A}))^{-\frac{p}{2}}\|\le \frac{(k^{\frac{\alpha}{2}}(M^\alpha+m^\alpha))^{\frac{p}{\alpha}}}{4M^{\frac{p}{2}}m^{\frac{p}{2}}}.
\]
If $p\ge2\alpha$, then 
\begin{align*}
\|M^\frac{p}{2}m^\frac{p}{2}&\Phi(\sum\limits_{i=1}^nw_iA_i)^\frac{p}{2}\Phi(P_t(\omega;\mathbb{A}))^{-\frac{p}{2}}\|\\
&\le \frac{1}{4}\|k^{\frac{p}{4}}\Phi(\sum\limits_{i=1}^nw_iA_i)^\frac{p}{2}+k^{-\frac{p}{4}}M^\frac{p}{2}m^\frac{p}{2}\Phi(P_t(\omega;\mathbb{A}))^{-\frac{p}{2}}\|^2~~(\text{by}~ \eqref{lem12})\\
&\le \frac{1}{4}\|k^{\frac{\alpha}{2}}\Phi(\sum\limits_{i=1}^nw_iA_i)^\alpha+k^{-\frac{\alpha}{2}}M^\alpha m^\alpha\Phi(P_t(\omega;\mathbb{A}))^{-\alpha}\|^\frac{p}{\alpha}~~(\mathrm{by}~ \eqref{lem2})\\
&\le \frac{1}{4}\|k^{\frac{\alpha}{2}}\Phi(\sum\limits_{i=1}^nw_iA_i)^\alpha+k^{\frac{\alpha}{2}}M^\alpha m^\alpha\Phi(\sum\limits_{i=1}^nw_iA_i)^{-\alpha}\|^\frac{p}{\alpha}~~(\mathrm{by}~ \rm \eqref{th343})\\
&\le \frac{k^{\frac{p}{2}}(M^\alpha+m^\alpha)^\frac{p}{\alpha}}{4}.
\end{align*}
The last inequality above holds as follows:  The condition $m\le A_i\le M$ implies that
$m^\alpha\le\Phi(\sum\limits_{i=1}^nw_iA_i)^\alpha\le M^\alpha$.
Therefore, the following inequalty
\[
\Phi(\sum\limits_{i=1}^nw_iA_i)^\alpha+M^\alpha m^\alpha\Phi(\sum\limits_{i=1}^nw_iA_i)^{-\alpha}\le M^\alpha+m^\alpha
\]
holds for any positive unital linear map $\Phi$ and a weight $\omega=(w_1,\cdots,w_n)\in \Delta_n$. Thus, \eqref{th341} holds.

By a similar argument, \eqref{th342} can be derived from the inequality \eqref{rem332}.
\end{proof}

Taking $\alpha=2$ in Theorem 3.4, we have
\begin{corollary}
Under the same conditions as in Theorem \ref{th34}, then for $p\ge 4$,
\begin{equation}
\label{cor341}
\Phi(\sum\limits_{i=1}^nw_iA_i)^p\le \frac{(k(M^2+m^2))^{p}}{16M^pm^p} \Phi(P_t(\omega;\mathbb{A}))^p
\end{equation}
and
\begin{equation}
\label{cor342}
\Phi(\sum\limits_{i=1}^nw_iA_i)^p\le \frac{(k(M^2+m^2))^{p}}{16M^pm^p} P_t(\omega;\Phi(\mathbb{A}))^p.
\end{equation}
\end{corollary}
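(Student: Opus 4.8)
The plan is to read off the corollary as the $\alpha=2$ instance of Theorem~\ref{th34}. The hypothesis $1<\alpha\le 2$ there includes the endpoint $\alpha=2$, and for that value the constraint $p\ge 2\alpha$ becomes $p\ge 4$; thus both \eqref{th341} and \eqref{th342} hold under the hypotheses of the corollary. All that is left is to simplify the constant: with $\alpha=2$ one has $k^{\alpha/2}=k$, $M^{\alpha}+m^{\alpha}=M^{2}+m^{2}$ and $2p/\alpha=p$, so $\frac{(k^{\alpha/2}(M^{\alpha}+m^{\alpha}))^{2p/\alpha}}{16M^{p}m^{p}}=\frac{(k(M^{2}+m^{2}))^{p}}{16M^{p}m^{p}}$, which turns \eqref{th341} into \eqref{cor341} and \eqref{th342} into \eqref{cor342}.

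The only thing worth verifying is that the proof of Theorem~\ref{th34} remains valid at the endpoint $\alpha=2$. Going through that argument: the auxiliary estimate \eqref{th343}, namely $\Phi(P_t(\omega;\mathbb{A}))^{-\alpha}\le k^{\alpha}\Phi(\sum_{i=1}^{n}w_iA_i)^{-\alpha}$, is obtained from \eqref{rem331}, which is stated for exponents in $(0,2]$ and hence applies with exponent $2$, by taking inverses; Lemma~\ref{lema1} is used with no constraint on $\alpha$; Lemma~\ref{lema2} is applied with the power $p/(2\alpha)$, which is $\ge 1$ precisely because $p\ge 2\alpha$; and the scalar inequality $\Phi(\sum_{i=1}^{n}w_iA_i)^{\alpha}+M^{\alpha}m^{\alpha}\Phi(\sum_{i=1}^{n}w_iA_i)^{-\alpha}\le M^{\alpha}+m^{\alpha}$ holds because $m^{2}\le\Phi(\sum_{i=1}^{n}w_iA_i)^{2}\le M^{2}$. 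Nothing in the chain degenerates at $\alpha=2$, so the specialization is legitimate.

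There is consequently no real obstacle here; the corollary is a bookkeeping consequence of Theorem~\ref{th34}, and the expected ``hard part'' amounts to no more than confirming the endpoint $\alpha=2$ is admissible. If a self-contained proof were preferred, one could instead repeat the norm chain in the proof of Theorem~\ref{th34} verbatim with $\alpha=2$ substituted throughout (using Lemmas~\ref{lema1} and~\ref{lema2} together with \eqref{rem331} at exponent $2$), but citing the theorem is shorter and avoids duplication.
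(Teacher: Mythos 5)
Your proposal is correct and matches the paper exactly: the paper derives this corollary by simply taking $\alpha=2$ in Theorem \ref{th34}, which is precisely your argument, including the constant simplification $k^{\alpha/2}=k$, $M^\alpha+m^\alpha=M^2+m^2$, $2p/\alpha=p$. Your extra check that the endpoint $\alpha=2$ is admissible in the theorem's proof is sound but not something the paper bothers to spell out.
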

\begin{remark}
If $\frac{M}{m}\le 2+\sqrt{3}$, we have $k(M^2+m^2)\le (M+m)^2$, so \eqref{cor341} and \eqref{cor342} are sharper than \eqref{th341}
and \eqref{th342} for $p\ge 4$, respectively.
\end{remark}

Note that we can also generalize the inequality \eqref{th341}, \eqref{th342}, \eqref{cor341} and \eqref{cor342} to the Karcher mean by \eqref{Lim} similarly.
\section{Weighted arithmetic and geometric means due to Lawson and Lim}
In 2006, Yamazaki \cite{Yamazaki} obtained a converse of arithmetic-geometric means inequality of $n$-operators via Kantorovich constant by induction on $n$. Soon after, Fujii el al. \cite{Fujii} also proved a stronger reverse inequality of the weighted arithmetic and geometric means due to Lawson and Lim of $n$-operators by the Kantorovich inequality in Theorem \ref{th21}.

In this section, we obtain the higher-power reverse inequalities of the weighted arithmetic and geometric means due to Lawson and Lim of $n$-operators, and several complements of the weighted geometric mean for $n$-variables have been established.
\begin{theorem}\label{th41}
For any positive integer $n\ge 2$, let $A_1, A_2, \cdots, A_n$ be positive
 invertible operators on a Hilbert space $\mathcal{H}$ such that $m\le A_i\le M$ for $i=1,2,\cdots,n$
and some scalars $0<m<M$. Then for $p\ge 2$,
\begin{equation}\label{th411}
A[n,t](A_1,\cdots, A_n)^p\le \frac{(m+M)^{2p}}{16m^pM^p}G[n,t](A_1,\cdots, A_n)^p.
\end{equation}
\end{theorem}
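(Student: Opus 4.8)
The plan is to transcribe the proof of Theorem \ref{th33} almost verbatim, with the positive unital linear map and power mean replaced by the Lawson--Lim arithmetic mean $A[n,t](A_1,\dots,A_n)=\sum_{i=1}^n t[n]_iA_i$ and geometric mean $G[n,t](A_1,\dots,A_n)$, and with the arithmetic--geometric--harmonic inequality \eqref{agh} playing the role that property $(\mathrm{P4})$ and Choi's inequality played there. First I would reduce to a norm inequality: for positive invertible operators $X,Y$ and a scalar $c>0$, the relation $X^p\le cY^p$ is equivalent to $\|X^{p/2}Y^{-p/2}\|\le\sqrt c$, because $\|X^{p/2}Y^{-p/2}\|^2=\|Y^{-p/2}X^pY^{-p/2}\|$ and a positive operator is dominated by $cI$ precisely when its norm is at most $c$. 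Taking $X=A[n,t](A_1,\dots,A_n)$, $Y=G[n,t](A_1,\dots,A_n)$ and $c=\frac{(m+M)^{2p}}{16m^pM^p}$, it suffices to prove
\[
\bigl\|A[n,t]^{p/2}\,G[n,t]^{-p/2}\bigr\|\le\frac{(m+M)^p}{4m^{p/2}M^{p/2}}.
\]

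The core of the argument is then the following chain. Multiplying the left-hand side by the positive scalar $M^{p/2}m^{p/2}$ and applying Lemma \ref{lema1} (inequality \eqref{lem12}) with $A=A[n,t]^{p/2}\ge0$ and $B=M^{p/2}m^{p/2}G[n,t]^{-p/2}\ge0$ bounds it by $\tfrac14\|A[n,t]^{p/2}+M^{p/2}m^{p/2}G[n,t]^{-p/2}\|^2$. Since $p\ge2$, i.e. $r:=p/2\ge1$, Lemma \ref{lema2} (inequality \eqref{lem2}), applied with $A=A[n,t]$ and $B=Mm\,G[n,t]^{-1}$, upgrades this to $\tfrac14\|A[n,t]+Mm\,G[n,t]^{-1}\|^p$. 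Now $\eqref{agh}$ gives $H[n,t]\le G[n,t]$, hence $G[n,t]^{-1}\le H[n,t]^{-1}=\sum_{i=1}^n t[n]_iA_i^{-1}$, so the last norm is at most $\tfrac14\bigl\|\sum_{i=1}^n t[n]_iA_i+Mm\sum_{i=1}^n t[n]_iA_i^{-1}\bigr\|^p=\tfrac14\bigl\|\sum_{i=1}^n t[n]_i(A_i+MmA_i^{-1})\bigr\|^p$. Finally, $m\le A_i\le M$ yields $A_i+MmA_i^{-1}\le M+m$ exactly as in \eqref{app}, so $0\le\sum_{i=1}^n t[n]_i(A_i+MmA_i^{-1})\le(M+m)I$ and the whole expression is at most $\tfrac14(M+m)^p$. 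Dividing through by $M^{p/2}m^{p/2}$ gives the displayed norm bound, hence \eqref{th411}.

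I do not anticipate a genuine obstacle here: the proof is a direct adaptation of Theorem \ref{th33}, and the only point requiring care is identifying the correct substitutes for $(\mathrm{P4})$ and Choi's inequality. In the present setting these are both absorbed into the single estimate $G[n,t]^{-1}\le\sum_{i=1}^n t[n]_iA_i^{-1}$, which is the harmonic--geometric half of \eqref{agh}, so the intermediate ``Choi'' step used in Theorem \ref{th33} is not needed separately. It should be emphasized that the hypothesis $p\ge2$ is used exactly to make $r=p/2\ge1$ admissible in Lemma \ref{lema2}; for $0<p<2$ one only recovers the weaker constant $\frac{(m+M)^{2p}}{4^pm^pM^p}$ via the L\"owner--Heinz inequality, paralleling the remark following Theorem \ref{th33}.
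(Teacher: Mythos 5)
Your proof is correct and follows essentially the same route as the paper's: the same reduction to a norm bound, the same applications of Lemma \ref{lema1} and Lemma \ref{lema2} (with $p\ge2$ used exactly where you say), and the same final scalar estimate $A_i+MmA_i^{-1}\le M+m$. The only (cosmetic) difference is that you invoke the harmonic--geometric half of \eqref{agh} to get $G[n,t]^{-1}\le\sum_i t[n]_iA_i^{-1}$ in one step, whereas the paper reaches the same bound via the self-duality $G[n,t](A_1,\dots,A_n)^{-1}=G[n,t](A_1^{-1},\dots,A_n^{-1})$ followed by the arithmetic--geometric inequality applied to the inverses, and packages the last estimate through the block-diagonal map $\Psi$.
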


\begin{proof}
Let a map $\Psi: \mathcal{B}(\mathcal{H})\oplus \cdots \oplus\mathcal{B}(\mathcal{H})
\mapsto \mathcal{B}(\mathcal{H})\oplus \cdots \oplus\mathcal{B}(\mathcal{H})$ be defined by
\[\Psi\left(
  \begin{array}{ccc}
    A_1  & &0 \\
     & \ddots& \\
     0& &A_n\\
  \end{array}
\right)=
\left(
  \begin{array}{ccc}
    t[n]_1A_1+\cdots+t[n]_nA_n  & &0 \\
     & \ddots& \\
     0& &t[n]_1A_1+\cdots+t[n]_nA_n\\
  \end{array}
\right).
\]
Then $\Psi$ is a positive linear map such that $\Psi(I)=I$.
The condition $0<m\le A_i\le M$ for $i=1,2,\cdots,n$ implies that
\begin{equation}\label{th412}
m\left(
  \begin{array}{ccc}
    I  & &0 \\
     & \ddots& \\
     0& &I\\
  \end{array}
\right)\le
\Psi\left(
  \begin{array}{ccc}
    A_1  & &0 \\
     & \ddots& \\
     0& &A_n\\
  \end{array}
\right)
\le M\left(
  \begin{array}{ccc}
    I  & &0 \\
     & \ddots& \\
     0& &I\\
  \end{array}
\right).
\end{equation}
By (2.3) in \cite{Lin2}, we have
\[
\Psi\left(
  \begin{array}{ccc}
    A_1  & &0 \\
     & \ddots& \\
     0& &A_n\\
  \end{array}
\right)+
Mm\Psi\left(
  \begin{array}{ccc}
    A_1^{-1}  & &0 \\
     & \ddots& \\
     0& &A_n^{-1}\\
  \end{array}
\right)
\le (M+m)\left(
  \begin{array}{ccc}
    I  & &0 \\
     & \ddots& \\
     0& &I\\
  \end{array}
\right).
\]
Thus,
\begin{equation}
\label{th413}
A[n,t](A_1,\cdots, A_n)+Mm A[n,t](A_1^{-1},\cdots, A_n^{-1})\le m+M.
\end{equation}
On the other hand, by computing, we deduce
\begin{align*}
\|M^\frac{p}{2}m^\frac{p}{2}&A[n,t](A_1,\cdots, A_n)^\frac{p}{2}G[n,t](A_1,\cdots, A_n)^{-\frac{p}{2}}\|\\
&\le \frac{1}{4}\|A[n,t](A_1,\cdots, A_n)^\frac{p}{2}+M^\frac{p}{2}m^\frac{p}{2}G[n,t](A_1,\cdots, A_n)^{-\frac{p}{2}}\|^2~(\text{by}~ \eqref{lem12})\\
&\le \frac{1}{4}\|A[n,t](A_1,\cdots, A_n)+MmG[n,t](A_1,\cdots, A_n)^{-1}\|^p~(\text{by}~ \eqref{lem2})\\
&= \frac{1}{4}\|A[n,t](A_1,\cdots, A_n)+MmG[n,t](A_1^{-1},\cdots, A_n^{-1})\|^p\\
&\le \frac{1}{4}\|A[n,t](A_1,\cdots, A_n)+MmA[n,t](A_1^{-1},\cdots, A_n^{-1})\|^p~(\text{by}~ \eqref{agh})\\
&\le \frac{(M+m)^p}{4}.~(\text{by}~ \eqref{th413})
\end{align*}

The equality above follows from the self-duality of the geometric mean (See \cite{Ando1, Fujii, Yamazaki}).
\end{proof}
Taking $p=2$,  \eqref{th411} implies the following corollary:
\begin{corollary}\label{cor41}
For any positive integer $n\ge 2$, let $A_1, A_2, \cdots, A_n$ be positive
 invertible operators on a Hilbert space $\mathcal{H}$ such that $m\le A_i\le M$ for $i=1,2,\cdots,n$
and some scalars $0<m<M$. Then
\begin{equation}\label{cor411}
A[n,t](A_1,\cdots, A_n)^2\le \frac{(m+M)^{4}}{16m^2M^2}G[n,t](A_1,\cdots, A_n)^2.
\end{equation}
\end{corollary}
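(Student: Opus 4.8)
The plan is to recognize that \eqref{cor411} is simply the case $p=2$ of Theorem \ref{th41}. Since $p=2$ meets the hypothesis $p\ge 2$ of that theorem, it suffices to set $p=2$ in \eqref{th411}: the right-hand side becomes $\frac{(m+M)^{4}}{16m^{2}M^{2}}G[n,t](A_1,\cdots, A_n)^{2}$, which is exactly the asserted bound. Thus no new idea is required, and there is no genuine obstacle to overcome — all the substantive work (the block-diagonal map $\Psi$, the Kantorovich-type estimate \eqref{th413}, Lemmas \ref{lema1} and \ref{lema2}, the self-duality of the Lawson–Lim geometric mean, and the arithmetic–geometric inequality \eqref{agh}) has already been carried out in the proof of Theorem \ref{th41}.

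If a self-contained argument is preferred, I would rerun that proof with the exponent fixed at $p=2$, where it simplifies noticeably. First, exactly as before, the positivity and unitality of $\Psi$ together with $0<m\le A_i\le M$ and inequality (2.3) of \cite{Lin2} give \eqref{th413}, namely
\[
A[n,t](A_1,\cdots, A_n)+Mm\,A[n,t](A_1^{-1},\cdots, A_n^{-1})\le m+M .
\]
Using the standard equivalence $\|X\|\le 1\iff X^{*}X\le I$, the inequality \eqref{cor411} is equivalent to $\big\|A[n,t](A_1,\cdots, A_n)\,G[n,t](A_1,\cdots, A_n)^{-1}\big\|\le \frac{(m+M)^{2}}{4mM}$. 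Applying Lemma \ref{lema1} to the positive operators $A[n,t](A_1,\cdots, A_n)$ and $Mm\,G[n,t](A_1,\cdots, A_n)^{-1}$ (note that with $p=2$ the half-power exponent is $1$, so Lemma \ref{lema2} is not even needed), then rewriting $G[n,t](A_1,\cdots, A_n)^{-1}=G[n,t](A_1^{-1},\cdots, A_n^{-1})$ by self-duality, then passing from the geometric to the arithmetic mean via \eqref{agh}, and finally invoking \eqref{th413}, one obtains
\[
\big\|Mm\,A[n,t](A_1,\cdots, A_n)\,G[n,t](A_1,\cdots, A_n)^{-1}\big\|\le \frac{(M+m)^{2}}{4},
\]
which, after dividing by $Mm$, is precisely the norm bound above.

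The only point that deserves a word of care is the translation between the operator inequality \eqref{cor411} and this norm estimate, but this is the same device ($X\le cY$ for $X,Y>0$ iff $\|X^{1/2}Y^{-1/2}\|\le\sqrt{c}$) already exploited throughout Section 3 and in the proof of Theorem \ref{th41}; hence the corollary follows with essentially no additional effort, and I expect no step to pose a real difficulty.
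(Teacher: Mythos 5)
Your proposal is correct and matches the paper exactly: the paper derives \eqref{cor411} simply by taking $p=2$ in Theorem \ref{th41}, which is your primary plan. Your optional self-contained rerun (noting that Lemma \ref{lema2} becomes superfluous when $p=2$) is a faithful specialization of the paper's proof of Theorem \ref{th41} and introduces no new route.
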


Note that if $t=\frac{1}{2}$, the inequality \eqref{cor411} reduces to Lin's result (See \cite[Theorem 3.2]{Lin1}).
\begin{theorem}\label{th42}
For any positive integer $n\ge 2$, let $A_1, A_2, \cdots, A_n$ be positive
 invertible operators on a Hilbert space $\mathcal{H}$ such that $m\le A_i\le M$ for $i=1,2,\cdots,n$
and some scalars $0<m<M$. Then for $1<\alpha\le 2$ and $p\ge2\alpha$,
\begin{equation}\label{th421}
A[n,t](A_1,\cdots, A_n)^p\le \frac{(k^{\frac{\alpha}{2}}(M^\alpha+m^\alpha))^{\frac{2p}{\alpha}}}{16M^pm^p}G[n,t](A_1,\cdots, A_n)^p,
\end{equation}
where $k=\frac{(m+M)^{2}}{4mM}$.
\end{theorem}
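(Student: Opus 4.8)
The plan is to mirror the proof of Theorem \ref{th34}, replacing the positive unital linear map by the Lawson--Lim averaging construction used in Theorem \ref{th41}; concretely, $\Phi(\sum_{i=1}^n w_iA_i)$ is replaced by $A[n,t](A_1,\cdots,A_n)$ and $\Phi(P_t(\omega;\mathbb{A}))$ by $G[n,t](A_1,\cdots,A_n)$, which I abbreviate to $A[n,t]$ and $G[n,t]$. Two preliminary observations are needed. First, since $A[n,t]=t[n]_1A_1+\cdots+t[n]_nA_n$ is a convex combination, the hypothesis $m\le A_i\le M$ forces $m\le A[n,t]\le M$, hence $m^\alpha\le A[n,t]^\alpha\le M^\alpha$, and therefore the scalar-type operator inequality
\[
A[n,t]^\alpha+M^\alpha m^\alpha A[n,t]^{-\alpha}\le M^\alpha+m^\alpha
\]
holds (it reduces to $f(x)=x+M^\alpha m^\alpha x^{-1}\le M^\alpha+m^\alpha$ for $x\in[m^\alpha,M^\alpha]$). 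Second, Corollary \ref{cor41}, i.e.\ \eqref{cor411}, gives $A[n,t]^2\le k^2G[n,t]^2$; applying the L\"{o}wner--Heinz inequality with exponent $\alpha/2\in(\tfrac12,1]$ (this uses $\alpha\le2$) and then taking inverses yields the analogue of \eqref{th343},
\[
G[n,t]^{-\alpha}\le k^\alpha A[n,t]^{-\alpha}.
\]

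Next I would reduce \eqref{th421} to the norm inequality
\[
\bigl\|A[n,t]^{p/2}G[n,t]^{-p/2}\bigr\|\le\frac{(k^{\alpha/2}(M^\alpha+m^\alpha))^{p/\alpha}}{4M^{p/2}m^{p/2}},
\]
using, exactly as in Theorems \ref{th33} and \ref{th34}, the equivalence ``$\|X\|\le1\iff X^*X\le I$'' applied (after rescaling) to $X=A[n,t]^{p/2}G[n,t]^{-p/2}$. To prove the norm bound I would run the chain from the proof of Theorem \ref{th34}: multiply the left-hand side by $M^{p/2}m^{p/2}$, insert the balancing factors $k^{p/4},k^{-p/4}$ and apply Lemma \ref{lema1}; then write $k^{p/4}A[n,t]^{p/2}=(k^{\alpha/2}A[n,t]^\alpha)^{p/(2\alpha)}$ and $k^{-p/4}M^{p/2}m^{p/2}G[n,t]^{-p/2}=(k^{-\alpha/2}M^\alpha m^\alpha G[n,t]^{-\alpha})^{p/(2\alpha)}$ and apply Lemma \ref{lema2} with $r=p/(2\alpha)\ge1$ (this uses $p\ge2\alpha$) to pull the exponent outside; then invoke $G[n,t]^{-\alpha}\le k^\alpha A[n,t]^{-\alpha}$ from the first paragraph together with monotonicity of the norm on positive operators; and finally the scalar-type inequality $A[n,t]^\alpha+M^\alpha m^\alpha A[n,t]^{-\alpha}\le M^\alpha+m^\alpha$. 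Collecting constants gives
\[
\bigl\|M^{p/2}m^{p/2}A[n,t]^{p/2}G[n,t]^{-p/2}\bigr\|\le\tfrac14k^{p/2}(M^\alpha+m^\alpha)^{p/\alpha},
\]
which is the required norm bound, and \eqref{th421} follows.

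I do not anticipate a genuine obstacle: every ingredient is already in place (Corollary \ref{cor41}, the L\"{o}wner--Heinz inequality, Lemmas \ref{lema1}--\ref{lema2}, and the boundedness $m\le A[n,t]\le M$), and the computation is parallel to that of Theorem \ref{th34}. The only points requiring attention are the exponent bookkeeping---that $r=p/(2\alpha)\ge1$ is precisely the hypothesis $p\ge2\alpha$ and that $\alpha/2\le1$ is precisely $\alpha\le2$---and the remark that $G[n,t]$ is positive invertible with $m\le G[n,t]\le M$, so that $G[n,t]^{-p/2}$ and $G[n,t]^{-\alpha}$ are legitimate; this last fact follows from the arithmetic--geometric--harmonic mean inequality \eqref{agh}.
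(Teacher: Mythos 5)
Your proposal is correct and follows essentially the same route as the paper: reduce \eqref{th421} to a norm bound on $A[n,t]^{p/2}G[n,t]^{-p/2}$, apply Lemma \ref{lema1} and then Lemma \ref{lema2} with $r=p/(2\alpha)\ge 1$, replace $G[n,t]^{-\alpha}$ by $k^{\alpha}A[n,t]^{-\alpha}$ via \eqref{cor411} and L\"{o}wner--Heinz, and finish with the scalar bound $A[n,t]^{\alpha}+M^{\alpha}m^{\alpha}A[n,t]^{-\alpha}\le M^{\alpha}+m^{\alpha}$. The only cosmetic difference is that you split the Kantorovich factor symmetrically as $k^{p/4},k^{-p/4}$ (as in Theorem \ref{th34}) while the paper places $k^{-p/2}$ entirely on one factor; the resulting constants are identical.
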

\begin{proof}
Let a map $\Psi: \mathcal{B}(\mathcal{H})\oplus \cdots \oplus\mathcal{B}(\mathcal{H})
\mapsto \mathcal{B}(\mathcal{H})\oplus \cdots \oplus\mathcal{B}(\mathcal{H})$ be defined as in the proof of Theorem \ref{th41}.
By \eqref{th412}, we have
\[
m^\alpha\left(
  \begin{array}{ccc}
    I  & &0 \\
     & \ddots& \\
     0& &I\\
  \end{array}
\right)\le
\Psi^\alpha\left(
  \begin{array}{ccc}
    A_1  & &0 \\
     & \ddots& \\
     0& &A_n\\
  \end{array}
\right)
\le M^\alpha\left(
  \begin{array}{ccc}
    I  & &0 \\
     & \ddots& \\
     0& &I\\
  \end{array}
\right),
\]
that is,
\[m^\alpha\le A[n,t](A_1,\cdots, A_n)^\alpha\le M^\alpha.\]
By (2.3) in \cite{Lin2}, we have
\begin{equation}\label{th423}
A[n,t](A_1,\cdots, A_n)^\alpha+M^\alpha m^\alpha A[n,t](A_1,\cdots, A_n)^{-\alpha}\le m^\alpha+M^\alpha.
\end{equation}
On the other hand, by \eqref{cor411},
\begin{equation}\label{th424}
k^{-\alpha}G[n,t](A_1,\cdots, A_n)^{-\alpha}\le A[n,t](A_1,\cdots, A_n)^{-\alpha}.
\end{equation}
By computing, we deduce
\begin{align*}
\|k^{-\frac{p}{2}}m^{\frac{p}{2}}M^{\frac{p}{2}}&A[n,t](A_1,\cdots, A_n)^\frac{p}{2}G[n,t](A_1,\cdots, A_n)^{-\frac{p}{2}}\|\\
&\le \frac{1}{4}\|A[n,t](A_1,\cdots, A_n)^\frac{p}{2}+k^{-\frac{p}{2}}m^{\frac{p}{2}}M^{\frac{p}{2}}G[n,t](A_1,\cdots, A_n)^{-\frac{p}{2}}\|^2~(\text{by}~ \eqref{lem12})\\
&\le \frac{1}{4}\|(A[n,t](A_1,\cdots, A_n)^\alpha+k^{-\alpha}m^{\alpha}M^{\alpha}G[n,t](A_1,\cdots, A_n)^{-\alpha})^{\frac{p}{2\alpha}}\|^2~(\text{by}~ \eqref{lem2})\\
&= \frac{1}{4}\|A[n,t](A_1,\cdots, A_n)^\alpha+k^{-\alpha}m^{\alpha}M^{\alpha}G[n,t](A_1,\cdots, A_n)^{-\alpha}\|^{\frac{p}{\alpha}}\\
&\le \frac{1}{4}\|A[n,t](A_1,\cdots, A_n)^\alpha+m^{\alpha}M^{\alpha}A[n,t](A_1,\cdots, A_n)^{-\alpha}\|^p~(\text{by}~ \eqref{th424})\\
&\le \frac{(M^{\alpha}+m^{\alpha})^p}{4}.~(\text{by}~ \eqref{th423})
\end{align*}
We obtain the desired result.
\end{proof}

Putting $\alpha=2$ in the inequality \eqref{th421}, which implies that
\begin{corollary}
Under the same conditions as in Theorem \ref{th42}, then for $p\ge 4$,
\begin{equation}
\label{cor421}
A[n,t](A_1,\cdots, A_n)^p\le \frac{(k(M^2+m^2))^{p}}{16M^pm^p}G[n,t](A_1,\cdots, A_n)^p.
\end{equation}
\end{corollary}
\begin{remark}

When $\frac{M}{m}\le 2+\sqrt{3}$, it is easy to see that \eqref{cor421} is sharper than \eqref{th411} for $p\ge 4$.
\end{remark}

Next, we show the complements of the weighted geometric mean due to Lawson and Lim by virtue of the following lemma.
\begin{lemma}
For any integer $n\ge 2$, let $A_1,A_2,\cdots,A_n$ be positive
 invertible operators in $\mathbb{P}$
such that $m\le A_i\le M$ for all $i=1,2,\cdots,n$ and some scalars $0<m\le M$.
Then
\begin{equation}
\label{lem411}
A[n,t](A^p_1,\cdots,A^p_n)\le K(m,M,p)A[n,t](A_1,\cdots, A_n)^p~for~all~p\ge1,
\end{equation}
where $K(m,M,p)=\frac{(p-1)^{p-1}}{p^p}\frac{(M^p-m^p)^p}{(M-m)(mM^p-Mm^p)^{p-1}}$ is the generalized Kantorovich constant.
\end{lemma}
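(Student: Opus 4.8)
The plan is to reduce the statement to the classical Mond--Pe\v{c}ari\'{c} reverse of Jensen's inequality, using the closed form $A[n,t](A_1,\dots,A_n)=\sum_{i=1}^n t[n]_i A_i$ recalled above, where the weights satisfy $t[n]_i\ge 0$ and $\sum_{i=1}^n t[n]_i=1$.

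First I would use a secant-line argument. Since $p\ge 1$, $f(s)=s^p$ is convex on $[m,M]$, hence $s^p\le \alpha s+\beta$ for all $s\in[m,M]$, where $\alpha=\frac{M^p-m^p}{M-m}$ and $\beta=\frac{Mm^p-mM^p}{M-m}$ are chosen so that the line agrees with $f$ at $s=m$ and $s=M$. Applying the functional calculus to each $A_i$ (which has spectrum in $[m,M]$) gives $A_i^p\le \alpha A_i+\beta I$. Summing against the weights $t[n]_i$ and using $\sum_i t[n]_i=1$ yields
\[
A[n,t](A_1^p,\dots,A_n^p)=\sum_{i=1}^n t[n]_i A_i^p\le \alpha\,A[n,t](A_1,\dots,A_n)+\beta I .
\]

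Second, I would convert this affine bound into a power bound. Put $X=A[n,t](A_1,\dots,A_n)$; since each $A_i$ satisfies $mI\le A_i\le MI$ and the weights sum to $1$, also $mI\le X\le MI$, so the spectrum of $X$ lies in $[m,M]$. Hence it is enough to establish the scalar inequality $\alpha x+\beta\le K(m,M,p)\,x^p$ for $x\in[m,M]$ and then apply the functional calculus to $X$ to obtain $\alpha X+\beta I\le K(m,M,p)\,X^p$; combined with the previous display this gives the lemma. The scalar inequality is the assertion $\max_{x\in[m,M]}\frac{\alpha x+\beta}{x^p}=K(m,M,p)$: at the endpoints the quotient equals $1$ (because $\alpha m+\beta=m^p$ and $\alpha M+\beta=M^p$), the unique interior critical point is $x_0=\frac{p(mM^p-Mm^p)}{(p-1)(M^p-m^p)}$, which lies in $(m,M)$, and substituting $x_0$ gives precisely $K(m,M,p)=\frac{(p-1)^{p-1}}{p^p}\frac{(M^p-m^p)^p}{(M-m)(mM^p-Mm^p)^{p-1}}$; since $K(m,M,p)\ge 1$ the maximum is attained at $x_0$ rather than at an endpoint.

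The only genuinely computational part is this last optimization, which is in any case the textbook derivation of the generalized Kantorovich constant; alternatively one can avoid it entirely by quoting the known reverse Jensen inequality $\Phi(A^p)\le K(m,M,p)\Phi(A)^p$ (valid for $p\ge 1$ and any unital positive linear map $\Phi$) and applying it to the unital positive linear map $\Phi\big(\mathrm{diag}(A_1,\dots,A_n)\big)=\sum_{i=1}^n t[n]_iA_i$ together with $\mathrm{diag}(A_1,\dots,A_n)^p=\mathrm{diag}(A_1^p,\dots,A_n^p)$, exactly as the block-operator map was used in the proof of Theorem \ref{th41}. No difficulty arises from the iterative definition of $A[n,t]$, since only its closed form $\sum_i t[n]_i A_i$ enters the argument.
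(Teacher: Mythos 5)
Your proof is correct. The paper's own proof is exactly your second, ``alternative'' route: it quotes the Mond--Pe\v{c}ari\'{c} reverse Jensen inequality $\Phi(A^p)\le K(m,M,p)\Phi(A)^p$ (Corollary 2.6 of Mi\'{c}i\'{c}--Pe\v{c}ari\'{c}--Seo) and applies it to the block-diagonal unital positive map $\Psi$ from the proof of Theorem \ref{th41}, whose action on $\mathrm{diag}(A_1,\dots,A_n)$ reproduces $A[n,t]$. Your primary route differs only in that it makes this cited ingredient self-contained: the secant-line bound $A_i^p\le\alpha A_i+\beta I$ followed by the optimization $\max_{x\in[m,M]}(\alpha x+\beta)/x^p=K(m,M,p)$ is precisely the textbook derivation of the generalized Kantorovich constant, and your computations check out --- $\alpha=\frac{M^p-m^p}{M-m}$, $\beta=\frac{Mm^p-mM^p}{M-m}$, the critical point $x_0=\frac{p(mM^p-Mm^p)}{(p-1)(M^p-m^p)}$ does lie in $(m,M)$ for $p>1$ and $m<M$, and evaluating there gives the stated constant. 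What your direct route buys is independence from the external reference and the simplification of working with the weighted sum $\sum_i t[n]_iA_i$ directly rather than through the block map; what the paper's citation buys is brevity. One cosmetic point: for $p=1$ the constant is $K=1$, the inequality is an identity, and the interior-critical-point argument is vacuous, so that boundary case should be dispatched separately.
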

\begin{proof}
By Corollary 2.6 in \cite{Micic},
\[\Phi(A^p)\le K(m,M,p)\Phi(A)^p~for~all~p\ge1.\]
Let the map $\Phi: \mathcal{B}(\mathcal{H})\oplus \cdots \oplus\mathcal{B}(\mathcal{H})
\mapsto \mathcal{B}(\mathcal{H})\oplus \cdots \oplus\mathcal{B}(\mathcal{H})$ be defined as $\Psi$ in the proof of Theorem \ref{th41}. Then for $p\ge1$,
 \[A[n,t](A^p_1,\cdots,A^p_n)\le K(m,M,p)A[n,t](A_1,\cdots, A_n)^p.\qedhere\]
\end{proof}

\begin{theorem} \label{th43}
For any integer $n\ge 2$, let $A_1,A_2,\cdots,A_n$ be positive invertible operators in $\mathbb{P}$
such that $m\le A_i\le M $ for all $i=1,2,\cdots,n$ and some scalars $0<m\le M$.
Then
\[
G[n,t](A^p_1,\cdots,A^p_n)\le K(m,M,p)\frac{(m+M)^{2p}}{4^pm^pM^p}G[n,t](A_1,\cdots, A_n)^p~~for~all~1<p\le2,
\]
and
\[
~G[n,t](A^p_1,\cdots,A^p_n)\le K(m,M,p)\frac{(m+M)^{2p}}{16m^pM^p}G[n,t](A_1,\cdots, A_n)^p~for~all~p\ge2.~~~~~~
\]
\end{theorem}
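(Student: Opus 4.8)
The plan is to chain three facts that are already available in the excerpt: the weighted arithmetic--geometric mean inequality \eqref{agh} applied to the powers $A_i^p$, the generalized Kantorovich bound \eqref{lem411} of the preceding Lemma, and the higher-power reverse arithmetic--geometric inequality for $A[n,t]$ (namely Theorem \ref{th41} when $p\ge2$, and Corollary \ref{cor41} together with the L\"{o}ewner-Heinz inequality when $1<p\le2$).

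First I would observe that, since each $A_i>0$, the mean inequality \eqref{agh} applied to $A_1^p,\dots,A_n^p$ gives
\[
G[n,t](A_1^p,\dots,A_n^p)\le A[n,t](A_1^p,\dots,A_n^p),
\]
and then \eqref{lem411}, which is valid for every $p\ge1$ and hence in both ranges $1<p\le2$ and $p\ge2$, yields
\[
G[n,t](A_1^p,\dots,A_n^p)\le K(m,M,p)\,A[n,t](A_1,\dots,A_n)^p .
\]

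Second, I would bound $A[n,t](A_1,\dots,A_n)^p$ from above by $G[n,t](A_1,\dots,A_n)^p$. For $p\ge2$ this is precisely inequality \eqref{th411} of Theorem \ref{th41}, and substituting it into the previous display produces the second asserted inequality with constant $K(m,M,p)\frac{(m+M)^{2p}}{16m^pM^p}$. For $1<p\le2$, I would instead start from Corollary \ref{cor41}, written as
\[
A[n,t](A_1,\dots,A_n)^2\le\Bigl(\tfrac{(m+M)^2}{4mM}\Bigr)^2 G[n,t](A_1,\dots,A_n)^2 ,
\]
and raise both sides to the power $p/2\le1$ by the L\"{o}ewner-Heinz inequality (the positive scalar factor commutes with everything and is unaffected), obtaining $A[n,t](A_1,\dots,A_n)^p\le\frac{(m+M)^{2p}}{4^pm^pM^p}G[n,t](A_1,\dots,A_n)^p$; combined with the first step this gives the first asserted inequality.

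There is essentially no analytic obstacle, since every ingredient has already been proved in the excerpt; the only care required is bookkeeping of the constants and keeping straight which power range forces Theorem \ref{th41} versus the L\"{o}ewner-Heinz interpolation of Corollary \ref{cor41}. The one point worth emphasising in the write-up is that the first step uses only positivity of the $A_i$ (through \eqref{agh}) and the scalar bounds $m\le A_i\le M$ (through \eqref{lem411}), so no separate hypothesis on the operators $A_i^p$ is needed.
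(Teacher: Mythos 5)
Your proposal is correct and follows essentially the same route as the paper: chain the arithmetic--geometric mean inequality \eqref{agh} with the generalized Kantorovich bound \eqref{lem411} to get $G[n,t](A_1^p,\dots,A_n^p)\le K(m,M,p)\,A[n,t](A_1,\dots,A_n)^p$, then invoke \eqref{th411} for $p\ge2$ and the L\"{o}wner--Heinz interpolation of \eqref{cor411} for $1<p\le2$. The bookkeeping of constants matches the paper's proof exactly, so there is nothing to add.
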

\begin{proof}
By the arithmetic-geometric mean inequality and \eqref{lem411}, it follows that
\begin{equation}
\label{th431}
G[n,t](A^p_1,\cdots,A^p_n)\le A[n,t](A^p_1,\cdots,A^p_n)\le K(m,M,p)A[n,t](A_1,\cdots, A_n)^p~for~p\ge1.
\end{equation}

For $p\in(1,2]$, it follows from \eqref{cor411} and the L\"{o}ewner-Heinz inequality that
\[A[n,t](A_1,\cdots, A_n)^p\le \left(\frac{(m+M)^{2}}{4mM}\right)^pG[n,t](A_1,\cdots, A_n)^p.\]
Combining these inequalities above, we have
\[G[n,t](A^p_1,\cdots,A^p_n)\le K(m,M,p)\frac{(m+M)^{2p}}{4^pm^pM^p}G[n,t](A_1,\cdots, A_n)^p.\]

For $p\in[2,\infty)$, from \eqref{th411} and \eqref{th431}, we obtain
\[G[n,t](A^p_1,\cdots,A^p_n)\le K(m,M,p)\frac{(m+M)^{2p}}{16m^pM^p}G[n,t](A_1,\cdots, A_n)^p.\qedhere\]
\end{proof}

In the following remark, we present the case of $p\ge 1$ about \eqref{th232} for the Ando-Li-Mahthias geometric mean.
\begin{remark}
Let $t=\frac{1}{2}$ in Theorem \ref{th43}. Then
\[
G_{ALM}(A^p_1,\cdots,A^p_n)\le K(m,M,p)\frac{(m+M)^{2p}}{4^pm^pM^p}G_{ALM}(A_1,\cdots, A_n)^p~~for~all~1<p\le2
\]
and
\[
~G_{ALM}(A^p_1,\cdots,A^p_n)\le K(m,M,p)\frac{(m+M)^{2p}}{16m^pM^p}G_{ALM}(A_1,\cdots, A_n)^p~~for~all~p\ge2.~~~~~
\]
\end{remark}

\begin{theorem} \label{th44}
For any integer $n\ge 2$, let $A_1,A_2,\cdots,A_n$ be positive invertible operators  in $\mathbb{P}$
such that $m\le A_i\le M $ for all $i=1,2,\cdots,n$ and some scalars $0<m\le M$.
Then
\[
G[n,t](A^p_1,\cdots,A^p_n)^\frac{1}{p}\le K\left(m^q,M^q,\frac{p}{q}\right)^\frac{1}{p}\left(\frac{(m^q+M^q)^{2}}{4m^qM^q}\right)^\frac{1}{q}G[n,t](A^q_1,\cdots,A^q_n)^\frac{1}{q}~~~
\]
for all $1<\frac{p}{q}\le2$ and $p\ge1$,
and
\[
~~G[n,t](A^p_1,\cdots,A^p_n)^\frac{1}{p}\le 4^{-\frac{2}{p}} K\left(m^q,M^q,\frac{p}{q}\right)^\frac{1}{p}\left(\frac{(m^q+M^q)^{2}}{m^qM^q}\right)^\frac{1}{q}G[n,t](A^q_1,\cdots,A^q_n)^\frac{1}{q}
\]
for all $\frac{p}{q}\ge2$ and $p\ge1$.
\end{theorem}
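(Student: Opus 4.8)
The plan is to reduce Theorem \ref{th44} to the already-established Theorem \ref{th43} by a standard substitution trick: apply Theorem \ref{th43} not to $A_1,\dots,A_n$ but to $B_i := A_i^q$, which satisfy $m^q \le B_i \le M^q$, and with exponent $p/q$ in place of $p$. First I would observe that since $x \mapsto x^q$ is operator monotone-ish only in a restricted range, but here $q$ is just a positive real and we only need the scalar bounds $m \le A_i \le M \Rightarrow m^q \le A_i^q \le M^q$, which holds because the $A_i$ commute with scalars and $t \mapsto t^q$ is increasing on $(0,\infty)$. So the hypotheses of Theorem \ref{th43}, applied to $(B_1,\dots,B_n)$ with lower/upper bounds $m^q, M^q$ and power $P := p/q$, are met whenever $1 < P \le 2$ (first case) or $P \ge 2$ (second case).

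Next I would write out what Theorem \ref{th43} gives for this input. In the first case ($1 < p/q \le 2$):
\[
G[n,t]\bigl(B_1^{p/q},\dots,B_n^{p/q}\bigr) \le K\!\left(m^q,M^q,\tfrac{p}{q}\right)\frac{(m^q+M^q)^{2p/q}}{4^{p/q}m^{p}M^{p}}\,G[n,t](B_1,\dots,B_n)^{p/q}.
\]
Since $B_i^{p/q} = A_i^p$ and $B_i = A_i^q$, the left side is $G[n,t](A_1^p,\dots,A_n^p)$ and the right side involves $G[n,t](A_1^q,\dots,A_n^q)^{p/q}$. Now I would raise both sides to the power $1/p$; this is legitimate and order-preserving by the Löwner–Heinz inequality (the map $X \mapsto X^{1/p}$ is operator monotone for $p \ge 1$), which is exactly where the hypothesis $p \ge 1$ is used. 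Taking $1/p$-th powers turns the constant $K(m^q,M^q,p/q)\,(m^q+M^q)^{2p/q}\,4^{-p/q}\,m^{-p}M^{-p}$ into $K(m^q,M^q,p/q)^{1/p}\bigl((m^q+M^q)^2/(4m^qM^q)\bigr)^{1/q}$, matching the claimed bound, and turns $G[n,t](A_1^q,\dots,A_n^q)^{p/q}$ into $G[n,t](A_1^q,\dots,A_n^q)^{1/q}$. The second case ($p/q \ge 2$) is identical but starts from the second inequality of Theorem \ref{th43}, which has $16$ in place of $4^{p/q}$; after taking $1/p$-th powers the $16^{-1/p} = 4^{-2/p}$ factor appears and $(m^q+M^q)^{2p/q}m^{-p}M^{-p}$ becomes $\bigl((m^q+M^q)^2/(m^qM^q)\bigr)^{1/q}$, again matching.

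I expect the only genuine subtlety — and the place a careful reader will want reassurance — is the legitimacy of raising an operator inequality $X \le Y$ to the fractional power $1/p$: this requires $0 < 1/p \le 1$, i.e. $p \ge 1$, which is precisely the standing hypothesis, so the Löwner–Heinz inequality applies directly. Everything else is bookkeeping: verifying that the exponents in the Kantorovich-type constant recombine correctly after the substitution $B_i = A_i^q$ and the $1/p$-th power, and noting that $G[n,t]$ of the $B_i$'s is literally $G[n,t]$ of the $A_i^q$'s. No new inequality about $G[n,t]$ itself is needed. I would therefore present the proof as: apply Theorem \ref{th43} to $A_i^q$ with exponent $p/q$, rewrite the powers, and take $1/p$-th powers using Löwner–Heinz — roughly a half-page once the constants are expanded.
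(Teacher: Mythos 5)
Your proposal is correct and is essentially the paper's own argument: the paper simply writes out the chain $G[n,t](A_1^p,\dots,A_n^p)\le A[n,t]((A_1^q)^{p/q},\dots,(A_n^q)^{p/q})\le K(m^q,M^q,\tfrac{p}{q})A[n,t](A_1^q,\dots,A_n^q)^{p/q}\le\cdots$ explicitly (i.e., it inlines the proof of Theorem \ref{th43} with $A_i$ replaced by $A_i^q$ and exponent $p/q$) and then takes $1/p$-th powers by L\"{o}wner--Heinz, exactly as you do. Invoking Theorem \ref{th43} as a black box on $B_i=A_i^q$ is just a cleaner packaging of the same substitution, and your bookkeeping of the constants and of the spectral bound $m^q\le A_i^q\le M^q$ is accurate.
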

\begin{proof}
For each $0<q\le p$, it follows from the arithmetic-geometric mean inequality \eqref{agh} and \eqref{lem411} that
\begin{equation}
\label{th441}
\begin{split}
G[n,t](A^p_1,\cdots,A^p_n)&\le A[n,t](A^p_1,\cdots,A^p_n)\\
&= A[n,t]((A^q_1)^\frac{p}{q},\cdots,(A^q_n)^\frac{p}{q})\\
&\le K\left(m^q,M^q,\frac{p}{q}\right)A[n,t](A^q_1,\cdots, A^q_n)^\frac{p}{q}.
\end{split}
\end{equation}
On the other hand, for $1<\frac{p}{q}\le2$, from \eqref{cor411} and $m^qI\le A_i^q\le M^qI$, it follows that
\[
A[n,t](A^q_1,\cdots, A^q_n)^\frac{p}{q}\le \left(\frac{(m^q+M^q)^{2}}{4m^qM^q}\right)^\frac{p}{q}G[n,t](A^q_1,\cdots,A^q_n)^\frac{p}{q}.
\]
Combining the two inequalities above, we obtain
\[
G[n,t](A^p_1,\cdots,A^p_n)\le K\left(m^q,M^q,\frac{p}{q}\right)\left(\frac{(m^q+M^q)^{2}}{4m^qM^q}\right)^\frac{p}{q}G[n,t](A^q_1,\cdots,A^q_n)^\frac{p}{q}.
\]
By the L\"{o}ewner-Heinz inequality and $p\ge1$, it follows that
\[
G[n,t](A^p_1,\cdots,A^p_n)^\frac{1}{p}\le K\left(m^q,M^q,\frac{p}{q}\right)^\frac{1}{p}\left(\frac{(m^q+M^q)^{2}}{4m^qM^q}\right)^\frac{1}{q}G[n,t](A^q_1,\cdots,A^q_n)^\frac{1}{q}.
\]
Similarly, for all $\frac{p}{q}\ge2$, from \eqref{th411} we have
\[
A[n,t](A^q_1,\cdots, A^q_n)^\frac{p}{q}\le 4^{-2}\left(\frac{(m^q+M^q)^{2}}{m^qM^q}\right)^\frac{p}{q}G[n,t](A^q_1,\cdots,A^q_n)^\frac{p}{q}.
\]
Combining with \eqref{th441}, we obtain
\[
G[n,t](A^p_1,\cdots,A^p_n)\le 4^{-2}K\left(m^q,M^q,\frac{p}{q}\right)\left(\frac{(m^q+M^q)^{2}}{m^qM^q}\right)^\frac{p}{q}G[n,t](A^q_1,\cdots,A^q_n)^\frac{p}{q}.
\]
It follows from $p\ge1$ that
\[
G[n,t](A^p_1,\cdots,A^p_n)^\frac{1}{p}\le 4^{-\frac{2}{p}}K\left(m^q,M^q,\frac{p}{q}\right)^\frac{1}{p}\left(\frac{(m^q+M^q)^{2}}{m^qM^q}\right)^\frac{1}{q}G[n,t](A^q_1,\cdots,A^q_n)^\frac{1}{q}.
\]
This completes the proof.
\end{proof}
\begin{remark}
Although $\lim_{q\rightarrow 0}G[n,t](A^q_1,\cdots,A^q_n)^\frac{1}{q}=\diamondsuit(\hat{\omega}; A_1,\cdots, A_n)$ (the chaotic geometric mean),
$\lim_{q\rightarrow 0}\left(\frac{(m^q+M^q)^{2}}{4m^qM^q}\right)^\frac{1}{q}=1$ and $\lim_{q\rightarrow 0}K\left(m^q,M^q,\frac{p}{q}\right)^\frac{1}{p}=S(h^p)^\frac{1}{p}$ (See \cite{Fujii1}), we can not obtain
\[
G[n,t](A^p_1,\cdots,A^p_n)^\frac{1}{p}\le S(h^p)^\frac{1}{p}\diamondsuit(\hat{\omega}; A_1,\cdots, A_n)
\]
for all $p\ge1$ from Theorem \ref{th44}.
\end{remark}

\section{Comparisons between the weighted Karcher mean and the Lawson-Lim geometric mean}
In the final section, we make comparisons between the weighted Karcher mean and the Lawson-Lim geometric mean for higher power.
This is a fascinating work because the order relation can be preserved between higher power operators by the Kantorovich constant.
\begin{lemma}\label{lem51}\cite{Fujii2}
Let $0<m\le A\le M$ and $A\le B$. Then
\[
A^2\le \frac{(M+m)^2}{4Mm}B^2.
\]
\end{lemma}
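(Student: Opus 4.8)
The plan is to avoid the norm estimate of Lemma~\ref{lema1} altogether and instead derive Lemma~\ref{lem51} from two elementary inequalities for scalar quadratics, glued together by the hypothesis $A\le B$ and the continuous functional calculus. The bridging object is the affine function $L(t)=(M+m)t-Mm$, i.e.\ the secant line of $t\mapsto t^{2}$ joining $(m,m^{2})$ to $(M,M^{2})$: since $t\mapsto t^{2}$ is convex it lies below $L$ on $[m,M]$, and one checks that $L$ in turn lies below $\frac{(M+m)^{2}}{4Mm}\,t^{2}$ on the whole real line. Placing $L$ between $t^{2}$ and its Kantorovich multiple is the crux of the matter.

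First I would record the two scalar facts. On $[m,M]$ one has $(M+m)t-Mm-t^{2}=(t-m)(M-t)\ge0$; since $\sigma(A)\subseteq[m,M]$, applying the functional calculus to $A$ gives
\[
A^{2}\le (M+m)A-MmI .
\]
Next, $q(t):=\frac{(M+m)^{2}}{4Mm}\,t^{2}-(M+m)t+Mm$ has discriminant $(M+m)^{2}-4\cdot\frac{(M+m)^{2}}{4Mm}\cdot Mm=0$ and positive leading coefficient, so $q(t)\ge0$ for \emph{every} real $t$ (its double root being the harmonic mean $\tfrac{2Mm}{M+m}$). Applying the functional calculus to the self-adjoint operator $B$ — and here no spectral bound on $B$ is needed, which is fortunate since the hypothesis supplies none — yields
\[
(M+m)B-MmI\le \frac{(M+m)^{2}}{4Mm}\,B^{2}.
\]

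It then only remains to chain the estimates: from $A\le B$ and $M+m>0$ we get $(M+m)A-MmI\le (M+m)B-MmI$, whence
\[
A^{2}\le(M+m)A-MmI\le(M+m)B-MmI\le\frac{(M+m)^{2}}{4Mm}\,B^{2},
\]
which is precisely the assertion of Lemma~\ref{lem51}. I do not expect a real obstacle here; the only points needing attention are that the constant $\frac{(M+m)^{2}}{4Mm}$ is exactly the value making the discriminant of $q$ vanish, and that the hypothesis $A\le B$ is used once only, at the single linear step in the middle.

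For completeness I would also explain why the superficially natural route through Lemma~\ref{lema1} is not the one to follow. Since $A^{2}\le\frac{(M+m)^{2}}{4Mm}B^{2}$ is equivalent to $\|AB^{-1}\|^{2}\le\frac{(M+m)^{2}}{4Mm}$, one might try to estimate $\|AB^{-1}\|=\frac{1}{Mm}\|A\cdot MmB^{-1}\|\le\frac{1}{4Mm}\|A+MmB^{-1}\|^{2}\le\frac{(M+m)^{2}}{4Mm}$, using $A+MmB^{-1}\le A+MmA^{-1}\le(M+m)I$; but this only bounds $\|AB^{-1}\|$, not $\|AB^{-1}\|^{2}$, and hence overshoots by a square. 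It is the scalar-quadratic argument above that produces the sharp constant.
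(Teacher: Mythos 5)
Your proof is correct. Note that the paper does not prove this lemma at all but simply cites it from \cite{Fujii2}; the argument you give --- bounding $A^2$ from above by the secant line $(M+m)t-Mm$ on $[m,M]$, bounding that same affine expression evaluated at $B$ from above by $\frac{(M+m)^2}{4Mm}B^2$ via the nonnegative quadratic $q$ with vanishing discriminant, and using $A\le B$ only at the affine middle step --- is precisely the standard proof from that reference, so there is nothing to bridge. Your closing observation is also accurate: the route through the norm inequality $\|AB\|\le\frac14\|A+B\|^2$ only controls $\|AB^{-1}\|$ rather than $\|AB^{-1}\|^2$ and would yield the squared constant $\bigl(\tfrac{(M+m)^2}{4Mm}\bigr)^2$, which is why the scalar-quadratic argument is the right one here.
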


\begin{lemma}\label{lem52}\cite{Furuta1}
Let $A$ and $B$ be positive invertible operators on a Hilbert space $\mathcal{H}$ satisfying
$B\ge A>0$ and $0<m\le A\le M$. Then
\[
\left(\frac{M}{m}\right)^{p-1}B^p\ge\mathrm{K}(m,M,p)B^p\ge A^p
\]
holds for any $p\ge 1$, where $\mathrm{K}(m,M,p)$ is the generalized Kantorovich constant or the Ky Fan-Furuta constant and
\[\left(\frac{M}{m}\right)^{p-1}\ge \mathrm{K}(m,M,p).\]
\end{lemma}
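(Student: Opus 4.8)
The plan is to reduce the statement to the single operator inequality $\mathrm{K}(m,M,p)B^p\ge A^p$: once that is established, the first displayed inequality $\left(\frac{M}{m}\right)^{p-1}B^p\ge\mathrm{K}(m,M,p)B^p$ follows at once, since $\left(\frac{M}{m}\right)^{p-1}\ge\mathrm{K}(m,M,p)$ is a standard scalar property of the generalized Kantorovich constant and $B^p\ge0$. To prove $\mathrm{K}(m,M,p)B^p\ge A^p$ I would run the Mond--Pe\v{c}ari\'{c} linearization in the form used by Furuta: a \emph{secant (upper) bound} for $t\mapsto t^p$ applied to the operator $A$, which is the one carrying the spectral localization $m\le A\le M$, together with a \emph{global lower bound} $\mathrm{K}(m,M,p)s^p\ge c_1 s-c_2$ valid for every $s>0$ applied to $B$, the two being joined by the hypothesis $A\le B$.

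In detail: put $c_1=\dfrac{M^p-m^p}{M-m}$ and $c_2=\dfrac{mM^p-Mm^p}{M-m}$, so that $c_1>0$ and $c_2\ge0$ for $p\ge1$, and the chord of the convex function $t\mapsto t^p$ over $[m,M]$ is exactly $t\mapsto c_1 t-c_2$. Convexity gives $t^p\le c_1 t-c_2$ for $t\in[m,M]$, so the spectral theorem applied to $A$ (using $mI\le A\le MI$) yields $A^p\le c_1 A-c_2 I$; since $c_1>0$, the hypothesis $A\le B$ then gives $c_1 A-c_2 I\le c_1 B-c_2 I$. It remains to dominate the last expression by $\mathrm{K}(m,M,p)B^p$, and here no spectral bound on $B$ is available, only $B>0$.

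For that final step I would prove the scalar inequality $c_1 s-c_2\le\mathrm{K}(m,M,p)s^p$ for all $s>0$, equivalently that $s\mapsto(c_1 s-c_2)/s^p$ has supremum $\mathrm{K}(m,M,p)$ on $(0,\infty)$. A one-line differentiation shows this ratio has a unique critical point $s_\ast=\dfrac{pc_2}{(p-1)c_1}$, which is its global maximum (the ratio tends to $-\infty$ as $s\to0^+$ and to $0^+$ as $s\to\infty$), and substituting $s_\ast$ reproduces precisely $\dfrac{(p-1)^{p-1}}{p^p}\cdot\dfrac{c_1^{\,p}}{c_2^{\,p-1}}=\dfrac{(p-1)^{p-1}}{p^p}\cdot\dfrac{(M^p-m^p)^p}{(M-m)(mM^p-Mm^p)^{p-1}}=\mathrm{K}(m,M,p)$, the constant in the statement. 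Feeding $c_1 s-c_2\le\mathrm{K}(m,M,p)s^p$ into the spectral theorem for $B>0$ gives $c_1 B-c_2 I\le\mathrm{K}(m,M,p)B^p$, and the chain $A^p\le c_1 A-c_2 I\le c_1 B-c_2 I\le\mathrm{K}(m,M,p)B^p$ completes the argument; the case $p=1$ is the degenerate one with $\mathrm{K}(m,M,1)=1$, where the claim is just $A\le B$.

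The one genuinely delicate point — and the reason the two linear estimates cannot be swapped — is that the localization $m\le A\le M$ is hypothesized only for the \emph{smaller} operator $A$: convexity (an upper bound) is therefore legitimately applied to $A$, while for $B$ one has only a lower bound, forcing the use of an estimate that holds for all positive operators, namely $c_1 B-c_2 I\le\mathrm{K}(m,M,p)B^p$. Secondary care is needed to confirm $c_2\ge0$, that $s_\ast$ is indeed the global maximizer of the ratio on $(0,\infty)$ and not merely a critical point, and the auxiliary scalar estimate $\left(\frac{M}{m}\right)^{p-1}\ge\mathrm{K}(m,M,p)$, which is a well-known elementary property of the generalized Kantorovich constant and may simply be quoted.
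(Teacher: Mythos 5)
Your proof is correct and complete. The paper itself gives no argument for this lemma --- it is quoted verbatim from Furuta \cite{Furuta1} --- so there is nothing internal to compare against; what you have written is a faithful, self-contained reconstruction of Furuta's own Mond--Pe\v{c}ari\'{c}-type argument. All the computations check out: the chord of $t\mapsto t^p$ over $[m,M]$ is indeed $c_1t-c_2$ with $c_1=\frac{M^p-m^p}{M-m}>0$ and $c_2=\frac{mM^p-Mm^p}{M-m}=\frac{mM(M^{p-1}-m^{p-1})}{M-m}\ge 0$; the maximum of $s\mapsto(c_1s-c_2)/s^p$ on $(0,\infty)$ is attained at $s_\ast=\frac{pc_2}{(p-1)c_1}$ with value $\frac{(p-1)^{p-1}}{p^p}\,\frac{c_1^{\,p}}{c_2^{\,p-1}}=\mathrm{K}(m,M,p)$; and the chain $A^p\le c_1A-c_2I\le c_1B-c_2I\le \mathrm{K}(m,M,p)B^p$ uses the spectral localization only where it is available (on $A$ for the upper secant bound, and only positivity of $B$ for the global lower bound), which is exactly the delicate point you flag. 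The degenerate case $p=1$ and the quoted scalar estimate $\left(\frac{M}{m}\right)^{p-1}\ge\mathrm{K}(m,M,p)$ (easily verified, e.g.\ at $p=2$ it reads $\frac{(M+m)^2}{4mM}\le\frac{M}{m}$) are handled appropriately. No gaps.
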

\begin{theorem}
For any positive integer $n\ge 2$, let $A_1, A_2, \cdots, A_n$ be positive
invertible operators on a Hilbert space $\mathcal{H}$ such that $m\le A_i\le M$ for $i=1,2,\cdots,n$
and some scalars $0<m<M$. Then
\begin{equation}
\label{th511}
\begin{split}
\left(\frac{(m+M)^{2}}{4mM}\right)^{-3}G[n,t](A_1,\cdots, A_n)^2&\le G_K(\hat{\omega}, A_1,\cdots, A_n)^2\\
&\le \left(\frac{(m+M)^{2}}{4mM}\right)^3G[n,t](A_1,\cdots, A_n)^2,
\end{split}
\end{equation}
where $\hat{\omega}=\left(t[n]_1, t[n]_2, \cdots, t[n]_n\right)$. 
\end{theorem}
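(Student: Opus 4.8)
The plan is to sandwich $G_K(\hat{\omega};\mathbb{A})$ between $G[n,t](\mathbb{A})$ up to Kantorovich factors, and then square the resulting order relations using Lemma \ref{lem51}. First I would recall that the Lawson--Lim geometric mean and the Karcher mean both lie between the weighted harmonic and arithmetic means built from the \emph{same} weight vector $\hat{\omega}=(t[n]_1,\dots,t[n]_n)$: indeed $H[n,t](\mathbb{A})\le G[n,t](\mathbb{A})\le A[n,t](\mathbb{A})$ by \eqref{agh}, and by \eqref{akh} (with $\omega=\hat\omega$) also $H[n,t](\mathbb{A})\le G_K(\hat\omega;\mathbb{A})\le A[n,t](\mathbb{A})$, since $A[n,t](\mathbb{A})=\sum_i t[n]_iA_i$ and $H[n,t](\mathbb{A})=(\sum_i t[n]_iA_i^{-1})^{-1}$. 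Combining with Theorem \ref{th21}, namely $A[n,t](\mathbb{A})\le k\,G[n,t](\mathbb{A})$ where $k=\frac{(m+M)^2}{4mM}$, and the dual statement $k^{-1}H[n,t](\mathbb{A})\ge$ \dots\ — more precisely $H[n,t](\mathbb{A})\ge k^{-1}G[n,t](\mathbb{A})$, obtained by applying \eqref{th211} to $A_i^{-1}$ and inverting — one gets the two-sided \emph{linear} estimate
\[
k^{-1}G[n,t](\mathbb{A})\le G_K(\hat\omega;\mathbb{A})\le k\,G[n,t](\mathbb{A}),
\]
and symmetrically $k^{-1}G_K(\hat\omega;\mathbb{A})\le G[n,t](\mathbb{A})\le k\,G_K(\hat\omega;\mathbb{A})$.

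Next I would upgrade these to the squared inequalities. For the upper bound, set $X=G_K(\hat\omega;\mathbb{A})$ and $Y=k\,G[n,t](\mathbb{A})$, so $X\le Y$; since $m\le A_i\le M$ forces $m\le X\le M$ (as $X$ lies between $H[n,t]$ and $A[n,t]$, both of which are bounded between $m$ and $M$), Lemma \ref{lem51} gives $X^2\le k\,Y^2=k\cdot k^2 G[n,t](\mathbb{A})^2=k^3G[n,t](\mathbb{A})^2$. That is the right-hand inequality of \eqref{th511}. For the left-hand inequality, apply Lemma \ref{lem51} the other way: from $G[n,t](\mathbb{A})\le k\,G_K(\hat\omega;\mathbb{A})$ and $m\le G[n,t](\mathbb{A})\le M$, conclude $G[n,t](\mathbb{A})^2\le k\cdot k^2 G_K(\hat\omega;\mathbb{A})^2=k^3G_K(\hat\omega;\mathbb{A})^2$, i.e. $k^{-3}G[n,t](\mathbb{A})^2\le G_K(\hat\omega;\mathbb{A})^2$.

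The main obstacle I anticipate is justifying the linear two-sided bound $k^{-1}G[n,t](\mathbb{A})\le G_K(\hat\omega;\mathbb{A})\le k\,G[n,t](\mathbb{A})$ cleanly — in particular making sure the \emph{same} weight vector $\hat\omega$ appears in the Karcher equation \eqref{Kar} and in the coefficients $t[n]_i$ of the Lawson--Lim arithmetic/harmonic means, so that the comparison of $G_K(\hat\omega;\mathbb{A})$ with $G[n,t](\mathbb{A})$ goes through the identical sandwich between $\sum_i t[n]_iA_i$ and $(\sum_i t[n]_iA_i^{-1})^{-1}$. One must verify the harmonic-side reverse, $H[n,t](\mathbb{A})\ge k^{-1}G[n,t](\mathbb{A})$: applying Theorem \ref{th21} to $(A_1^{-1},\dots,A_n^{-1})$ (which satisfy $M^{-1}\le A_i^{-1}\le m^{-1}$, hence the same Kantorovich constant $k$) yields $A[n,t](A_1^{-1},\dots,A_n^{-1})\le k\,G[n,t](A_1^{-1},\dots,A_n^{-1})$, and taking inverses together with the self-duality $G[n,t](A_1^{-1},\dots,A_n^{-1})=G[n,t](\mathbb{A})^{-1}$ (used already in the proof of Theorem \ref{th41}) gives $H[n,t](\mathbb{A})\ge k^{-1}G[n,t](\mathbb{A})$. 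Everything else is a routine application of Lemma \ref{lem51} and the Löwner--Heinz-type monotonicity already invoked throughout the paper.
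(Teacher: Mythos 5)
Your proof is correct, but it takes a different route from the paper's. The paper keeps the arithmetic mean $A[n,t]$ as the pivot in the \emph{squared} world: for the upper bound it applies Lemma \ref{lem51} to $G_K(\hat\omega;\mathbb{A})\le A[n,t](\mathbb{A})$ to get $G_K^2\le k\,A[n,t]^2$ and then invokes the squared reverse AM--GM inequality \eqref{cor411}, $A[n,t]^2\le k^2 G[n,t]^2$; for the lower bound it applies Lemma \ref{lem51} to $G[n,t]\le A[n,t]$ and then the squared reverse arithmetic--Karcher inequality \eqref{rem351} with $p=2$ and $\Phi=\mathrm{id}$. You instead first establish the \emph{linear} two-sided bound $k^{-1}G[n,t]\le G_K(\hat\omega;\cdot)\le k\,G[n,t]$ — the upper half via $G_K\le A[n,t]\le k\,G[n,t]$, the lower half via $G_K\ge H[n,t]\ge k^{-1}G[n,t]$, where the harmonic-side reverse is obtained by applying \eqref{th211} to $(A_1^{-1},\dots,A_n^{-1})$ and using self-duality — and only then square once with Lemma \ref{lem51}, landing on the same constant $k^3$. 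Your version is more self-contained: it bypasses the operator-norm machinery (Lemmas \ref{lema1}--\ref{lema2}) behind \eqref{cor411} and \eqref{rem351}, needing only Theorem \ref{th21}, the AGH/AKH sandwiches with the common weight $\hat\omega$, self-duality of $G[n,t]$, and Lemma \ref{lem51}; the price is the extra verification of the harmonic-side dual of Theorem \ref{th21}, which you carry out correctly (the Kantorovich constant is invariant under $m,M\mapsto M^{-1},m^{-1}$). The paper's version, by contrast, reuses results it has already proved, and the intermediate linear estimate it produces along the way is weaker than yours. Both arguments are sound and yield identical constants.
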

\begin{proof}
The first inequality in \eqref{th511} follows from Lemma \ref{lem51}, the arithmetic-geometric mean inequality \eqref{agh} and \eqref{rem351} with $p=2$ and an identity map $\Phi$ that
\begin{align*}
G[n,t](A_1,\cdots, A_n)^2&\le \frac{(m+M)^{2}}{4mM}A[n,t](A_1,\cdots, A_n)^2\\
&\le\frac{(m+M)^{2}}{4mM}\left(\frac{(m+M)^{2}}{4mM}\right)^2 G_K(\hat{\omega}, A_1,\cdots, A_n)^2.
\end{align*}
The second inequality in \eqref{th511} follows from Lemma \ref{lem51} and \eqref{cor411} that
\begin{align*}
G_K(\hat{\omega}, A_1,\cdots, A_n)^2&\le \frac{(m+M)^{2}}{4mM}A[n,t](A_1,\cdots, A_n)^2\\
&\le\frac{(m+M)^{2}}{4mM}\left(\frac{(m+M)^{2}}{4mM}\right)^2 G[n,t]A_1,\cdots, A_n)^2.\qedhere
\end{align*}
\end{proof}

\begin{theorem}
For any positive integer $n\ge 2$, let $A_1, A_2, \cdots, A_n$ be positive
invertible operators on a Hilbert space $\mathcal{H}$ such that $m\le A_i\le M$ for $i=1,2,\cdots,n$
and some scalars $0<m<M$. Then
\begin{equation}
\label{th521}
G_K(\hat{\omega},A_1,\cdots, A_n)^p\le \mathrm{K}(m,M,p)\frac{(M+m)^{2p}}{4^pM^pm^p}G[n,t](A_1,\cdots, A_n)^p ~for ~all~ 1\le p\le 2
\end{equation}
and
\begin{equation}
\label{th522}
G_K(\hat{\omega},A_1,\cdots, A_n)^p\le \mathrm{K}(m,M,p)\frac{(M+m)^{2p}}{16M^pm^p}G[n,t](A_1,\cdots, A_n)^p ~for~ all~ p\ge 2.~~~~
\end{equation}
\end{theorem}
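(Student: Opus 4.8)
The plan is to derive both inequalities from a single chain of implications: first transfer the operator inequality $G_K(\hat\omega;\mathbb{A})\le A[n,t](A_1,\cdots,A_n)$ to $p$-th powers by means of the generalized Kantorovich constant, and then feed in the reverse arithmetic--geometric estimates for $A[n,t]$ versus $G[n,t]$ already established above.

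First I would record two elementary facts about the weighted Karcher mean with the Lawson--Lim weight $\hat\omega=(t[n]_1,\cdots,t[n]_n)\in\Delta_n$. Applying the AKH weighted mean inequalities \eqref{akh} with $\omega=\hat\omega$ gives $G_K(\hat\omega;\mathbb{A})\le\sum_{i=1}^n t[n]_iA_i=A[n,t](A_1,\cdots,A_n)$, and, since $m\le A_i\le M$, the same inequalities yield $m\le\bigl(\sum_{i=1}^n t[n]_iA_i^{-1}\bigr)^{-1}\le G_K(\hat\omega;\mathbb{A})\le M$. These are precisely the hypotheses of Lemma \ref{lem52}, which I then apply with $A=G_K(\hat\omega;\mathbb{A})$ and $B=A[n,t](A_1,\cdots,A_n)$ to obtain
\[
G_K(\hat\omega;\mathbb{A})^p\le \mathrm{K}(m,M,p)\,A[n,t](A_1,\cdots,A_n)^p\qquad\text{for all }p\ge1.
\]

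Next I would insert the reverse arithmetic--geometric bounds. For $p\ge2$, Theorem \ref{th41} gives $A[n,t](A_1,\cdots,A_n)^p\le\frac{(m+M)^{2p}}{16m^pM^p}G[n,t](A_1,\cdots,A_n)^p$ directly, and substituting this into the displayed inequality produces \eqref{th522}. For $1\le p\le2$, I would start from Corollary \ref{cor41}, namely $A[n,t](A_1,\cdots,A_n)^2\le\frac{(m+M)^4}{16m^2M^2}G[n,t](A_1,\cdots,A_n)^2$, and raise both sides to the power $p/2\le1$; the L\"ownner--Heinz inequality then yields $A[n,t](A_1,\cdots,A_n)^p\le\bigl(\frac{(m+M)^2}{4mM}\bigr)^pG[n,t](A_1,\cdots,A_n)^p$, and combining with the Furuta step gives \eqref{th521}.

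I do not expect a genuine obstacle here; the only points requiring care are (i) verifying the spectral bounds $m\le G_K(\hat\omega;\mathbb{A})\le M$ so that Lemma \ref{lem52} applies with the Kantorovich constant $\mathrm{K}(m,M,p)$ attached to the interval $[m,M]$, and (ii) checking that $p/2\le1$ throughout the range $1\le p\le2$, so that L\"owner--Heinz may be invoked to pass from squares to $p$-th powers. Both are immediate from \eqref{akh} and the hypothesis $m\le A_i\le M$. One could further remark that running the same argument with Theorem \ref{th41} replaced by the sharper Theorem \ref{th42} and its corollary would give improved reverse bounds under the extra restriction $M/m\le 2+\sqrt{3}$.
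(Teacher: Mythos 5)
Your proof is correct and follows essentially the same route as the paper: Lemma \ref{lem52} is used to pass from $G_K(\hat\omega;\mathbb{A})\le A[n,t](A_1,\cdots,A_n)$ to $p$-th powers with the constant $\mathrm{K}(m,M,p)$, and this is combined with the reverse arithmetic--geometric bounds \eqref{th411} for $p\ge2$ and \eqref{cor411} plus the L\"owner--Heinz inequality for $1\le p\le2$. Your explicit verification that $m\le G_K(\hat\omega;\mathbb{A})\le M$ and $G_K(\hat\omega;\mathbb{A})\le A[n,t](A_1,\cdots,A_n)$ via \eqref{akh}, so that Lemma \ref{lem52} genuinely applies, is a detail the paper leaves implicit.
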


\begin{proof}
By the L\"{o}ewner-Heinz inequality and \eqref{cor411}, we have
\begin{equation}
\label{th523}
A[n,t](A_1,\cdots, A_n)^p\le\frac{(M+m)^{2p}}{4^pM^pm^p}G[n,t](A_1,\cdots, A_n)^p ~for ~all~ 1\le p\le 2.
\end{equation}
It follows from Lemma \ref{lem52} and \eqref{th523} that
\begin{align*}
G_K(\hat{\omega},A_1,\cdots, A_n)^p&\le \mathrm{K}(m,M,p)A[n,t](A_1,\cdots, A_n)^p\\
&\le\mathrm{K}(m,M,p)\frac{(M+m)^{2p}}{4^pM^pm^p}G[n,t](A_1,\cdots, A_n)^p.
\end{align*}
The inequality \eqref{th522} follows from Lemma \ref{lem52} and \eqref{th411} that
\begin{align*}
G_K(\hat{\omega},A_1,\cdots, A_n)^p&\le \mathrm{K}(m,M,p)A[n,t](A_1,\cdots, A_n)^p\\
&\le\mathrm{K}(m,M,p)\frac{(M+m)^{2p}}{16M^pm^p}G[n,t](A_1,\cdots, A_n)^p.\qedhere
\end{align*}
\end{proof}

\begin{remark}
When $p=2$, the inequality \eqref{th521} is equivalent to the second one in \eqref{th511}. So
\eqref{th521} generalize the second inequality in \eqref{th511}.
When $p=1, t=\frac{1}{2}$, the inequality \eqref{th521} reduces to the first one in \eqref{th231}.
\end{remark}
Next, we make a comparison of the weighted Karcher mean and the Lawson-Lim geometric mean for unitarily invariant norm by virtue of the following lemma.
\begin{lemma}\label{lem53}\cite[Lemma 3.2]{Fujii1}
Let $A$ and $B$ be positive invertible operators on a Hilbert space $\mathcal{H}$. If $A\le B$,
then there exists a unitary operator $U$ such that $A^p\le UB^pU^*$ for all $p>0$.
\end{lemma}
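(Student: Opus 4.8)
The plan is to reduce the operator inequality to a purely spectral comparison and then to produce the domination by a single unitary that matches up the spectral resolutions of $A$ and $B$. First I would invoke Weyl's monotonicity principle: since $A\le B$ with $A,B\ge 0$, the eigenvalues arranged in decreasing order (repeated according to multiplicity) satisfy $\lambda_k(A)\le\lambda_k(B)$ for every $k$. Because $x\mapsto x^p$ is increasing on $(0,\infty)$ for each $p>0$, this upgrades at once to $\lambda_k(A)^p\le\lambda_k(B)^p$ for all $k$ and all $p>0$.

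Next I would fix orthonormal eigenbases $\{u_k\}$ of $A$ and $\{v_k\}$ of $B$, ordered compatibly with the decreasing eigenvalues, so that $A=\sum_k\lambda_k(A)\,u_ku_k^{*}$ and $B=\sum_k\lambda_k(B)\,v_kv_k^{*}$. Since taking powers does not alter the eigenprojections, $A^p=\sum_k\lambda_k(A)^p\,u_ku_k^{*}$ and $B^p=\sum_k\lambda_k(B)^p\,v_kv_k^{*}$ for every $p>0$. Setting $U:=\sum_k u_kv_k^{*}$, one checks that $U$ is unitary and --- this is the whole point --- independent of $p$. Then $UB^pU^{*}=\sum_k\lambda_k(B)^p\,u_ku_k^{*}$, so $UB^pU^{*}-A^p=\sum_k\bigl(\lambda_k(B)^p-\lambda_k(A)^p\bigr)u_ku_k^{*}\ge 0$ by the first step, which is exactly $A^p\le UB^pU^{*}$ for all $p>0$ with the same $U$.

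The hard part is not any single computation but the uniformity in $p$: one unitary must serve simultaneously for every exponent. This comes for free from the observation that $A$ and $A^p$ have the same spectral projections (likewise $B$ and $B^p$), so a unitary carrying the spectral resolution of $B$ onto that of $A$ automatically carries that of $B^p$ onto that of $A^p$. The only delicate point is to make the argument go through on a genuinely infinite-dimensional $\mathcal{H}$, where the spectrum may be continuous and the bookkeeping by eigenvalues is unavailable; there one replaces it by a comparison of the spectral scales (nonincreasing rearrangements) of $A$ and $B$ and builds the intertwining unitary from the spectral theorem in direct-integral form, or --- since the applications of the lemma in this paper concern the finite-dimensional setting --- one simply works with matrices. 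I expect this to be the only place requiring extra care.
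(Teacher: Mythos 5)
The paper does not actually prove this lemma---it is imported verbatim from \cite[Lemma 3.2]{Fujii1}---and your argument (Weyl monotonicity of the ordered eigenvalues, followed by a unitary matching of the two spectral resolutions, with a single $U$ serving every exponent because the eigenprojections of $A^p$ and $B^p$ do not depend on $p$) is exactly the standard proof given in that reference; it is complete and correct in the matrix setting. Your closing caveat is also well placed: for genuinely infinite-dimensional $\mathcal{H}$ the eigenvalue bookkeeping is unavailable and your sketched direct-integral substitute is not a proof, but that gap stems from the paper's choice to state a matrix lemma for arbitrary positive invertible Hilbert-space operators, not from any defect in your argument.
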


\begin{theorem}
For any positive integer $n\ge 2$, let $A_1, A_2, \cdots, A_n$ be positive invertible operators on a Hilbert space $\mathcal{H}$  such that $m\le A_i\le M$ for $i=1,2,\cdots,n$
and some scalars $0<m<M$. Then
\[|||G_K(\hat{\omega}, A_1,\cdots, A_n)^p|||\le \mathrm{K}(m,M,p)\frac{(M+m)^{2p}}{4^pM^pm^p}|||G[n,t](A_1,\cdots, A_n)^p|||\]
for $p\ge 1$ and every unitarily invariant norm $|||\cdot|||$.
\end{theorem}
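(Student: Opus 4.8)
The plan is to sandwich the Karcher mean $G_K(\hat\omega,\mathbb{A})$ between the scalar $m$ and the Lawson--Lim arithmetic mean $A[n,t](\mathbb{A})$, to promote this to an inequality of $p$-th powers via the generalized Kantorovich constant, and finally to trade $A[n,t](\mathbb{A})$ for $G[n,t](\mathbb{A})$ using the reverse arithmetic--geometric mean inequality \eqref{th211} together with the unitary-conjugation device of Lemma \ref{lem53}.

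First I would specialize the AKH weighted mean inequalities \eqref{akh} to the weight vector $\hat\omega=(t[n]_1,\dots,t[n]_n)$, which gives
\[
\left(\sum_{i=1}^{n} t[n]_i A_i^{-1}\right)^{-1}\le G_K(\hat\omega,A_1,\dots,A_n)\le \sum_{i=1}^{n} t[n]_i A_i=A[n,t](A_1,\dots,A_n).
\]
Since $m\le A_i\le M$ for every $i$, both outer terms lie in $[m,M]$; in particular $m\le G_K(\hat\omega,\mathbb{A})\le M$ and $G_K(\hat\omega,\mathbb{A})\le A[n,t](\mathbb{A})$. Next I would apply Lemma \ref{lem52} with $A=G_K(\hat\omega,\mathbb{A})$ and $B=A[n,t](\mathbb{A})$, the hypotheses $B\ge A>0$ and $m\le A\le M$ being exactly what was just established; this gives, for every $p\ge 1$,
\[
G_K(\hat\omega,A_1,\dots,A_n)^p\le \mathrm{K}(m,M,p)\,A[n,t](A_1,\dots,A_n)^p,
\]
and since every unitarily invariant norm is monotone on the positive cone we obtain $|||G_K(\hat\omega,\mathbb{A})^p|||\le \mathrm{K}(m,M,p)\,|||A[n,t](\mathbb{A})^p|||$.

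It then remains to bound $|||A[n,t](\mathbb{A})^p|||$ by $|||G[n,t](\mathbb{A})^p|||$. The reverse arithmetic--geometric mean inequality \eqref{th211} gives the operator inequality $A[n,t](\mathbb{A})\le \frac{(m+M)^2}{4mM}G[n,t](\mathbb{A})$; as $x\mapsto x^p$ is not operator monotone for $p>1$ this cannot simply be raised to the $p$-th power, so I would instead invoke Lemma \ref{lem53} to produce a unitary operator $U$ with
\[
A[n,t](A_1,\dots,A_n)^p\le \frac{(m+M)^{2p}}{4^p m^p M^p}\,U\,G[n,t](A_1,\dots,A_n)^p\,U^*.
\]
Monotonicity and unitary invariance of $|||\cdot|||$ then yield $|||A[n,t](\mathbb{A})^p|||\le \frac{(m+M)^{2p}}{4^p m^p M^p}|||G[n,t](\mathbb{A})^p|||$, and combining with the earlier estimate gives exactly the asserted inequality.

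The one genuinely delicate point is the behaviour at $p>1$: an operator inequality $X\le Y$ does not survive raising both sides to a power larger than $1$, so the step $G_K^p\le \mathrm{K}(m,M,p)A[n,t]^p$ and the passage from $A[n,t]$ to $G[n,t]$ must be routed through the generalized Kantorovich constant (Lemma \ref{lem52}) and the unitary-conjugation lemma (Lemma \ref{lem53}) rather than through L\"owner--Heinz. Everything else is routine bookkeeping with the monotonicity and unitary invariance of $|||\cdot|||$.
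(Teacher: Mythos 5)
Your proposal is correct and follows essentially the same route as the paper: bound $G_K(\hat\omega;\mathbb{A})$ by $A[n,t](\mathbb{A})$ via the AKH inequality, pass to $p$-th powers with Lemma \ref{lem52}, and convert $A[n,t]$ into $G[n,t]$ through Theorem \ref{th21} combined with the unitary-conjugation Lemma \ref{lem53} before taking unitarily invariant norms. You in fact spell out more carefully than the paper does why the hypotheses of Lemma \ref{lem52} (namely $m\le G_K(\hat\omega;\mathbb{A})\le M$ and $G_K(\hat\omega;\mathbb{A})\le A[n,t](\mathbb{A})$) are satisfied.
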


\begin{proof}
By Theorem \ref{th21} and Lemma \ref{lem53}, there exists a unitary operator $U$ such that
\[A[n,t](A_1,\cdots, A_n)^p\le\left(\frac{(m+M)^{2}}{4mM}\right)^pUG[n,t](A_1,\cdots, A_n)^pU^*.\]
Combining Lemma \ref{lem52} with the above inequality, we have
\begin{align*}
G_K(\hat{\omega},A_1,\cdots, A_n)^p&\le \mathrm{K}(m,M,p)A[n,t](A_1,\cdots, A_n)^p\\
&\le\mathrm{K}(m,M,p)\left(\frac{(m+M)^{2}}{4mM}\right)^pUG[n,t](A_1,\cdots, A_n)^pU^*.\qedhere
\end{align*}
\end{proof}


\bibliographystyle{amsplain}

\end{document}